\newcommand{\Eq}[1]{(\ref{eq:#1})}
\newcommand{\Th}[1]{Thm.~\ref{thm:#1}}
\newcommand{\Cor}[1]{Cor.~\ref{cor:#1}}
\newcommand{\Lem}[1]{Lem.~\ref{lem:#1}}
\newcommand{\Rem}[1]{Rem.~\ref{rem:#1}}
\newcommand{\Sec}[1]{\S \ref{sec:#1}}
\newcommand{\Fig}[1]{Fig.~\ref{fig:#1}}
\newcommand{\App}[1]{App.~\ref{app:#1}}
\newcommand{\Ex}[1]{Ex.~\ref{ex:#1}}
\newcommand{\Ass}[1]{Asm.~\ref{ass:#1}}
\newcommand{\InsertFig}[4]{
\begin{figure}[ht]
 \centerline{
 \includegraphics[width=#4]{./figs/#1}
 }
 \caption{{\footnotesize #2}
 \label{fig:#3}}
\end{figure}}
\newcommand{\InsertFigTwo}[5]{
\begin{figure}[ht]
 \centerline{
 \includegraphics[width=#5]{./figs/#1}
 \hskip 0.5in
 \includegraphics[width=#5]{./figs/#2}
 }
 \caption{{\footnotesize #3}
 \label{fig:#4}}
\end{figure}}
\newcommand{\bA}{{\mathbb{ A}}}
\newcommand{\bD}{{\mathbb{ D}}}
\newcommand{\bN}{{\mathbb{ N}}}
\newcommand{\bP}{{\mathbb{ P}}}
\newcommand{\bQ}{{\mathbb{ Q}}}
\newcommand{\bR}{{\mathbb{ R}}}
\newcommand{\bS}{{\mathbb{ S}}}
\newcommand{\bZ}{{\mathbb{ Z}}}
\newcommand{\cC}{\mathcal{C}}
\newcommand{\cE}{\mathcal{E}}
\newcommand{\cM}{\mathcal{M}}
\newcommand{\cS}{\mathcal{S}}
\newcommand{\cU}{\mathcal{U}}
\newcommand{\cV}{\mathcal{V}}
\newcommand{\cW}{\mathcal{W}}
\newcommand{\bu}{{\bf u}}
\newcommand{\bw}{{\bf w}}
\newcommand{\fg}{\mathfrak{g}}
\newcommand{\eps}{\varepsilon}
\newcommand{\vphi}{\varphi}
\newcommand{\am}{{\rm am}}
\newcommand{\sn}{{\rm {sn}}}
\newcommand{\cn}{{\rm cn}}
\newcommand{\dn}{{\rm dn}}
\newcommand{\cd}{{\rm cd}}
\newcommand{\interior}{{\rm Int}}
\newcommand{\diag}{{\rm diag}}
\newcommand{\cay}{{\rm Cay}}
\newcommand{\poris}[1]{\Delta_{#1}}
\newcommand{\cCi}{{\mathcal{C}}_i}
\newcommand{\cCo}{{\mathcal{C}}_o}
\newcommand{\pInc}{q_e}
\def\id{{\rm id}}
\def\gener{S}
\def\bnu{{\boldsymbol{\nu}}}
\def\bmu{{\boldsymbol{\mu}}}
\def\rhoin{\hat{\rho}}
\def\omegain{\hat{\omega}}
\newtheorem{thm}{Theorem} %[section]
\newtheorem{lem}[thm]{Lemma}
\newtheorem{cor}[thm]{Corollary}
\newtheorem{defn}{Definition}
\theoremstyle{definition}
\newtheorem{remark}{Remark}%[section]		
\newtheorem{example}[remark]{Example}%[section]
\newcommand{\bexam}[1][:]{\begin{example}[#1]}
\newcommand{\eexam}{\end{example}}
\newcommand{\beq}[1]{\begin{equation}\label{eq:#1}}
\newcommand{\eeq}{\end{equation}}
\newenvironment{se}[1]{\equation\label{eq:#1}\aligned}{\endaligned\endequation}
\newcommand{\bsplit}[1]{\begin{se}{#1}}
\newcommand{\esplit}{\end{se}}
\title{Symmetry Reduction and Rotation Numbers for Poncelet maps}
\author{
 H.E. Lomel\'{\i}$^{1}$ and J.~D.~Meiss$^{2}$\thanks
 {
 JDM was supported in part by NSF grants DMS-1812481 and CMMI-1537460.
 }
\smallskip\\
$^{1}$Department of Mathematics \\
 University of Texas at Austin\\
Austin, TX 78712 \\
	 {\tt lomeli@math.utexas.edu }
\smallskip\\
$^{2}$Department of Applied Mathematics\\
 University of Colorado \\
 Boulder, CO 80309-0526, USA\\
 {\tt James.Meiss@colorado.edu}
}
\date{\today}
\begin{document}
\maketitle

\begin{abstract}
%\noindent
%\ams{34C37, 37C29, 37J45, 70H09}
%\pacs{02.40.-k, 05.45.-a, 45.20.Jj}
\vspace*{1ex}
\noindent
% Keywords:

Poncelet maps are circle maps constructed geometrically for a pair of nested ellipses;
they are related to the classic billiard map on an elliptical domain when the orbit has an
elliptical caustic. Here we show how the rotation number of the elliptical billiard map
can be obtained from a symmetry generated from the flow of a pendulum Hamiltonian system.
When such a symmetry flow has a global cross section, we previously showed that
there are coordinates in which the map takes a reduced, skew-product form on a covering space.
In particular, for elliptic billiard map this gives an explicit form for the rotation number
of each orbit.

We show that the family Poncelet maps on a pencil of ellipses is conjugate
to a corresponding family of billiard maps, and thus the Poncelet maps inherit
the one-parameter family of continuous symmetries.
Such a pencil has a single parameter, the \emph{pencil eccentricity}, which becomes
the modulus of the Jacobi elliptic functions used to construct a covering space that
simultaneously simplifies all of the Poncelet maps.
The rotation number of the Poncelet map for any element of a pencil can then be written
in terms of elliptic functions as well. An implication is that the rotation
number of the pencil has a monotonicity property: it is monotone increasing as the caustic ellipse shrinks.

The resulting expression for the rotation number gives an explicit condition for Poncelet porisms, the parameters for which
the rotation number is rational. For such parameters, an orbit of the corresponding Poncelet map is periodic: it
forms a polygon for \textit{any} initial point. These universal parameters also solve the inverse problem: given a
rotation number, which member of a pencil has a Poncelet map with that rotation number?
Explicit conditions are given for a general rotation numbers and we see how they are
related to Cayley's classic porism theorem.

%Some examples are given to show the application of the theory to concrete situations. In particular we show how
%classic theorems of for Porisms by Cayley are reproduced
% in Apollonian circles an bicentric polynomials are a particular example of out theory.
%
%The full Poncelet theorem is shown using this constructed covering space. The composition of
%two Poncelet maps then can be studied in terms of the lifts. Each lift is simple enough to show all the structure of the
%dynamics as a composition of rigid rotations.
%
%As a second part, we describe the consequences of having an irrational rotation number when
%the curves involved in the Poncelet construction are no longer conics.
\end{abstract}

\tableofcontents

%%%%%%%%%%%%%%%%%%%%%%%%%%%%%%%%%%%%%%%%%%%%%%%%%%%%%%%%%%%%%
%%% Introduction
%%%%%%%%%%%%%%%%%%%%%%%%%%%%%%%%%%%%%%%%%%%%%%%%%%%%%%%%%%%%%
\section{Introduction}\label{sec:Introduction}

Poncelet's celebrated 1822 theorem
% PONCELET J.-V. [1822],Traite des proprietes projectives des figures, Paris 1822 (second edition, two volumes, Paris 1865-1866).
\cite{Bos87,Fuchs07,Dragovic11,Dragovic14}
%\comment{These are the basic refs from \cite{Mirman10}}
concerns polygons inscribed between a pair of ellipses: an ``outer'' ellipse, $\cCo$,
that encloses an ``inner" ellipse, $\cCi$. His theorem states that if there exists
an $n$-sided polygon with all vertices on $\cCo$ and all sides tangent to $\cCi$
then for each $z \in \cCo$ there is also another $n$-sided polygon with $z$ as a vertex that has the same properties.
Associated with the pair $(\cC_o,\cC_i)$ there is a circle diffeomorphism on $\cCo$ called the \emph{Poncelet map}
(or \emph{Poncelet traverse} \cite{Nohara12}).
Indeed, for \emph{any} pair of strictly convex curves $(\cC_o,\cC_i)$,
with $\cCi$ in the interior of $\cCo$, the homeomorphism
\beq{Pmap}
	P: \cCo \to \cCo,
\eeq
of the pair is defined so that for each $z_0\in\cCo$,
$z_1 = P(z_0)$ is the unique point on $\cCo$ such that the segment $\overrightarrow{z_0\,z_1\,}$
is tangent to $\cCi$ with positive orientation, i.e., so that $\cCi$ appears on the left when
moving from $z_0$ to $z_1$, see \Fig{PonceletMap}.
Clearly there also exists a second point $z_{-1}\in\cCo$
such that $\overrightarrow{z_{-1}\,z_0\,}$ is also tangent to $\cCi$;
this point corresponds to the inverse of the map: $z_{-1}=P^{-1}(z_0)$.

%%%%%%%
\InsertFig{PonceletMap}
{A Poncelet map $P: \cCo \to \cCo$ for a pair of nested, convex curves $\cCo$ and $\cCi$. }
{PonceletMap}{2.8 in}
%%%%%%

Given an outer ellipse, the problem of characterizing the interior ellipses for which Poncelet
polygons exist is called the problem of \emph{Poncelet's porism}.
The poristic case then corresponds to the existence of a periodic orbit of $P$. In its simplest version, Poncelet's
theorem says that when $\cCo$ and $\cCi$ are ellipses, the existence of one
period-$n$ orbit implies that every orbit is period-$n$, i.e., that $P^n=\mathrm{id}$ on $\cCo$: this map is idempotent.

When the curves are concentric circles, the proof of Poncelet's theorem is simple, and
the family of polygons are simply obtained by rigid rotation
about the common center of the circles, a symmetry of the circle.
The proof of the general case has many versions, see e.g.
\cite{Griffiths77, Griffiths78, Kolodziej85, Cieslak10, Cima10, Kim10, Burski12, Cieslak16}.
In this paper we avoid using algebraic geometry (see e.g., \cite{Flatto09, Dragovic11} for this approach)
and concentrate on applying dynamical systems techniques.

Our first goal is to relate Poncelet's theorem to more general symmetries of integrable dynamical systems.
We will argue that the theorem follows from the existence of an equivariant,
continuous symmetry of $P$. In particular, this symmetry acts on a period-$n$ orbit of $P$ to deform
it into a continuous family of orbits with the same period.
Using this, we will address the fundamental problem of finding the rotation number of the Poncelet map
generated by a pair of concentric ellipses.

Even when the Poncelet map is not poristic, its dynamics are conjugate to rigid rotation.
In this paper, we find an explicit correspondence between the elements of a pencil of conics and rotation numbers.
For a given pencil, we obtain a family of Poncelet maps that are uniquely identified by the rotation number.
This correspondence can be written in terms of Jacobi elliptic functions that have a common modulus.
The modulus is, in fact, an invariant of the pencil that we will call the pencil eccentricity.

%%%%%%%
%%%% Billiards
%%%%%%%
\subsection{Elliptic billiards}\label{sec:IntroBilliards}

We will build the symmetry of $P$ by conjugacy with the known symmetry of the elliptical billiard,
which we recall in \Sec{BilliardMaps}.
We start with the family of ellipses with eccentricities $0 < \eps < 1$ and foci at $(\pm1,0)$,
\beq{Ellipse}
 	E(\eps)=\left\{(x,y)\in\bR^2:\eps^2x^2+\left(\frac{\eps^2}{1-\eps^2}\right)y^2=1\right\}
	       = \left\{(x,y)\in\bR^2:\frac{x^2}{a^2}+\frac{y^2}{b^2}=1\right\},
\eeq
where $a = 1/\eps$ and $b = \sqrt{1-\eps^2}/\eps$ so that $0<b<a$ and $a^2-b^2=1$.
%Note that the parameter $\eps$ is chosen so that $E(\eps)$ has eccentricity $\eps$.
For a given eccentricity $\eps = f$, the billiard map, as studied by Birkhoff, is a symplectic twist map
defined on the cotangent bundle of $E(f)$, using the usual rule that the angle of incidence equals the angle of reflection.
In this case, the outer ellipse is $\cCo = E(f)$.
Orbits of the billiard map have caustics that are generically
confocal ellipses or hyperbolas, see e.g., \cite[Ch. 6]{Cornfeld82}. In \Sec{ConfocalPoncelet}
we recall the case that the caustic is an ellipse $\cCi = E(e)$ interior to $\cC_o$ with eccentricity $e$.
Note that generally,
\[
	E(e) \subset \interior(E(f)), \quad \mbox{whenever} \quad 0 < f < e < 1.
\]
This leads us to consider the triangle of eccentricity pairs
\beq{DeltaDefine}
	\Delta=\big\{(e,f)\in (0,1)\times(0,1): f<e\big\},
\eeq
as sketched in \Fig{ConstantRho}.
Whenever $(e,f) \in \Delta$, the orbits of the billiard map on $E(f)$ with caustic $E(e)$
induce a Poncelet map that we denote by
\[
	B_e^f : E(f) \to E(f).
\]

We showed in \cite{Dullin12} that equivariant symmetries of maps are related to the existence of covering
spaces when the symmetry flow has a global Poincar\'{e} section.
In this case, the map has a lift to the covering space that takes a skew-product form.
The billiard has an explicit symmetry that can be obtained from the flow of the pendulum,
and this provides explicit forms for the universal cover and the skew-product lift.

A consequence is that when the caustic is an ellipse, the billiard map $B_e^f$ restricted to each
symmetry orbit is conjugate to a rigid rotation with a rotation number that we denote
$\rhoin(e,f)$. In \Sec{ConfocalPoncelet},
we show that the symmetry implies that this rotation number can be expressed using elliptic functions:
%%%%%%
\begin{thm}[Billiard Rotation Number]\label{thm:BilliardRotationNumber}
For each $(e,f)\in\Delta$, the rotation number $\rhoin(e,f)$ of $B_e^f$ is
\beq{BilliardRotationNumber}
	\rhoin(e,f)= \frac{F(\omegain(e,f),e)}{2K(e)}, \quad
	\omegain(e,f)=\arcsin\sqrt{\frac{e^2- f^2}{e^2(1-f^2)}},
\eeq
where $F(\omega,e)$ is the incomplete elliptic integral of the first kind, \Eq{EllipticIntegral},
and $K(e) = F(\tfrac{\pi}{2},e)$.
\end{thm}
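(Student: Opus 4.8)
The plan is to extract the rotation number directly from the explicit symmetry of the billiard map $B_e^f$ that we constructed from the pendulum flow. The key point is that, restricted to a single orbit of the symmetry flow, $B_e^f$ is conjugate to a rigid rotation; so the rotation number is computed by expressing one application of $B_e^f$ as a time-advance along that flow, and then measuring the ``time'' in the natural coordinate. First I would recall from \Sec{BilliardMaps}--\Sec{ConfocalPoncelet} the pendulum Hamiltonian whose flow generates the symmetry, together with the covering map built from Jacobi elliptic functions $\sn(\cdot,e),\cn(\cdot,e),\dn(\cdot,e)$ with modulus equal to the caustic eccentricity $e$. On the covering space the lift of $B_e^f$ is a skew product: the base dynamics is a pure translation by a constant, and that constant, normalized by the period of the covering coordinate, is precisely $\rhoin(e,f)$.

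Next I would identify the two quantities in \Eq{BilliardRotationNumber}. The denominator $2K(e)$ is the period of the symmetry coordinate: since the elliptic functions of modulus $e$ have real period $4K(e)$ and the relevant angle variable runs over half of that (the pendulum libration/rotation half-period, or equivalently the $\sn$ having period $2K$ on the relevant branch), the natural normalization of the rotation number carries $2K(e)$ in the denominator. For the numerator, the shift induced by one step of $B_e^f$ along the symmetry orbit must be computed geometrically: a chord of $E(f)$ tangent to the caustic $E(e)$ corresponds, under the confocal/elliptic-coordinate parametrization, to a fixed increment of the elliptic argument. Writing the tangency condition in confocal coordinates, this increment works out to $F(\omega,e)$ for the angle $\omega$ determined by where the caustic sits relative to the billiard ellipse; solving the tangency relation for that angle gives $\sin^2\omega = (e^2-f^2)/(e^2(1-f^2))$, which is $\omegain(e,f)$. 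Assembling these, $\rhoin(e,f) = F(\omegain(e,f),e)/(2K(e))$.

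Concretely the steps are: (1) state the explicit covering map and skew-product lift of $B_e^f$ from the earlier sections; (2) parametrize $E(f)$ and the tangent-chord relation to $E(e)$ in confocal elliptic coordinates with modulus $e$; (3) show that one iterate of $B_e^f$ advances the covering coordinate by a constant, and compute that constant as an incomplete elliptic integral by reducing the chord-tangency equation to the standard Legendre form $\int d\theta/\sqrt{1-e^2\sin^2\theta}$; (4) identify the upper limit $\omegain(e,f)$ by solving the algebraic tangency condition, using $a^2 = 1/f^2$, $b^2 = (1-f^2)/f^2$ for $E(f)$ and the analogous relations for $E(e)$; (5) normalize by the full period $2K(e)$ of the covering coordinate to get the rotation number, and check the limiting behavior ($\omegain \to 0$ as $f \to e$, and $\omegain \to \tfrac{\pi}{2}$, hence $\rhoin \to \tfrac12$, as $f\to 0$) as a sanity check.

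The main obstacle I expect is step (3)--(4): verifying that the geometric tangency step is \emph{exactly} a constant translation in the elliptic coordinate (independence of the starting point is the whole content of the symmetry, so this must be leveraged rather than recomputed ad hoc) and then reducing the resulting integral to Legendre normal form with the correct modulus $e$ and the correct amplitude $\omegain(e,f)$. The bookkeeping that identifies the algebraic expression under the $\arcsin\sqrt{\cdot}$ with the right combination of $e$ and $f$ — as opposed to some confocal-parameter surrogate — is where sign conventions and the choice of branch of $\sn$ have to be handled carefully; everything else is a routine substitution once the symmetry is in hand.
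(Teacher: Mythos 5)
Your overall plan is the same as the paper's: use the equivariant pendulum symmetry to establish that the lift of $B_e^f$ on the elliptic-function covering space is a rigid translation, and then determine that constant translation by one geometric computation. But the step you flag as the ``main obstacle'' --- reducing the chord-tangency relation to Legendre normal form to extract $F(\omega,e)$ --- is not filled in, and the paper resolves it by a specific trick you do not mention: take the \emph{horizontal} chord symmetric about $\vphi=\tfrac{\pi}{2}$, i.e., $\varphi_0=\tfrac{\pi}{2}-\omega$, $\varphi_1=\tfrac{\pi}{2}+\omega$ with $r_0 = e^{-1}\sqrt{1-e^2\sin^2\omega}$. For this chord the momentum variable is unchanged, one checks directly from \Eq{kappaDefine} that $B|_{\Lambda(e)}(\varphi_0,r_0)=(\varphi_1,r_0)$, and the covering map \Eq{covering} sends $\theta_{0}=-F(\omega,e)/(4K(e))$ and $\theta_1=+F(\omega,e)/(4K(e))$ to these two points because the first component of $\pInc$ is $\am(4K(e)\theta,e)+\tfrac{\pi}{2}$, so $\am$ has to be inverted --- that is exactly where $F(\cdot,e)$ enters, with no separate Legendre reduction required. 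The factor $2K(e)$ then comes from $\theta_1-\theta_0 = 2\theta_1 = F(\omega,e)/(2K(e))$, not (as your explanation suggests) from ``the relevant angle variable running over half the period $4K(e)$.'' Once $\theta_1-\theta_0$ is computed, there is still a small argument to show the rotation number equals $\theta_1-\theta_0$ rather than $\theta_1-\theta_0+m$ for some nonzero integer $m$; the paper pins $m=0$ by bounding $\theta_1-\theta_0 \in (0,\tfrac12)$ and using $\rhoin\in(0,1)$. Your proposal does not address this integer ambiguity. In short: right skeleton and right invariants, but the key simplifying choice of chord and the resolution of the deck-transformation ambiguity are missing, and those are precisely the nontrivial steps.
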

%%%%%%

\noindent
Versions of this standard result can also be found in \cite{Kolodziej85, Chang88, Ramirez14, Ramirez17, Kaloshin18}.

The connection between general Poncelet maps and elliptic billiards
allows us to find explicit expressions for the rotation number of the former as well.
While there has been much written about this connection, e.g.,
\cite{Chang88, Tabachnikov93, Dragovic98a, Dragovic06, Levi07, Dragovic10, Dragovic11, Garcia19},
as far as we know the equivariant billiard symmetry has not been previously used to analyze Poncelet maps.

%%%%%
\InsertFig{ConstantRhoBW}
{Level sets of the rotation number \Eq{BilliardRotationNumber} (curves) defined on the
set $\Delta$ \Eq{DeltaDefine} (gray).
The zero set of the Cayley polynomial, $\cay_{10}$ given by \Eq{Cayley10}, has four connected components, each corresponding
to a poristic curve $\Delta_\ell$ with $\rhoin(e,f) = \ell$ for $\ell=1/10$, $1/5$, $3/10$, and $2/5$.}
{ConstantRho}{3 in}
%%%%%

%%%%%%%
%%%% Pencils
%%%%%%%
\subsection{Pencils of ellipses}

When $(\cCo,\cCi)$ are a nested pair of ellipses, but are not confocal, the Poncelet map as sketched in \Fig{PonceletMap}
is not the billiard map on $\cCo$: the usual equality of the angle of reflection and the angle of
incidence is abandoned. Nevertheless, $P$ is still a circle homeomorphism on $\cCo$, and thus,
as shown by Poincar\'e, every orbit has a rotation number.
In \Sec{EllipticPencils} we recall that a general ellipse $\cC$ is associated with
a $3\times 3$ symmetric matrix $C$ using projective coordinates so that
\beq{QuadraticForm}
 \cC = \left\{(x,y) \in \bR^2: \bu^T C \bu = 0, \, \bu = (x,y,1)^T \right\}.
\eeq
The assumptions on the matrix $C$ so that the associated curve $\cC$ is an ellipse are given in \Sec{Standing}.

Given a fixed pair of nested ellipses $\cC_1, \cC_2$ for which $\cC_2 \subset \interior(\cC_1)$
and their associated matrices $C_1$, $C_2$,
a \emph{pencil} is the one-parameter family generated by linear combinations
of the implicit equations:
\beq{PencilSet}
	\cC_{\bnu} = \left\{ (x,y) \in \bR^2 : \bu^T ( \nu_1C_1 + \nu_2 C_2) \bu = 0 \,,\, \bu = (x,y,1)^T \right\} ,
\eeq
where the parameters $\bnu = (\nu_1, \nu_2)$ determine the element of the pencil.
%In general, we will use the notation $\bnu=(\nu_1, \nu_2)$.

We show in \Sec{Diagonalization} that under natural assumptions the associated matrices $C_1$ and $C_2$
can simultaneously diagonalized by a congruency,
and we can choose this transformation so that the outer ellipse becomes the unit circle in the new coordinates.
Many of the properties of the Poncelet map $P: \cC_1 \to \cC_1$ with inner caustic $\cC_2$
are determined by the eigenvalues $\lambda_1$, $\lambda_2$, and $\lambda_3$ of the matrix $C_1^{-1}C_2$. We
show that these can be ordered so that
\[
	\lambda_1 > \lambda_2 > \lambda_3 > 0.
\]
For such eigenvalues, $\cC_\bnu$ is an ellipse interior to $\cC_1$ and satisfies the
standing assumptions of \Sec{Standing} precisely when the vector $\bnu = (\nu_1,\nu_1)$ is in the cone
\begin{equation}\label{eq:nuRestrictions}
 	\nu_2>0 \mbox{ and } \nu_1+\lambda_3\nu_2>0,
\end{equation}
as illustrated below in \Fig{PencilRegion}.

Given pencil eigenvalues as above, we define in \Sec{PencilE} the \emph{pencil eccentricity} for \Eq{PencilSet} as
\beq{PencilEccentricity}
	e(C_1,C_2) \equiv \sqrt{\frac{\lambda_1-\lambda_2}{\lambda_1-\lambda_3}}.
\eeq
We show in \Lem{ModifiedEccentricity} that this eccentricity is a pencil invariant: for a fixed outer ellipse,
\Eq{PencilEccentricity} is the same for any inner ellipse $\cC_\bnu$: $e(C_1,C_2) = e(C_1,C_\bnu)$.

Moreover, we show in \Sec{Projective} that the diagonalization
transformation can be thought of as a projective diffeomorphism that maps the general pencil \Eq{PencilSet} into a \emph{standard} pencil, see \Sec{StandardPencil}, for which the outer curve becomes the unit circle.

%%%%%
\InsertFig{Conjugacy}{Conjugacy between the billiard circle map $B_e^f$, the standard Poncelet map $P_e^f$  \Eq{PefDefine} for which the outer curve is a circle, and a general Poncelet map $P_\bnu$. Here the eccentricities are
$e = \sqrt{\tfrac{27}{32}} \approx 0.918559$ and $f=\tfrac{\sqrt{3}}{2} \approx 0.866025$ and the eigenvalues
are $(\lambda_1,\lambda_2,\lambda_3)=\left(\tfrac{1}{5},\tfrac{1}{8},\tfrac{1}{9}\right)$.}
{Conjugacy}{7in}
%%%%%

Fixing parameters $\bnu=(\nu_1,\nu_2)$ as in \Eq{nuRestrictions}, we
denote the Poncelet map for the outer ellipse $\cC_o = \cC_1$ and inner caustic $\cC_i = \cC_\bnu$  \Eq{PencilSet} by
\beq{PencilPoncelet}
	P_\bnu: \cC_1 \to \cC_1 .
\eeq
If $e$ is the pencil eccentricity \Eq{PencilEccentricity}, we can
construct a conjugacy between $P_\bnu$ and the billiard map $B_e^f$ for a given $f$
that is a function only of $e$, the eigenvalues, and $\bnu$ (see \Eq{Pencilf}).
A sketch of the conjugacy, \Fig{Conjugacy}, uses the standard pencil as an intermediary.
In \Sec{PonceletMaps} we show that this connects the rotation number \Eq{BilliardRotationNumber} of the billiard
map $B_e^f$ to that of more general Poncelet maps, leading---in \Sec{Parameterization}---to the following theorem.
%%%%
\begin{thm}[Poncelet Rotation Number]\label{thm:PencilRotation}
If $\cC_\bnu$ is a pencil with eigenvalues $\lambda_1,\lambda_2,\lambda_3$ and eccentricity $e$ \Eq{PencilEccentricity},
then for each valid $\bnu$ \Eq{nuRestrictions} the rotation number of the corresponding Poncelet map \Eq{PencilPoncelet} is
\beq{PencilRotation}
	\rho(\bnu)= \frac{F(\omega(\bnu),e)}{2 K(e)},
	 \qquad
	\omega(\bnu)=\omega(\nu_1, \nu_2)
	 	=\arcsin\sqrt{\frac{\left(\lambda _1-\lambda _3\right) \nu _2}{\lambda _1 \nu _2+\nu _1}}.
\eeq
\end{thm}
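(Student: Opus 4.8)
The plan is to obtain \Eq{PencilRotation} from \Th{BilliardRotationNumber} by transporting the rotation number along the chain of conjugacies of \Fig{Conjugacy}, using that the rotation number of an orientation-preserving circle homeomorphism is a conjugacy invariant, together with the fact (established earlier) that each Poncelet map in the chain is conjugate to a rigid rotation, so its rotation number does not depend on the orbit.

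First I would make the diagonalization of \Sec{Diagonalization} explicit. Choose a congruency $S$ with $S^{T}C_1 S=\diag(1,1,-1)$ that simultaneously diagonalizes $C_2$; relabeling so that $\lambda_1>\lambda_2>\lambda_3>0$ we may take $S^{T}C_2 S=\diag(\lambda_1,\lambda_2,-\lambda_3)$, so that $C_1^{-1}C_2$ has the eigenvalues $\lambda_1>\lambda_2>\lambda_3>0$. In the new coordinates $\cC_1$ is the unit circle and the pencil element $\cC_\bnu$ has diagonal matrix $\diag(\nu_1+\lambda_1\nu_2,\,\nu_1+\lambda_2\nu_2,\,-(\nu_1+\lambda_3\nu_2))$, which is an ellipse interior to the circle precisely under \Eq{nuRestrictions}. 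This is the standard pencil of \Sec{StandardPencil}, and the projective diffeomorphism induced by $S$ preserves tangencies and can be taken orientation-preserving on $\cC_1$ (see \Sec{Projective}), so it conjugates $P_\bnu$ to the standard Poncelet map $P_e^f$, where $e$ is the pencil eccentricity \Eq{PencilEccentricity} — a pencil invariant by \Lem{ModifiedEccentricity} — and $f$ is read off from the diagonal entries as in \Eq{Pencilf}, namely
\[
 f^{2} \;=\; \frac{(\lambda_1-\lambda_2)\,(\nu_1+\lambda_3\nu_2)}{(\lambda_1-\lambda_3)\,(\nu_1+\lambda_2\nu_2)} \;=\; e^{2}\,\frac{\nu_1+\lambda_3\nu_2}{\nu_1+\lambda_2\nu_2}.
\]
Since $\lambda_3<\lambda_2$ and $\nu_2>0$ this gives $0<f<e<1$, so $(e,f)\in\Delta$ and \Th{BilliardRotationNumber} is applicable.

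Next I would invoke the confocal conjugacy of \Sec{ConfocalPoncelet}: the projective map sending the billiard table $E(f)$ to the unit circle carries the caustic $E(e)$ onto the standard-pencil ellipse above and conjugates the billiard-induced Poncelet map $B_e^f$ to $P_e^f$ — this is the remaining arrow of \Fig{Conjugacy}. Composing with the previous conjugacy, $P_\bnu$ is conjugate to $B_e^f$ by an orientation-preserving circle homeomorphism, so $\rho(\bnu)=\rhoin(e,f)$ with the $f$ above. It then remains to substitute into \Eq{BilliardRotationNumber}. Using $e^{2}=(\lambda_1-\lambda_2)/(\lambda_1-\lambda_3)$ and the formula for $f^{2}$, a direct computation gives
\[
 \sin^{2}\omegain(e,f) \;=\; \frac{e^{2}-f^{2}}{e^{2}(1-f^{2})} \;=\; \frac{(\lambda_1-\lambda_3)\,\nu_2}{\lambda_1\nu_2+\nu_1} \;=\; \sin^{2}\omega(\bnu),
\]
so $\omegain(e,f)=\omega(\bnu)$; since $e$ is unchanged, $K(e)$ and $F(\cdot,e)$ are as well, and \Eq{BilliardRotationNumber} becomes exactly \Eq{PencilRotation}.

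The genuine content, and where I expect the main obstacle, lies upstream of this section: establishing the two conjugacies — in particular identifying the image of $\cC_\bnu$ under diagonalization with a standard-pencil ellipse and deriving the closed form \Eq{Pencilf} for $f$ — and carrying out the eigenvalue sign and ordering analysis that makes $e$ and $\omega(\bnu)$ real and keeps $(e,f)$ inside $\Delta$. One must also track orientations along the whole chain, so that rotation numbers are transported faithfully rather than being replaced by their reflections $1-\rho$. Granting that machinery, the proof of this theorem reduces to the one-line algebraic identity $\omegain(e,f(\bnu))=\omega(\bnu)$ displayed above.
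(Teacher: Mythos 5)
Your proposal is correct and follows essentially the same path as the paper: diagonalize the pencil via a congruency (Lemma~\ref{lem:reduction}), pass through the standard pencil via the projective map $\widehat M$ and the linear map $T_f$ (Lemma~\ref{lem:CanonicalMaps} and Lemma~\ref{lem:PencilPoncelet}), and then substitute $f = f(\bnu)$ from \Eq{Pencilf} into \Eq{BilliardRotationNumber}. Your algebraic verification $\sin^2\omegain(e,f(\bnu)) = (\lambda_1-\lambda_3)\nu_2/(\lambda_1\nu_2+\nu_1)$ is exactly the computation the paper's one-paragraph proof delegates to the substitution step, and you correctly identify that the real work is in the upstream conjugacy machinery and orientation bookkeeping.
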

%%%%%%
\noindent
This relation allows the computation of the rotation number for a general Poncelet map for ellipses in a pencil
knowing only the eigenvalues of the matrix $C_1^{-1}C_2$. Note that the modulus of these elliptic functions
is fixed and equal to the pencil eccentricity and that $\rho_\bnu$ is a homogeneous function of the components of $\bnu$.

In \Sec{Monotonicity} we  prove that Poncelet maps and the rotation number \Eq{PencilRotation} satisfy a monotonicity property:
%%%%%
\begin{thm}[Monotonicity]\label{thm:Monotonicity}
Suppose $\cC_\bnu$ and $\cC_\bmu$ are ellipses in a pencil \Eq{PencilSet}
with parameters $\bnu$ and $\bmu$ satisfying \Eq{nuRestrictions} so that both are in the interior of an outer ellipse $\cC_1$.
If $\rho(\bmu)$ and $\rho(\bnu)$ are the rotation numbers \Eq{PencilRotation} of the corresponding
maps $P_\bmu$, and $P_\bnu$, then the following are equivalent:
\begin{enumerate}[nosep]
	\item $\cC_\bmu \subset \interior (\cC_\bnu)$,
	\item $\bnu \times \bmu > 0$,
	\item $\rho(\bmu) > \rho(\bnu)$.
\end{enumerate}
\end{thm}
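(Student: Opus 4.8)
The plan is to prove the cycle of equivalences by establishing $(1)\Leftrightarrow(2)$ and $(2)\Leftrightarrow(3)$ separately, using the scalar $\bnu\times\bmu=\nu_1\mu_2-\nu_2\mu_1$ as the common hinge; both reductions come down to elementary bilinear algebra once one works in the standard pencil of \Sec{StandardPencil}. Throughout, $(1)$, $(2)$, $(3)$ refer to the three displayed statements.

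For $(1)\Leftrightarrow(2)$ I would first pass to the standard pencil. By \Sec{Projective} the diagonalizing congruence is realized by a projective diffeomorphism carrying $\cC_1$ to the unit circle and $\interior(\cC_1)$ to the open disc; being a diffeomorphism on this region it preserves every containment relation among subsets of $\interior(\cC_1)$, and since it acts on the defining matrices by congruence it leaves the pencil parameters $\bnu$ unchanged. So one may assume $C_1$ is the unit circle matrix, normalized (as in \Sec{Standing}) so that $\bu^TC_1\bu<0$ on the disc. Writing $Q_\bnu(\bu)=\bu^T(\nu_1C_1+\nu_2C_2)\bu$ with $\bu=(x,y,1)^T$, the standing assumptions give $\interior(\cC_\bnu)=\{Q_\bnu<0\}$. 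The key point is the algebraic identity $\mu_2\,Q_\bnu-\nu_2\,Q_\bmu=(\bnu\times\bmu)\,\bu^TC_1\bu$, valid as quadratic forms. At a point of $\cC_\bmu$ we have $Q_\bmu=0$, and since $\cC_\bmu\subset\interior(\cC_1)$ we have $\bu^TC_1\bu<0$ there; using $\mu_2>0$ from \Eq{nuRestrictions}, this shows $Q_\bnu$ has the constant sign of $\bnu\times\bmu$ along $\cC_\bmu$. Hence $Q_\bnu<0$ on all of $\cC_\bmu$ precisely when $\bnu\times\bmu>0$; and because $\cC_\bmu$ is connected, ``$Q_\bnu<0$ on $\cC_\bmu$'' is the same as $\cC_\bmu\subset\interior(\cC_\bnu)$ (the cases $\bnu\times\bmu=0$ and $\bnu\times\bmu<0$ give $\cC_\bmu=\cC_\bnu$ and $\cC_\bmu$ in the exterior of $\cC_\bnu$, respectively). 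This yields $(1)\Leftrightarrow(2)$.

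For $(2)\Leftrightarrow(3)$ I would invoke \Th{PencilRotation}. The map $s\mapsto F(\arcsin\sqrt{s},e)$ is strictly increasing on $[0,1]$, since the integrand of $F$ is positive and $\arcsin\sqrt{\,\cdot\,}$ is increasing; therefore $\rho(\bnu)$ is a strictly increasing function of $g(\bnu):=\frac{(\lambda_1-\lambda_3)\nu_2}{\lambda_1\nu_2+\nu_1}\in(0,1)$. A one-line computation gives $g(\bmu)-g(\bnu)=(\lambda_1-\lambda_3)\,\dfrac{\bnu\times\bmu}{(\lambda_1\mu_2+\mu_1)(\lambda_1\nu_2+\nu_1)}$. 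Each denominator is positive, because $\lambda_1\nu_2+\nu_1=(\nu_1+\lambda_3\nu_2)+(\lambda_1-\lambda_3)\nu_2>0$ by \Eq{nuRestrictions} and $\lambda_1>\lambda_3$ (and similarly for $\bmu$), while $\lambda_1-\lambda_3>0$; hence $\sgn\big(g(\bmu)-g(\bnu)\big)=\sgn(\bnu\times\bmu)$, so $\rho(\bmu)>\rho(\bnu)\Leftrightarrow\bnu\times\bmu>0$, which is $(2)\Leftrightarrow(3)$.

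I do not expect a genuinely hard step: both equivalences are forced by the single scalar $\bnu\times\bmu$. The only place that demands care is the reduction to the standard pencil in $(1)\Leftrightarrow(2)$ — one must confirm that the map of \Sec{Projective} faithfully transports the containment $\cC_\bmu\subset\interior(\cC_\bnu)$ and leaves $\bnu,\bmu$ (hence $\bnu\times\bmu$) unchanged, and that the sign normalization of \Sec{Standing} really does place the interior of $\cC_\bnu$ on the side $Q_\bnu<0$; these are precisely the facts assembled in \Sec{Diagonalization} and \Sec{Projective}. As a more computational alternative for $(1)\Leftrightarrow(2)$, one can write $\cC_\bnu$ in the standard pencil as the axis-aligned ellipse with semi-axes $a_\bnu^2=\tfrac{\nu_1+\lambda_3\nu_2}{\nu_1+\lambda_1\nu_2}$ and $b_\bnu^2=\tfrac{\nu_1+\lambda_3\nu_2}{\nu_1+\lambda_2\nu_2}$, observe that for concentric coaxial ellipses containment is equivalent to both semi-axes of $\cC_\bmu$ being strictly smaller, and check that cross-multiplying each of the two inequalities (all denominators positive) collapses to $(\lambda_i-\lambda_3)(\bnu\times\bmu)>0$ for $i=1,2$; I would keep the intrinsic argument in the text and leave this version to a remark.
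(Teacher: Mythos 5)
Your proof is correct and follows essentially the same strategy as the paper's own argument, which is split between Lemma~\ref{lem:interior} and Lemma~\ref{lem:monorotation}: the equivalence $(1)\Leftrightarrow(2)$ via the bilinear identity
\[
\mu_2\,\bu^T C_\bnu \bu - \nu_2\,\bu^T C_\bmu \bu = (\nu_1\mu_2-\nu_2\mu_1)\,\bu^T C_1 \bu,
\]
and $(2)\Leftrightarrow(3)$ via monotonicity of $F(\arcsin\sqrt{\,\cdot\,},e)$ and the computation of $g(\bmu)-g(\bnu)$.

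Two small remarks. First, the reduction to the standard pencil in your $(1)\Leftrightarrow(2)$ step is extraneous: the bilinear identity and the fact that $\bu^T C_1\bu<0$ on $\cC_\bmu$ (guaranteed by Lemma~\ref{lem:restrict} applied to $\bmu$) hold in the original coordinates, so the paper argues directly without invoking $\widehat{M}$. Your reduction is not wrong, but it adds work the statement does not need, and one must then also justify (as you do, correctly) that $\widehat{M}$ preserves containment and leaves $\bnu$, $\bmu$ fixed. Second, there is a sign slip in the sentence ``this shows $Q_\bnu$ has the constant sign of $\bnu\times\bmu$ along $\cC_\bmu$'': since $\mu_2 Q_\bnu = (\bnu\times\bmu)\,\bu^T C_1\bu$ with $\mu_2>0$ and $\bu^T C_1\bu<0$, the sign of $Q_\bnu$ is the \emph{opposite} of that of $\bnu\times\bmu$. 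Your next sentence states the correct conclusion ($Q_\bnu<0$ on $\cC_\bmu$ precisely when $\bnu\times\bmu>0$), so the logic is sound, but the intermediate phrasing should be fixed. The computational alternative via semi-axes that you sketch at the end is also correct and would be a reasonable remark.
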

%%%%%%
\noindent
In particular, this shows that the rotation number is a monotonically increasing function as the inner caustic shrinks in the pencil.
In the limit when the inner ellipse becomes a point (e.g., $\nu_1+\lambda_3\nu_2 = 0$) and the rotation number becomes $\rho(\bnu) = \tfrac12$ as illustrated in \Fig{PencilRegion}.

%%%%%%%
%%%% Porisms
%%%%%%%
\subsection{Poncelet Porisms and Cayley conditions}\label{sec:IntroPorism}

As we recall in \Sec{Poristic}, the Poncelet porism problem for the confocal case is equivalent
to finding a pair of eccentricities  $(e,f)\in\Delta$ for which $\rhoin(e,f) \in \bQ$.
Using the form \Eq{BilliardRotationNumber}, we find in  \Sec{ExplicitPorisms} an explicit
parameterization of the curve $\Delta_\ell \subset \Delta$
of parameters for which $\rhoin(e,f) = \ell \in[0,\tfrac12]$,
rational or irrational; the latter correspond to  quasi-periodic orbits of $B_e^f$.
Several examples can be seen in \Fig{ConstantRho}.

Subsets of $\Delta$ for which the orbit of a Poncelet map has period $N$ were found by Cayley,
see e.g. \cite{Griffiths78, Dragovic98a, Flatto09, Mirman10, Mirman12, Nohara12, Cieslak10, Garcia21}.
% Mathematica reference: \cite{Weinstein22}
Though the Cayley conditions do not explicitly give the rotation number,
they are a useful first step in identifying poristic ellipses.
We argue below that the rotation number is a better tool.

For each $N>2$, Cayley found a polynomial, $\cay_N$,
with the property that the Poncelet map has a periodic orbit with period dividing $N$ if and only if $\cay_N =0$.
This method uses the expansion of the square root of the generalized characteristic polynomial,
\beq{CayleyExp}
	\sqrt{ \det(\lambda C_1 - C_2)} =  \alpha_0 + \alpha_1\lambda   + \alpha_2 \lambda^2 + \ldots ,
\eeq
where $C_1$ and $C_2$ are matrices representing the ellipses $\cC_1$ and $\cC_2$, respectively, and
we require that $\det(C_2)<0$.
For confocal ellipses, the coefficients $\alpha_j$ are
rational functions in the eccentricities $e$ and $f$ and the Cayley condition $\cay_N(\alpha_2,\ldots \alpha_{N-1}) = 0$
can be written as a polynomial in $(e,f)$.

Note that $\cay_N = 0$ implies that $\rhoin(e,f) = k/N$, for some $0<k<N/2$;
however, since the zero set necessarily includes each such rotation number
the equations become unmanageable very fast.
Finding the factors of the Cayley polynomial for each $\poris{k/N}$ is practical only for small values of $N$.
For example, for a pair of confocal ellipses the Cayley condition for $N=10$ has the form
\[\cay_{10}(\alpha_2,\ldots,\alpha_{9})=
\begin{pmatrix}
 \alpha_3 & \alpha_4 & \alpha_5 & \alpha_6 \\
 \alpha_4 & \alpha_5 & \alpha_6 & \alpha_7 \\
 \alpha_5 & \alpha_6 & \alpha_7 & \alpha_8 \\
 \alpha_6 & \alpha_7 & \alpha_8 & \alpha_9 \\
\end{pmatrix} = 0.
\]
The resulting set includes all the pairs $(e,f)\in\Delta$ for which the
billiard rotation number $\rhoin(e,f) = \tfrac{1}{10}$, $\tfrac15$, $\tfrac{3}{10}$, or $\tfrac25$, i.e.,
\beq{Cayley10}
(\cay_{10})^{-1}\{0\}\cap \Delta=
\Delta_{1/10}\cup\Delta_{1/5}\cup\Delta_{3/10}\cup\Delta_{2/5}.
\eeq
Thus the Cayley set splits into four components labelled by rotation number, as shown by the thicker curves in \Fig{ConstantRho}.
We recall several additional explicit examples in \Sec{PoristicParams} and \App{Porisms}.

By contrast, the explicit expression \Eq{BilliardRotationNumber} allows us to solve the inverse problem and directly
parameterize each set $\Delta_\ell$: given a rotation number $\ell$ and one of the eccentricities $e$ or $f$,
find the other eccentricity. In particular, we will show in \Lem{porism} that, for all $\ell\in(0,1/2)$, $\rhoin(e,f)=\ell$
if and only if $f=e\,\cd(2K(e)\ell,e)$, for the  Jacobi elliptic function $\cd$.
In \Sec{ExplicitRotation} we generalize this result to an arbitrary pencil \Eq{PencilSet}, giving
a complete solution of the inverse problem:
%%%%
\begin{thm}[Inverse parameter conditions]\label{thm:PencilParam}
The Poncelet map \Eq{PencilPoncelet} with pencil eccentricity \Eq{PencilEccentricity}
has rotation number $\rho(\bnu) = \ell \in (0,\tfrac12)$ if and only if $(\nu_1,\nu_2)$ lie on the ray
\beq{InverseRotationMap}
 	\frac{\nu_1}{\nu_2}= \frac{\lambda_1\cn^2(2K(e)\ell,e)-\lambda_3,}{\sn^2(2K(e)\ell,e)}
\eeq
in the cone of parameters \Eq{nuRestrictions}.
\end{thm}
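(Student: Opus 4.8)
\medskip

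\noindent
The plan is to invert the explicit rotation-number formula of \Th{PencilRotation}. Since $\rho(\bnu)$ in \Eq{PencilRotation} is homogeneous of degree zero in $\bnu=(\nu_1,\nu_2)$, it depends only on the ratio $\nu_1/\nu_2$, so it suffices to pin down that ratio. Write $u = 2K(e)\ell$; because $\ell\in(0,\tfrac12)$ we have $u\in(0,K(e))$. By \Eq{PencilRotation}, the condition $\rho(\bnu)=\ell$ is equivalent to $F(\omega(\bnu),e)=u$. Since $F(\cdot,e)$ is by definition the inverse of the Jacobi amplitude $\am(\cdot,e)$, this is in turn equivalent to $\omega(\bnu)=\am(u,e)$, hence to $\sin\omega(\bnu)=\sn(u,e)$.

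The next step is the algebra. Substituting the explicit expression for $\omega(\bnu)$ from \Eq{PencilRotation} and squaring, the condition $\sin\omega(\bnu)=\sn(u,e)$ becomes
\[
	\frac{(\lambda_1-\lambda_3)\nu_2}{\lambda_1\nu_2+\nu_1}=\sn^2(u,e).
\]
This is linear in $\nu_1/\nu_2$; solving it and using $1-\sn^2(u,e)=\cn^2(u,e)$ yields
\[
	\frac{\nu_1}{\nu_2}=\frac{\lambda_1\,\cn^2(u,e)-\lambda_3}{\sn^2(u,e)},
\]
which is exactly \Eq{InverseRotationMap}. To make the chain of equivalences tight I would record that, for $\bnu$ in the cone \Eq{nuRestrictions}, one has $\lambda_1\nu_2+\nu_1=(\lambda_1-\lambda_3)\nu_2+(\nu_1+\lambda_3\nu_2)>0$ while $(\lambda_1-\lambda_3)\nu_2>0$, and $(\lambda_1-\lambda_3)\nu_2<\lambda_1\nu_2+\nu_1$ is equivalent to $\nu_1+\lambda_3\nu_2>0$; hence the radicand of the $\arcsin$ in \Eq{PencilRotation} lies in $(0,1)$, so $\omega(\bnu)\in(0,\tfrac{\pi}{2})$. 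Likewise $\am(u,e)\in(0,\tfrac{\pi}{2})$ for $u\in(0,K(e))$, and $\sin$ is injective on $(0,\tfrac{\pi}{2})$, so $\sin\omega(\bnu)=\sn(u,e)$ is genuinely equivalent to $\omega(\bnu)=\am(u,e)$ — this is what upgrades the computation to an ``if and only if.''

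It remains to check that the ray \Eq{InverseRotationMap}, together with $\nu_2>0$, lies inside the cone \Eq{nuRestrictions}. Using \Eq{InverseRotationMap},
\[
	\nu_1+\lambda_3\nu_2=\nu_2\left(\frac{\lambda_1\cn^2(u,e)-\lambda_3}{\sn^2(u,e)}+\lambda_3\right)=\nu_2\,\frac{(\lambda_1-\lambda_3)\cn^2(u,e)}{\sn^2(u,e)},
\]
which has the same sign as $\nu_2$ since $\lambda_1>\lambda_3>0$ and $\cn(u,e),\sn(u,e)\neq0$ on $(0,K(e))$; hence $\nu_2>0\Rightarrow\nu_1+\lambda_3\nu_2>0$, so the whole ray is admissible. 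I expect the only delicate points to be bookkeeping: keeping the $\arcsin$ radicand in $[0,1]$ and being precise about the $F$--$\am$ inverse relationship and its domain; everything else is a one-line computation. As an independent cross-check, one can instead route through the confocal case: the conjugacy $P_\bnu\cong B_e^f$ with $f$ given by \Eq{Pencilf} reduces the claim to \Lem{porism} ($\rhoin(e,f)=\ell$ iff $f=e\,\cd(2K(e)\ell,e)$), and substituting the formula for $f$ should reproduce \Eq{InverseRotationMap} after simplification with $\cd=\cn/\dn$.
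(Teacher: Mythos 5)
Your proof is correct, and it differs in route from the paper's. The paper proves \Th{PencilParam} by routing back through the confocal case: by \Lem{PencilPoncelet}, $P_\bnu$ is conjugate to $B_e^f$ with $f$ as in \Eq{Pencilf}; then \Lem{porism} gives $\rhoin(e,f)=\ell \iff f = e\,\cd(2K(e)\ell,e)$, which, after substituting $f^2/e^2 = (\nu_1+\lambda_3\nu_2)/(\nu_1+\lambda_2\nu_2)$, becomes $\cd^2(u,e) = (\nu_1+\lambda_3\nu_2)/(\nu_1+\lambda_2\nu_2)$, and this is then solved for $\nu_1/\nu_2$ (a step that requires eliminating $\lambda_2$ via the eccentricity relation $\lambda_2 = \lambda_1 - e^2(\lambda_1-\lambda_3)$). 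You instead invert \Eq{PencilRotation} directly in the $\bnu$-parameterization — $\rho(\bnu)=\ell \iff F(\omega(\bnu),e)=2K(e)\ell \iff \sin\omega(\bnu)=\sn(u,e)$ — which leads straight to $(\lambda_1-\lambda_3)\nu_2/(\lambda_1\nu_2+\nu_1)=\sn^2(u,e)$ and needs only $1-\sn^2=\cn^2$ to finish. The two end formulas agree (one can check $(\nu_1+\lambda_3\nu_2)/(\nu_1+\lambda_2\nu_2)=\cn^2/\dn^2=\cd^2$ under your relation), so your argument is a modest shortcut: it avoids invoking \Lem{porism} as a black box and skips the $\cd$-to-$\cn,\sn$ conversion, at the cost of redoing in the $\bnu$-variables the same inversion that \Lem{porism} does in $(e,f)$. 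Your bookkeeping — verifying the $\arcsin$ radicand is in $(0,1)$ on the cone, noting $\am(u,e)\in(0,\tfrac{\pi}{2})$ so $\sin$ is injective, and checking $\nu_1+\lambda_3\nu_2 = \nu_2(\lambda_1-\lambda_3)\cn^2/\sn^2>0$ — matches what the paper records and is what upgrades the computation to a genuine equivalence. The cross-check you sketch at the end (substituting $f(\bnu)$ into \Lem{porism}) is exactly the paper's proof.
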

%%%%

\noindent
The remarkable feature of this result is that we can determine the element of the pencil
whose Poncelet map has a given rotation number using only the eigenvalues of $C_1^{-1}C_2$:
the explicit conjugacy to the billiard map is not needed.

%%%%%
%%%% Standard Covering
%%%%%
\subsection{Standard covering map and Full Poncelet Theorem}\label{sec:IntroStdCovering}

As another consequence of the results of \Sec{EllipticPencils}, we will find a covering
space that simultaneously simplifies all the Poncelet maps in \Sec{PonceletMaps}. Let
$\cC_\bnu$ be a pencil generated by two matrices $C_1,C_2$ that satisfy the
standing assumptions  \ref{ass:SA1}-\ref{ass:SA3}.
In \Sec{Parameterization}, we show that there exists a covering map
$\Pi_M:\bR\to\cC_1$ such that each Poncelet map $P_\bnu:\cC_1\to\cC_1$ has  a lift
of the form $B_\bnu(\theta)=\theta+\rho(\bnu)$, where $\rho(\bnu)$ is
the rotation number \Eq{PencilRotation}. This is shown in the following commutative diagram.
\[
	\xymatrix @R=3pc @C=3pc {
	{\bR\ }\ar[r]^-{B_\bnu} \ar[d]_{\Pi_M} &
 		\ \bR \ar[d]^{\Pi_M} \\
 		\cC_1\ \ar[r]^-{P_\bnu}& \ \cC_1\\ }
\]
We will see that the covering map $\Pi_M$ depends only on the eccentricity of the pencil and the matrix
that simultaneously diagonalizes $C_1$ and $C_2$.

In \Sec{GeneralPoncelet} we generalize these results to a Poncelet map \Eq{Pmap} with a sequence of inner ellipses
$\cC_\bnu$ in a given pencil. We will see that the corresponding Poncelet maps
commute. Using this, we obtain a more general version of
Poncelet's theorem with multiple inner ellipses.

%%%%%%
\begin{thm}\label{thm:GeneralPoncelet}
Suppose that $\bnu^1,\bnu^2,\ldots,\bnu^N$ is a finite sequence of parameters
satisfying \Eq{nuRestrictions}, and that
$\cC_{\bnu^1}, \ldots, \cC_{\bnu^N}$ are the corresponding
inner ellipses in the  pencil.
Then the circle map on  $\cC_1$ defined by the composition
\beq{Composition}
	P_* = (P_{\bnu^N})^{k_N}\circ (P_{\bnu^{N-1}})^{k_{N-1}}\circ \cdots \circ (P_{\bnu^1})^{k_1},
\eeq
for $k_1,k_2,\ldots,k_N\in\bZ$, has the rotation number
\[
	\rho(P_*)=k_1\,\rho(\bnu^1)+k_2\,\rho(\bnu^2)+\cdots+k_N\,\rho(\bnu^N),
\]
where $\rho(\bnu)$ is given by \Eq{PencilRotation}.
In particular, if the rotation number of \Eq{Composition} is an integer, $\rho(P_*)\in\bZ$, then $P_*=id$. Similarly, if $P_*$  has a fixed point then $\rho(P_*)\in\bZ$ and hence  $P_*=id$.
\end{thm}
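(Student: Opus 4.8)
The plan is to build on the skew-product structure developed in \Sec{PonceletMaps} and \Sec{Parameterization}. The key observation is that the covering map $\Pi_M:\bR\to\cC_1$ constructed in \Sec{Parameterization} depends only on the pencil eccentricity $e$ and the simultaneous diagonalizing matrix $M$ — \emph{not} on the particular parameter $\bnu$. Hence the \emph{same} covering map $\Pi_M$ lifts every Poncelet map $P_{\bnu^i}$ of the pencil to a rigid rotation $B_{\bnu^i}(\theta)=\theta+\rho(\bnu^i)$ on $\bR$. This is the conceptual heart of the argument: all the maps $P_{\bnu^i}$ are simultaneously conjugate (via one and the same semiconjugacy) to rotations, so on the covering space they genuinely commute as translations, and compositions simply add the rotation numbers.

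First I would record that, since each $P_{\bnu^i}$ lifts through $\Pi_M$ to the translation $\theta\mapsto\theta+\rho(\bnu^i)$, the composition $P_*$ in \Eq{Composition} lifts through $\Pi_M$ to the translation
\[
	\theta \longmapsto \theta + \bigl(k_1\rho(\bnu^1)+k_2\rho(\bnu^2)+\cdots+k_N\rho(\bnu^N)\bigr),
\]
because lifting a composition amounts to composing the lifts, and integer powers of a translation are translations by the multiplied amount. In particular the composition $P_*$ is again semiconjugate via $\Pi_M$ to a rigid rotation, so by the standard theory of circle homeomorphisms (Poincar\'e) its rotation number $\rho(P_*)$ is well defined and equals $k_1\rho(\bnu^1)+\cdots+k_N\rho(\bnu^N)$, with $\rho(\bnu^i)$ given by \Eq{PencilRotation}. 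Here I should note that $\Pi_M$ is $1$-periodic (up to the deck transformation of the cover), so the rotation number read off from the lift is well defined modulo $1$ and, for the purposes of the idempotency statement, it is the integer class that matters.

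Next I would deduce the idempotency statements. If $\rho(P_*)\in\bZ$, then the lift of $P_*$ is $\theta\mapsto\theta+m$ for some integer $m$, which is precisely the $m$-th iterate of the deck transformation; pushing down through $\Pi_M$ this gives $P_*=\id$ on $\cC_1$. For the converse direction of the last sentence: if $P_*$ has a fixed point $z\in\cC_1$, then a circle homeomorphism conjugate to a rigid rotation with a fixed point must have rotation number in $\bZ$ (a rotation by a non-integer amount is fixed-point free), hence $\rho(P_*)\in\bZ$, and the previous line gives $P_*=\id$. Since $P_*$ is genuinely conjugate (not merely semiconjugate) to a rotation — which follows from the results of \Sec{PonceletMaps} because each $P_\bnu$ is — this implication is clean; if one only had a semiconjugacy one would need to rule out the Denjoy phenomenon, but that does not arise here.

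The main obstacle is making precise that a \emph{single} covering map $\Pi_M$ works for \emph{all} the Poncelet maps of the pencil, and that composing the lifts corresponds to composing the maps. The first point is exactly the content of the commutative diagram stated just before \Th{GeneralPoncelet} together with the construction in \Sec{Parameterization}; I would cite it and emphasize that $\Pi_M$ is independent of $\bnu$. The second point — that lifts of commuting-through-$\Pi_M$ maps compose to a lift of the composition — requires only the functoriality of lifting, i.e., that $\Pi_M\circ (B_{\bnu^j}\circ B_{\bnu^i}) = (P_{\bnu^j}\circ P_{\bnu^i})\circ\Pi_M$, which is immediate by stacking the diagrams; and that a translation composed with a translation is a translation by the sum, which is trivial. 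So the proof is short once the skew-product/covering machinery of the earlier sections is in place; the work is entirely in invoking it correctly.
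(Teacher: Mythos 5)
Your proposal is correct and tracks the paper's own argument almost exactly: the paper proves the two-map version of the composition statement as Theorem~\ref{thm:Commuting}, using precisely the fact that the single covering map $\Pi_M$ lifts every $P_{\bnu}$ in the pencil to the translation $\theta\mapsto\theta+\rho(\bnu)$, and then notes that Theorem~\ref{thm:GeneralPoncelet} follows directly by iterating. Your additional observations — that $\Pi_M$ descends to a genuine conjugacy (not merely a semiconjugacy) between $P_*$ and a rigid rotation, so the fixed-point-implies-integer-rotation-number step is clean with no Denjoy concerns, and that an integer lift translation is exactly a deck transformation and hence projects to the identity — are the right ones and are implicit in the paper's brief remark.
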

%%%%%%

%Some references
%\begin{enumerate}
%	\item Aubry-Mather theory to find Poncelet Billiards in any family of curves \cite{Lopes09}
%	\item Higher dimensional versions \cite{Previato99, Griffiths77}
%	\item Higher dimensional elliptic Billiards \cite{Chang88, Chang93, Dragovic98b, Casas11}
%	\item On integrable systems \cite{Veselov91, Zakharov91} and integrable billiards \cite{Tabachnikov02}
%\end{enumerate}
%

%%%%%%%%%%%%%%%%%%%%%%%%%%%%%%%%%%%%%%%%%%%%%%%%%%%%%%%%%%%%%
%%% Billiards
%%%%%%%%%%%%%%%%%%%%%%%%%%%%%%%%%%%%%%%%%%%%%%%%%%%%%%%%%%%%%
\section{Confocal Ellipses and Billiard Maps}\label{sec:BilliardMaps}
We start with the simplest case: the Poncelet map $P$ for a pair of confocal ellipses.
In this case, we will see that $P$ corresponds to an orbit of the billiard map with boundary
$\cCo = E(f)$ that has the inner ellipse $\cCi = E(e)$ as a caustic with $0<f<e<1$.
We recall in \Sec{TwistMaps} the twist map formulation for billiard maps, and then,
in \Sec{EllipticBilliards}, specialize to the integrable, elliptic case.
The elliptic billiard has a symmetry that is a Hamiltonian flow and allows us to apply a theorem from \cite{Dullin12}.
This theorem shows how to lift a map with a symmetry to a covering space on which it has a semi-direct product form.
These results will be applied to the confocal Poncelet map in \Sec{ConfocalPoncelet}.

%%%%%%%%%%%%%%%%%%%%%%%%%%%%%%%%%%%%%%%%%%%%%%%%%%%%%%%%%%%%%
%%%% Twist Maps
%%%%%%%%%%%%%%%%%%%%%%%%%%%%%%%%%%%%%%%%%%%%%%%%%%%%%%%%%%%%%
\subsection{Billiards as Twist Maps}\label{sec:TwistMaps}

Classical billiard twist-map dynamics on smooth, convex curve $\cC$ gives rise to a twist map of an annulus.
To recall this, let $\gamma:\bR\to\bR^2$ be a $2\pi$-periodic,\footnote
%%%
{The twist map structure does not depend on having $2\pi$-periodicity, but we can assume
this without loss of generality. As an alternative one could use the arc length to parameterize the curve \cite{Meiss92a}.}
%%%
parametric representation of $\cC$:
\beq{ConvexCurve}
	\cC = \{\gamma(\vphi) \in\bR^2: \vphi \in \bR/2\pi\bZ\} .
\eeq
If $\gener(\vphi_0,\vphi_1)=\|\gamma(\vphi_0)-\gamma(\vphi_1) \|$ denotes the Euclidean length of
a line segment from $\gamma(\vphi_0)$ to $\gamma(\vphi_1)$, then
an orbit of the billiard consists of a sequence $\{\ldots, \vphi_{k}, \vphi_{k+1},\ldots \}$
so that the formal orbit length,
\[
	W(\ldots, \vphi_0,\vphi_1,\vphi_2, \ldots) = \sum_{k\in\bZ} \gener(\vphi_{k-1},\vphi_k),
\]
is stationary with respect to variations of $\vphi_{k}$. This is,
for all $k\in\bZ$,
\[
\frac{\partial \gener}{\partial \vphi_1}(\vphi_{k-1},\vphi_k)+
\frac{\partial \gener}{\partial \vphi_0}(\vphi_k,\vphi_{k+1})=0.
\]

Equivalently,
one can define a canonically conjugate ``momentum'' variable $r$, so that
the two definitions,
\begin{align*}
	r_0(\vphi_0,\vphi_1) &=-\frac{\partial \gener}{\partial \vphi_0}(\vphi_0,\vphi_1)
	 = \frac{\langle\gamma(\vphi_1)-\gamma(\vphi_0),\gamma'(\vphi_0)\rangle}{\gener(\vphi_0,\vphi_1)} ,\\
	r_1(\vphi_0,\vphi_1) &=\frac{\partial \gener}{\partial \vphi_1}(\vphi_0,\vphi_1)
	 =\frac{\langle \gamma(\vphi_1)-\gamma(\vphi_0),\gamma'(\vphi_1) \rangle}{\gener(\vphi_0,\vphi_1)} ,
\end{align*}
agree along an orbit: $r_0(\vphi_{k},\vphi_{k+1}) = r_1(\vphi_{k-1},\vphi_{k})$.
These definitions imply the standard rule that the ``angle of incidence'' is equal to the ``angle of reflection''.

Letting $\cM=\{(\vphi_0,\vphi_1)\in\bR/2\pi \bZ \times \bR/2\pi\bZ:\vphi_0\neq \vphi_1\}$,
the Lagrangian form of the billiard map $L: \cM \to \cM$, is
\[
	 (\vphi_{k},\vphi_{k+1}) = L(\vphi_{k-1}, \vphi_k).
\]
Alternatively, the canonical form of this map is defined on the open annulus
\beq{AnnulusDef}
	\bA= \big\{(\vphi,r)\in \bR/2\pi\bZ\times \bR: r^2<\|\gamma'(\vphi)\|^2\big\}.
\eeq
To obtain this, first define two functions $\kappa_{0,1}:\cM\to \bA$ by
\bsplit{kappaDefine}
	\kappa_0(\vphi_0,\vphi_1) &=&\left(\vphi_0, r_0(\vphi_0,\vphi_1)\right) , \\
	\kappa_1(\vphi_0,\vphi_1) &=&\left(\vphi_1, r_1(\vphi_0,\vphi_1) \right) .
\esplit
When $\cC$ is strictly convex and $\gamma$ is $C^2$, these can be shown to be diffeomorphisms \cite{Lomeli96}.
The canonical billiard map $B:\bA\to \bA$ is then given by $B=\kappa_1\circ\kappa_0^{-1}$,
so that
\beq{BilliardMap}
	(\vphi_{k+1},r_{k+1}) = B(\vphi_{k},r_{k}).
\eeq
Note that $L=\kappa_1^{-1}\circ \kappa_1^{-1} = \kappa_0^{-1}\circ B \circ \kappa_0$, so that $L$ is conjugate to $B$.
This construction implies that $B$ is symplectic with respect to
the two-form $\Omega=d\vphi\wedge dr$. Indeed, $B$ is the
exact symplectic twist map generated by $\gener(\vphi_0,\vphi_1)$ \cite{Meiss92a, Lomeli96}.

%%%%%%%%%%%%%%%%%%%%%%%%%%%%%%%%%%%%%%%%%%%%%%%%%%%%%%%%%%%%%
%%%%% Elliptic Billiards
%%%%%%%%%%%%%%%%%%%%%%%%%%%%%%%%%%%%%%%%%%%%%%%%%%%%%%%%%%%%%
\subsection{Symmetry Reduction of Elliptic Billiards}\label{sec:EllipticBilliards}

Suppose that the billiard has boundary $\cC = E(f)$, \Eq{Ellipse}, for some $0 < f < 1$. Following \Eq{ConvexCurve},
the ellipse has the convenient parameterization
\beq{OuterEllipse}
	E(f) = \left\{\gamma(\vphi)=\frac{1}{f}(\cos\vphi, \sqrt{1-f^2}\,\sin \vphi)\in\bR^2: 0 \le \vphi< 2\pi \right\}.
\eeq
Noting that
$
\|\gamma'(\vphi)\|^2= %\frac{1}{f^2}\left(\sin^2\vphi+(1-f^2)\cos^2 \vphi\right)=
					 \frac{1}{f^2}(1-f^2\cos^2 \vphi)
$,
then the annulus \Eq{AnnulusDef} becomes
\[
	\bA= \bigg\{(\vphi,r)\in \left(\bR/2\pi\bZ\right)\times \bR: \, H(\vphi,r) <\tfrac{1}{2}f^{-2} \bigg\} \;,
\]
where
\beq{Hamiltonian}
	H(\vphi,r)=\tfrac12r^2+\tfrac12\cos^2\vphi.
\eeq

As is well known, \cite{Lomeli96}, $H$ is an invariant under the elliptic billiard dynamics: $B^*H = H$;
thus the orbits of \Eq{BilliardMap} lie on contours of $H$, as shown in \Fig{PhasePlane}.
The associated symmetry is the Hamiltonian vector field $X_H$ generated
by $H$ with respect to $\Omega$, i.e., the function that satisfies $i_{X_H} \Omega = dH$.
To see this, let $\Phi:\bR\times \bA\to\bA$ be the (complete) flow of $X_H$.
Writing $\Phi_t=\Phi(t,\cdot)$, as usual, then
%\beq{flow}
% \frac{\partial}{\partial t}\Phi_t(\xi) = X_H(\Phi_t(\xi)), \qquad
% X_H(\Phi_t) = (H_r(\Phi_t),-H_\vphi(\Phi_t)), \qquad \Phi_0(\xi)=\xi.
%\eeq
\[
 \frac{\partial}{\partial t}\Phi_t(\xi) = X_H(\Phi_t(\xi)), \qquad
 X_H(\Phi_t) = (H_r(\Phi_t),-H_\vphi(\Phi_t)), \qquad \Phi_0(\xi)=\xi.
\]

%%%%%%%%%%%
\begin{lem}
 $\Phi_t$ is an equivariant symmetry for the map $B$: $B\circ\Phi_t=\Phi_t \circ B$,
for all $t \in \bR$.
\end{lem}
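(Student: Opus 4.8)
The plan is to obtain the equivariance for free from the two structural properties of $B$ recalled just above the statement: that $B$ is symplectic for $\Omega=d\varphi\wedge dr$, and that it preserves the Hamiltonian, $B^*H=H$. The underlying principle is that a symplectomorphism preserving $H$ must also preserve the Hamiltonian vector field $X_H$, and that any diffeomorphism preserving a vector field commutes with the flow of that field. So the proof splits into two steps: first show $B^*X_H=X_H$, then deduce the commutation of $B$ with $\Phi_t$.

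First I would show $B^*X_H=X_H$. Pulling back the defining relation $i_{X_H}\Omega=dH$ by $B$, and using the naturality identity $B^*(i_X\alpha)=i_{B^*X}(B^*\alpha)$ together with $B^*\Omega=\Omega$ and $B^*H=H$, one gets
\[
 i_{B^*X_H}\,\Omega \;=\; B^*\!\big(i_{X_H}\Omega\big)\;=\;B^*(dH)\;=\;d(B^*H)\;=\;dH\;=\;i_{X_H}\Omega .
\]
Since $\Omega$ is nondegenerate this forces $B^*X_H=X_H$, that is, $DB(\xi)\,X_H(\xi)=X_H(B(\xi))$ for every $\xi\in\bA$.

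Next I would deduce $B\circ\Phi_t=\Phi_t\circ B$ from this invariance. Fixing $\xi\in\bA$, consider the curve $c(t)=B(\Phi_t(\xi))$; differentiating and using $\tfrac{\partial}{\partial t}\Phi_t(\xi)=X_H(\Phi_t(\xi))$ and the previous step,
\[
 \dot c(t)\;=\;DB\big(\Phi_t(\xi)\big)\,X_H\big(\Phi_t(\xi)\big)\;=\;X_H\big(B(\Phi_t(\xi))\big)\;=\;X_H\big(c(t)\big),
\]
so $c$ is the integral curve of $X_H$ with $c(0)=B(\xi)$. By uniqueness of integral curves, $c(t)=\Phi_t(B(\xi))$, which is precisely the claimed identity. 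Since $B$ maps $\bA$ into itself and the trajectories of $X_H$ remain on the bounded level sets of $H$ inside $\bA$, every quantity here is globally defined, so the identity holds for all $t\in\bR$.

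I do not expect a real obstacle here: the argument is formal once the symplectic and invariance properties of $B$ are in hand, and these were established in \Sec{TwistMaps} and in the discussion following \Eq{Hamiltonian}. The only point worth a remark is that $\Phi_t$ is globally defined on $\bA$, which is why the completeness of the flow (noted when $\Phi$ was introduced) is used; it holds because $H$ has bounded sublevel sets on $\bA$. A more hands-on alternative would verify the commutation directly from the twist-map generating function $\gener(\varphi_0,\varphi_1)$, but the Hamiltonian argument is shorter and is the version that feeds into the covering-space construction of \cite{Dullin12}.
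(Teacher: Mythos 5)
Your argument is exactly the one the paper gives: pull back $i_{X_H}\Omega = dH$ through the symplectic, $H$-preserving map $B$ to conclude $B^*X_H = X_H$, then note that invariance of the generating vector field yields commutation with its flow. The only difference is that you spell out the flow-commutation step via uniqueness of integral curves, which the paper leaves implicit; both are correct and equivalent.
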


\begin{proof}
Since $B$ is symplectic, then $B^*\Omega=\Omega$. Since $H = B^* H$, then
\[
	i_{X_H}\Omega = dH = d(B^* H) = B^*(i_{X_H} \Omega ) = i_{B^* X_H} \Omega,
\]
so $B^*X_H=X_H$. This implies that the flow $\Phi_t$ commutes with the map $B$.
\end{proof}
%%%%%%%%%%%

%%%%%%%%%
\InsertFig{PhasePlaneBW}
{Contours \Eq{Hamiltonian} of the invariant $H$ in the annulus $\bA$, showing the sets $\cU^\pm$,
$\cW$ and its boundary $\cV$. The Poincar\'e section
$\Sigma = \{\vphi = \pi/2\} \cap \cU^+$ is indicated by the vertical segment. The invariant sets $\Lambda(f)$ and $\Lambda(e)$ are labeled.}
{PhasePlane}{3.5 in}
%%%%%%%%

The phase space $\bA$ can be divided into four invariant subsets as shown in \Fig{PhasePlane}:
%\begin{align*}
%%	 &\{(\pm \tfrac{\pi}{2},0)\},\\
%	\cU^+ &=\{\tfrac{1}{2} <H < \tfrac{1}{2}f^{-2}, r>0 \},\\
%	\cV &=\{H =\tfrac{1}{2} \},\\
%	\cW &=\{0 \le H < \tfrac{1}{2} \},\\
%	\cU^- &=\{\tfrac{1}{2} <H < \tfrac{1}{2}f^{-2}, r<0 \}.
%\end{align*}
	$\cV =\{H =\tfrac{1}{2} \}$,
	$\cW =\{0 \le H < \tfrac{1}{2} \}$,
$\cU^+ =\{\tfrac{1}{2} <H < \tfrac{1}{2}f^{-2}, r>0 \}$, and
	$\cU^- =\{\tfrac{1}{2} <H < \tfrac{1}{2}f^{-2}, r<0 \}$.

The set $\cW$ contains the period two orbit $(-\tfrac{\pi}{2},0) \to (\tfrac{\pi}{2},0)$; more generally,
its two components are mapped from one to the other under $B$.
However, note that $\Phi$ leaves each component of $\cW$ invariant.
The boundary set $\cV$ separates $\cW$ from the outside regions $\cU^{\pm}$; it consists
of orbits that are homoclinic to the period-two saddle $(0,0) \to (\pi,0)$.
%The dynamics of both the billiard map $B$ and the flow $\Phi$ can be restricted to each
%of these subsets.

In this paper, we will consider only the dynamics of $B$ restricted to $\cU^+$ where the orbits encircle
the ellipse in a counter-clockwise direction. As is well-known, the orbits
of the elliptic billiard in $\cU^+$ have caustics that are ellipses \cite{Chang88, Lomeli96, Dragovic10}.
Each level set of $H$ in $\cU^+$ is a topological circle that we will denote by
\beq{LambdaDef}
	\Lambda(e)=\left\{(\vphi,r)\in\cU^+:H(\vphi,r)=\tfrac12 e^{-2}\right\},
\eeq
where $e$ is the eccentricity of the ellipse $E(e)$ that is the
corresponding caustic.

Thus, if $(\vphi_0,r_0)\in \Lambda(e)$, then the bi-infinite orbit
 $(\vphi_k,r_k)=B^k(\vphi_0,r_0)$ has the property that each segment
 $\overrightarrow{\,\gamma(\vphi_k)\,\gamma(\vphi_{k+1})\,}$ is tangent to the caustic $E(e)$.
Then $\cU^+$ is foliated by these level sets:
$\cU^+=\bigcup\{\Lambda(e):f<e<1\}$. %, where $f=a^{-1}$.
The dynamics on $\cU^-$ are similar under a reflection. % and the dynamics on $\cW$ will be studied in \Sec{hyp}.

As we will see below, Poncelet's theorem can be viewed as a consequence
of a lifting theorem for equivariant maps that we proved in \cite{Dullin12}. To apply this theorem,
let
\beq{SigmaDefine}
	\Sigma=\left\{\left(\tfrac{\pi}{2},r\right)\in\bR^2:1<r<\tfrac{1}{f}\right\} = \left\{\vphi = \tfrac{\pi}{2}\right\} \cap \cU^+,
\eeq
be a (complete) Poincar\'e section for the flow $\Phi_t$ on the invariant set $\cU^+$, as indicated in \Fig{PhasePlane}.

%%%%%
\begin{thm}[Equivariant Lifting]\label{thm:Equivariant}
The map $p_0:\bR \times \Sigma \to \cU^+$ defined by $p_0(t,\xi)=\Phi_t(\xi)$
is a covering map, with a cyclic group of deck transformations
generated by $\psi(t,\xi)=(t+ T(\xi),\xi)$, where $T(\xi)$ is the period of
$\Phi_t(\xi)$. In addition, the lift of the billiard map $B|_{\cU^+}$,
$\widetilde{B}:\bR \times \Sigma \to \bR \times \Sigma$, is
\beq{SymmetryReduced}
	\widetilde{B}(t,\xi)=(t+\hat{\rho}(\xi),\xi),
\eeq
where $\hat{\rho}:\Sigma\to\bR$ is a smooth function on $\Sigma$.
\end{thm}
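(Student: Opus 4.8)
The plan is to deduce the theorem from the general lifting result for equivariant maps of \cite{Dullin12}: when a complete flow $\Phi_t$ is an equivariant symmetry of a map $B$ and admits a global Poincar\'e section $\Sigma$ on the relevant invariant set, then $B$ lifts through the suspension covering determined by $\Sigma$ to a skew product over $\Sigma$. The symmetry hypothesis is exactly the content of the preceding lemma, so the real work is to check that $\Sigma$, \Eq{SigmaDefine}, is a global cross section for $\Phi_t$ restricted to $\cU^+$, and then to read off the form of the lift.

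First I would verify the cross-section property. Since $X_H=(H_r,-H_\vphi)=(r,\tfrac12\sin 2\vphi)$ vanishes only if $r=0$, the vector field has no zeros on $\cU^+$ (where $r>0$). For $f<e<1$ the level set $\Lambda(e)$, \Eq{LambdaDef}, is the graph $r=\sqrt{e^{-2}-\cos^2\vphi}$ over $\vphi\in\bR/2\pi\bZ$ — the radicand is bounded below by $e^{-2}-1>0$ — hence a smooth embedded circle, and these circles foliate $\cU^+$. Along the flow $\dot\vphi=H_r=r>0$, so $\vphi$ increases monotonically once around, each $\Lambda(e)$ is a single periodic orbit of some period $T>0$, and it meets $\{\vphi=\tfrac{\pi}{2}\}$ exactly once and transversally. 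Solving $H=\tfrac12 e^{-2}$ at $\vphi=\tfrac{\pi}{2}$ gives $r=e^{-1}\in(1,f^{-1})$, so this unique intersection point lies in $\Sigma$, and every point of $\Sigma$ arises this way. Thus $\Sigma$ is a complete global section, $p_0(t,\xi)=\Phi_t(\xi)$ is a surjective local diffeomorphism (by flow-box coordinates), and its fibers are the orbits of $\psi(t,\xi)=(t+T(\xi),\xi)$; this exhibits $p_0$ as a covering map with deck group the cyclic group $\langle\psi\rangle$, presenting $\cU^+$ as the mapping torus of $\id_\Sigma$ with return time $T$.

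Next I would lift $B$. Because $\Sigma$ is an open interval, $\bR\times\Sigma$ is simply connected, so the smooth map $B\circ p_0:\bR\times\Sigma\to\cU^+$ lifts through $p_0$ to a smooth $\widetilde B:\bR\times\Sigma\to\bR\times\Sigma$ with $p_0\circ\widetilde B=B\circ p_0$. The flow lifts to translation, $\widetilde\Phi_s(t,\xi)=(t+s,\xi)$, and since $B$ commutes with every $\Phi_s$ (the lemma), the maps $\widetilde B\circ\widetilde\Phi_s$ and $\widetilde\Phi_s\circ\widetilde B$ are lifts of the same map agreeing at $s=0$, so by uniqueness of path lifting $\widetilde B$ commutes with every $\widetilde\Phi_s$. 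Writing $\widetilde B(t,\xi)=(A(t,\xi),h(t,\xi))$ and setting $t=0$ then yields $A(t,\xi)=t+\hat\rho(\xi)$ and $h(t,\xi)=h(\xi)$, i.e.\ $\widetilde B(t,\xi)=(t+\hat\rho(\xi),h(\xi))$ with $\hat\rho$ smooth. Finally, $B^*H=H$ forces $B$ to map each $\Lambda(e)$ to itself, and $p_0^{-1}(\Lambda(e))=\bR\times\{\xi_e\}$ where $\{\xi_e\}=\Lambda(e)\cap\Sigma$; hence $\widetilde B(\bR\times\{\xi_e\})\subset\bR\times\{\xi_e\}$, so $h(\xi_e)=\xi_e$, and since $e$ ranges over $(f,1)$ this gives $h=\id_\Sigma$. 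This is \Eq{SymmetryReduced}.

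The one genuinely geometric step — and the point to get right — is the global-section claim: that every orbit of $\Phi_t$ in $\cU^+$ is periodic and crosses $\Sigma$ exactly once transversally. It rests on the explicit observation that the relevant level sets of $H$ are full graphs over the $\vphi$-circle along which $\dot\vphi>0$. Once that is established, the remainder is either the preceding lemma or standard covering-space and suspension machinery, so the theorem follows as a direct application of \cite{Dullin12}.
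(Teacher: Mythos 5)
Your proposal is correct and follows essentially the same route as the paper: cite the lifting result of \cite{Dullin12} for the covering-space and skew-product structure, then use $B^*H=H$ to force the base map on $\Sigma$ to be the identity. The only difference is that you verify explicitly the hypotheses (that $\Sigma$ is a global transversal and that $\bR\times\Sigma$ is simply connected so the lift exists) and the commutation argument, whereas the paper defers these to the cited reference after noting that $\Sigma$ is simply connected and relatively closed in $\cU^+$.
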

%%%%

\begin{proof}
The manifold $\Sigma$ is simply connected and relatively closed in $\cU^+$.
Previously \cite{Dullin12},
we argued that such conditions imply that $p_0$ is a covering map, with deck transformations
generated by %$\psi_\Phi(\xi,\theta)=(\xi,\theta+ T(\xi))$.
$\psi(\xi,\theta)=(\Phi_{-T(\xi)}(\xi),\theta+ T(\xi))$,
where $T$ is the smallest positive time such that
$\Phi_{T(\xi)}(\xi)\in\Sigma$. Since the flow $\Phi_t(\xi)$ is periodic, $T(\xi)$ is the period of
$\Phi_t(\xi)$, and $\Phi_{-T(\xi)}(\xi)=\xi$.

It was also proven in \cite{Dullin12} that,
under the covering space $p_0$, the lift $\widetilde B$ of
any diffeomorphism $B$ that is equivariant with respect to $\Phi_t$ has
to have the skew product form $\widetilde B(t,e)=(t+\hat{\rho}(\xi),g(\xi))$,
where $g:\Sigma\to\Sigma$ is a diffeomorphism of $\Sigma$. In this case,
the invariance of each level set implies that $g(\xi)=\xi$, giving \Eq{SymmetryReduced}.
\end{proof}
%%%%%

%%%%%%%%%%%%%%%%%%%%%%%%%%%%%%%%%%%%%%%%%%%%%%%%%%%%%%%%%%%%%
%%%% Confocal Poncelet
%%%%%%%%%%%%%%%%%%%%%%%%%%%%%%%%%%%%%%%%%%%%%%%%%%%%%%%%%%%%%
\subsection{Poncelet Maps for Confocal Ellipses}\label{sec:ConfocalPoncelet}

As in \Sec{IntroBilliards}, we denote the Poncelet map for the confocal
ellipses by by $B_e^f$, to indicate that it is conjugate to the
billiard map restricted to the invariant set $\Lambda(e)$, defined in \Eq{LambdaDef}.

%Consider the confocal family of ellipses $E(\eps)$ defined by \Eq{Ellipse}.
%The elliptic billiard map $B:\bA\to\bA$ restricted to an invariant circle in $\Lambda(e)$ given by \Eq{LambdaDef},
%corresponds to the Poncelet map that has outer ellipse $\cCo=E(f)$ and inner ellipse $\cCi=E(e)$.
%Indeed, each orbit of the elliptic billiard in $\cU^+$ is an orbit of a Poncelet map.
%We denote this Poncelet map by $B_e^f$, to indicate that it is conjugate to the
%billiard map restricted to the invariant set $\Lambda(e)$.

%%%%
\begin{lem}[Confocal Poncelet Map]\label{lem:BefMap}
Given eccentricities $0<f<e<1$, let
\(
	B_e^f:E(f)\to E(f)
\)
denote the Poncelet map with respect to the confocal
ellipses $E(f)$ and $E(e)$. Then $B_e^f$ is conjugate to the elliptical billiard
map $B|_{\Lambda(e)}$, i.e., the billiard map for the boundary $E(f)$ restricted
to orbits with positive orientation that have $E(e)$ as a caustic.
\end{lem}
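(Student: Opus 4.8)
The plan is to exhibit an explicit conjugacy between the Poncelet map $B_e^f$ on the outer ellipse $E(f)$ and the billiard map $B$ restricted to the invariant circle $\Lambda(e)$ from \Eq{LambdaDef}. The natural candidate is one of the boundary maps $\kappa_0$ or $\kappa_1$ of \Eq{kappaDefine}, restricted appropriately. Recall that a billiard orbit with caustic $E(e)$ consists of a polygonal path inscribed in $E(f)$ whose successive chords are tangent to $E(e)$; this is precisely the defining geometric property of the Poncelet map for the pair $(E(f),E(e))$. So the correspondence is morally obvious --- the content is to make it a genuine topological conjugacy in the coordinates already set up.

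Concretely, I would proceed as follows. First, use the parameterization \Eq{OuterEllipse} to identify $E(f)$ with the circle $\bR/2\pi\bZ$ via $\vphi\mapsto\gamma(\vphi)$, so that $B_e^f$ becomes a circle map $\beta_e^f:\bR/2\pi\bZ\to\bR/2\pi\bZ$; by definition $\gamma(\beta_e^f(\vphi_0))=\gamma(\vphi_1)$ is the unique point on $E(f)$, distinct from $\gamma(\vphi_0)$, such that the oriented chord $\overrightarrow{\gamma(\vphi_0)\gamma(\vphi_1)}$ is tangent to $E(e)$ with $E(e)$ on the left. Second, observe that the map $\pi:\Lambda(e)\to\bR/2\pi\bZ$ obtained by composing the inclusion $\Lambda(e)\hookrightarrow\cU^+\subset\cM$-image with $\kappa_0^{-1}$ and then projecting $(\vphi,r)\mapsto\vphi$ is a homeomorphism: on $\Lambda(e)$ the momentum $r$ is determined by $\vphi$ through $H(\vphi,r)=\tfrac12 e^{-2}$ together with the sign $r>0$ enforced on $\cU^+$ --- the curve $\Lambda(e)$ is a graph over the $\vphi$-circle precisely because $\tfrac12 e^{-2}>\tfrac12$ so $r^2=e^{-2}-\cos^2\vphi>0$ for all $\vphi$. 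Third, verify the intertwining relation $\pi\circ B|_{\Lambda(e)}=\beta_e^f\circ\pi$: this is exactly the statement that if $(\vphi_0,r_0)\in\Lambda(e)$ and $(\vphi_1,r_1)=B(\vphi_0,r_0)$ then $\gamma(\vphi_0)$ and $\gamma(\vphi_1)$ are joined by a chord tangent to $E(e)$, which is the classical fact (recalled in the paragraph after \Eq{LambdaDef}) that billiard chords of an orbit on $\Lambda(e)$ are tangent to the caustic $E(e)$. The orientation claim --- that the billiard map advances $\vphi$ in the positive sense on $\cU^+$, matching the ``positive orientation'' convention in the definition of $P$ --- follows from the stipulation $r>0$ on $\cU^+$, which is why the billiard is restricted to this component rather than $\cU^-$.

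The main obstacle, and the only step requiring genuine care rather than bookkeeping, is the verification that the caustic of a billiard orbit lying on the level set $\Lambda(e)$ is indeed the confocal ellipse $E(e)$ with that specific eccentricity $e$ --- i.e., matching the parameter labelling $\Lambda(e)$ in \Eq{LambdaDef} to the eccentricity of the geometric caustic. This is where the particular form \Eq{Hamiltonian} of $H$ and the parameterization \Eq{OuterEllipse} must be used in concert: one computes, for the chord through $\gamma(\vphi_0)$ with the momentum $r_0$ prescribed by $\kappa_0$, the envelope of the family of such chords as $(\vphi_0,r_0)$ ranges over $\Lambda(e)$, and checks it is the ellipse $x^2/a^2+y^2/b^2=1$ with $a=1/e$, $b=\sqrt{1-e^2}/e$. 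This is a standard but slightly involved computation with confocal conics; since the excerpt explicitly cites \cite{Chang88, Lomeli96, Dragovic10} for the fact that orbits in $\cU^+$ have elliptical caustics and since \Eq{LambdaDef} already \emph{names} the level set by the caustic eccentricity, I would simply invoke this identification and then the conjugacy is immediate: $\pi$ is the desired conjugating homeomorphism, and $B_e^f=\kappa_0|_{\Lambda(e)}\circ(\text{suitable identification})$ is conjugate to $B|_{\Lambda(e)}$ as claimed. I would close by remarking that, composed with \Th{Equivariant}, this conjugacy is what transports the skew-product lift \Eq{SymmetryReduced} to the Poncelet map $B_e^f$, setting up the rotation-number formula of \Th{BilliardRotationNumber}.
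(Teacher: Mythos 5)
Your proposal is correct and follows essentially the same route as the paper's proof: parameterize $E(f)$ by $\gamma$, use the classical fact that chords of a billiard orbit on $\Lambda(e)$ are tangent to the caustic $E(e)$ with positive orientation on $\cU^+$, and read off the conjugacy. You add one genuinely useful piece of explicitness that the paper treats as tacit---observing that $\Lambda(e)$ is a graph over the $\vphi$-circle because $r^2=e^{-2}-\cos^2\vphi>0$, so the projection to $\vphi$ really is a homeomorphism---but this is a refinement of the same argument rather than a different one.
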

%%%%%

%%%%%%
\InsertFig{confocal}
{Confocal Poncelet map $B_e^f$ on $E(f)$ with caustic $E(e)$.}
{confocal}{3 in}
%%%%%%

\begin{proof}
Fixing $0 <f < 1$, we parameterize $E(f)$ using $\gamma: \bS^1 \to \bR^2$ as in \Eq{OuterEllipse}. As sketched in \Fig{confocal}, if $(\vphi_k,r_k)=B^k(\vphi_0,r_0)$ is the billiard orbit
for $(\vphi_k,r_k) \in \Lambda(e)$, then each point $z_k = \gamma(\vphi_k)\in E(f)$,
and each oriented segment $\overrightarrow{ z_k\,z_{k+1}\,}$
is tangent to $E(e)$ with positive orientation. This implies that
\(
	B_e^f(\gamma(\vphi_k))= z_{k+1} = \gamma(\vphi_{k+1}),
\) for all $ k \in \bZ$.
Thus the Poncelet map $B_e^f$ is conjugate to the elliptic billiard map $B|_{\Lambda(e)}$.
\end{proof}

It is this link between the elliptical billiard and Poncelet maps that we will use in the following sections to study general Poncelet maps.

%%%%%%
\begin{remark}
To define several of the covering spaces that will reveal the structure of Poncelet maps,
we use the Jacobi elliptic functions $\sn$, $\cn$, $\dn$ and $\am$ with modulus $e$. These have period $4K(e)$,
where $K(e) = F(\tfrac{\pi}{2}, e) $ is the complete elliptic integral of the first kind, and
\beq{EllipticIntegral}
	F(\phi,e) \equiv \int_0^\phi\frac{d\,\tau}{\sqrt{1-e^2\sin^2\tau}}.
\eeq
\end{remark}
%%%%%%

%%%%%%%
\begin{thm}[Confocal Covering Map]\label{thm:ConfocalCover}
For $0<f<e<1$, the map $\pi_e^f:\bR\to E(f)$ defined by
\beq{maincover}
     \pi_e^f(\theta)=\frac{1}{f}
     \begin{pmatrix}
         -\sn\left(4K(e)\theta,e\right) \\
         \sqrt{1-f^2}\,\cn\left(4K(e)\theta,e\right) \\
     \end{pmatrix} \;,
 \eeq
is a covering map of $E(f)$, with group of deck transformations
generated by $\theta\mapsto \theta+1$.
Moreover, the Poncelet map $B_e^f$ has a lift of the form
\beq{LiftedBilliard}
	\widetilde B_e^f(\theta)=\theta+\rhoin(e,f),
\eeq
where $\rhoin(e,f)$ only depends on $(e,f)$. In addition,
%Without loss of generality,
we can assume that $0<\rhoin(e,f)<1$.
\end{thm}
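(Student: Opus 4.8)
The plan is to build the covering map $\pi_e^f$ by composing the equivariant covering $p_0:\bR\times\Sigma\to\cU^+$ from \Th{Equivariant} with the billiard-to-Poncelet conjugacy of \Lem{BefMap}, after trivializing the $\Sigma$-factor by restricting to a single level set $\Lambda(e)$ and reparameterizing the pendulum flow $\Phi_t$ by its arc-length/action so that the period becomes a normalized constant. First I would fix the level set: pick the unique $\xi_e\in\Sigma$ with $H(\xi_e)=\tfrac12 e^{-2}$, so that $\Phi_t(\xi_e)$ traces out $\Lambda(e)$ with period $T(\xi_e)$. The orbit of $B$ through $\xi_e$ stays on $\Lambda(e)$ (since $H$ is invariant), and by \Th{Equivariant} the restricted lift is $\widetilde B(t)=t+\hat\rho(\xi_e)$ in the time coordinate $t$. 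Composing with $\gamma\circ(\text{first projection of }\kappa_0^{-1})$ and the conjugacy $B_e^f(\gamma(\vphi_k))=\gamma(\vphi_{k+1})$ of \Lem{BefMap}, the map $t\mapsto \gamma(\vphi(\Phi_t(\xi_e)))$ is a covering $\bR\to E(f)$ whose deck group is generated by $t\mapsto t+T(\xi_e)$, and on which $B_e^f$ lifts to a rigid translation. Rescaling $\theta = t/T(\xi_e)$ normalizes the deck generator to $\theta\mapsto\theta+1$ and defines $\rhoin(e,f)=\hat\rho(\xi_e)/T(\xi_e)\in\bR/\bZ$, which depends only on $(e,f)$.

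The second half is the explicit computation: I must show that this abstractly-defined covering is exactly \Eq{maincover}. Here I would solve Hamilton's equations for $H=\tfrac12 r^2+\tfrac12\cos^2\vphi$ on $\cU^+$ explicitly. On $\Lambda(e)$ one has $r^2 = e^{-2}-\cos^2\vphi$, and $\dot\vphi = H_r = r = \sqrt{e^{-2}-\cos^2\vphi}$. Writing $u=\sin\vphi$ (valid on $\cU^+$ where we can track the branch), this gives $\dot u = \cos\vphi\,\sqrt{e^{-2}-1+u^2}$, i.e. $\dot u^2 = (1-u^2)(e^{-2}-1+u^2)$, which after the affine substitution matching $u$ to $-\cn$ is precisely the defining ODE for a Jacobi elliptic function of modulus $e$: one checks that $\vphi(t)$ with $\sin\vphi = -\sn(4K(e)\theta,e)$, $\cos\vphi = \cn(4K(e)\theta,e)\cdot(\text{sign})$, solves the flow with the correct period, using $\sn^2+\cn^2=1$, $\cn'=-\sn\,\dn$, and $\dn^2 = 1-e^2\sn^2$. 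Since $\gamma(\vphi) = \tfrac1f(\cos\vphi,\sqrt{1-f^2}\sin\vphi)$, substituting $\sin\vphi=-\sn(4K(e)\theta,e)$, $\cos\vphi = \cn(4K(e)\theta,e)$ yields exactly \Eq{maincover}; the period $4K(e)$ of $\sn,\cn$ together with the rescaling $\theta=t/T(\xi_e)$ forces the deck generator to be $\theta\mapsto\theta+1$, confirming $T(\xi_e)$ corresponds to one full $4K(e)$-period. The sign choices ($-\sn$, and the branch of $\cos\vphi$) are fixed by orienting the flow counterclockwise on $\cU^+$ and by the choice of base point, e.g. $\theta=0\mapsto(\tfrac{\pi}{2},r)$-type point on the section $\Sigma$; I would verify $\pi_e^f(0)$ lands on $E(f)\cap\{x=0\}$ and that $\theta$ increasing moves in the positive direction.

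Finally, the lift of $B_e^f$ is a translation by \Th{Equivariant}, hence of the form $\theta\mapsto\theta+\rhoin(e,f)$; and since translations commute with the deck group $\theta\mapsto\theta+1$, $\rhoin(e,f)$ is well-defined mod $1$, so we may normalize $0<\rhoin(e,f)<1$ (the value $0$ is excluded because $B_e^f$ has no fixed point, as the tangent segment from $z_0$ genuinely advances; alternatively one appeals to the twist/monotonicity of the billiard map). The main obstacle I anticipate is the explicit integration step — correctly identifying which Jacobi function and modulus arise, and pinning down all the signs and the branch of $\cos\vphi$ on $\cU^+$ — together with verifying that the normalized period is exactly $1$ rather than a rational multiple; the equivariant-lifting machinery of \Th{Equivariant} does the structural work, but matching it to the closed-form \Eq{maincover} requires care with the elliptic-function identities and the orientation conventions.
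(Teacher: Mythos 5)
Your structural approach is essentially the same as the paper's: restrict the equivariant covering of \Th{Equivariant} to the level set $\Lambda(e)$, rescale the flow time so the deck generator becomes $\theta\mapsto\theta+1$, compose with the parameterization $\gamma$ of $E(f)$ (via \Lem{BefMap}), and then write everything explicitly in terms of Jacobi elliptic functions. The normalization $0<\rhoin<1$ via commutation with the deck group matches the paper as well, and your extra remark about $B_e^f$ having no fixed point is a nice clarification.

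There is, however, a concrete error in the elliptic-function identification. You claim $\sin\vphi=-\sn(4K(e)\theta,e)$ and $\cos\vphi=\cn(4K(e)\theta,e)$, which under $\gamma(\vphi)=\tfrac1f(\cos\vphi,\sqrt{1-f^2}\sin\vphi)$ gives $\tfrac1f\,(\cn,\ -\sqrt{1-f^2}\sn)$, \emph{not} \Eq{maincover}. The correct assignment is the reverse: the pendulum flow on $\Lambda(e)$ with $r=e^{-1}$ is $\vphi(t)=\am(rt,r^{-1})+\tfrac{\pi}{2}$, so
\[
	\sin\vphi=\cos\!\bigl(\am(4K(e)\theta,e)\bigr)=\cn(4K(e)\theta,e),
	\qquad
	\cos\vphi=-\sin\!\bigl(\am(4K(e)\theta,e)\bigr)=-\sn(4K(e)\theta,e),
\]
and these substituted into $\gamma$ give exactly $\tfrac1f\,(-\sn,\ \sqrt{1-f^2}\cn)$ as in \Eq{maincover}. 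You in fact propose the base-point check that would have caught this: at $\theta=0$ the orbit sits on the section $\Sigma=\{\vphi=\tfrac{\pi}{2}\}$, where $\sin\vphi=1$ and $\cos\vphi=0$; since $\sn(0,e)=0$ and $\cn(0,e)=1$, this forces $\sin\vphi=\cn$ and $\cos\vphi=-\sn$. Your version places $\pi_e^f(0)$ at $\tfrac1f(1,0)$ rather than on $E(f)\cap\{x=0\}$ as you intended. Also, a minor consistency issue: you write $u=\sin\vphi$ and then say the substitution should match $u$ to $-\cn$, but two lines later write $\sin\vphi=-\sn$; neither agrees with the other or with the correct $u=\cn$. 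The ODE reduction itself, $\dot u^2=(1-u^2)(e^{-2}-1+u^2)$, is right, and with $u=\cn(e^{-1}t,e)$ one indeed gets $\dot u^2=e^{-2}\sn^2\dn^2=e^{-2}(1-u^2)(1-e^2+e^2u^2)=(1-u^2)(e^{-2}-1+u^2)$, confirming the modulus is $e$ and the frequency is $r=e^{-1}$, hence period $T(\xi_e)=4eK(e)$ and deck generator $\theta\mapsto\theta+1$ after the rescaling. Once the $\sn/\cn$ swap is corrected, your proof matches the paper's.
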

%%%%%%%

\begin{proof}
An explicit formula for the lift $p_0: \bR \times \Sigma \to \cU^+$ of \Th{Equivariant} can be obtained
using properties of Jacobi elliptic functions.
Letting $\xi=(\tfrac{\pi}{2} , r) \in \Sigma$, \Eq{SigmaDefine}, then the explicit
form of the flow of the pendulum, see e.g., \cite{Ochs11}, implies that
\begin{equation}%\label{eq:starstar}
 p_0(t,\xi)= \Phi_t (\xi ) =
 \begin{pmatrix}
 	\displaystyle \am\left(r t , r^{-1}\right)+{\pi}/{2} \\
 	\displaystyle r\dn\left(r t , r^{-1}\right) \\
 \end{pmatrix}.
\end{equation}
To specialize this to the level set $\Lambda(e) \subset \cU^+$, we let
\(
	\xi_e = (\pi/2,e^{-1}) = \Sigma \cap \Lambda(e),
\)
and $\pInc:\bR \to\Lambda(e)$ be the function
\beq{covering}
 	\pInc(\theta) \equiv p_0(4 eK(e)\theta,\xi_e) =
 \begin{pmatrix}
 \displaystyle \am\left(4K(e) \,\theta, e\right)+{\pi}/{2} \\
 \displaystyle e^{-1}\,\dn\left(4K(e) \,\theta, e\right) \\
 \end{pmatrix}.
\eeq
We define the inclusion $i:\Lambda(e)\hookrightarrow \cU^+$, and
the embedding $j: \bR \to \bR\times\Sigma$ given by
$j(\theta)= (4e\,K(e) \,\theta,\xi_e)$.
The implication is that the range of $p_0\circ j$ is $\Lambda(e)$, and
that the following diagram commutes:
\[
\xymatrix @R=3pc @C=3pc {
 {\bR\ }\ar[r]^-{j} \ar[d]_{\pInc} &
 \ \bR\times\Sigma \ar[d]^{p_0} \\
 \Lambda(e)\ \ar@{^{(}->}[r]^-{i}& \ \cU^+ \\
}
\]
In other words, $i\circ\pInc=p_0\circ j$.
This implies that $\pInc$ is a covering map of $\Lambda(e)$.
A simple computation shows that $B|_{\Lambda(e)}$, the billiard map restricted to the level set $\Lambda(e)$, obeys
\beq{qLift}
	\left.B\right|_{\Lambda(e)}\circ\pInc(\theta)= \pInc(\theta+\rhoin(e,f)),
\eeq
for all $\theta$, where $\rhoin(e,f)=\hat{\rho}(\xi_e)/(4K(e))$.
This shows that $\widetilde B_e^f(\theta)=\theta+\rhoin(e,f)$
is a lift of $\left.B\right|_{\Lambda(e)}$.

We let $J_f:\Lambda(e)\to E(f)$ be the diffeomorphism
\(
	J_f(\vphi,r)=\frac{1}{f}\big(\cos\vphi,\sqrt{1-f^2}\sin\vphi\big).
\)
Then the composition $J_f\circ \pInc$ is a covering map of $E(f)$, and
$\pi_e^f=J_f\circ \pInc$. Note that the circle map $B_e^f$ satisfies
\(
	J_f\circ \left.B\right|_{\Lambda(e)}=B_e^f\circ J_f.
\)
The implication is that the diagram
\[
\xymatrix @R=3pc @C=3pc {
 {\bR }\ar[r]^{{\widetilde B}_e^f} \ar[d]_{\pInc}\ar@/_2.5pc/[dd]_{\pi_e^f} &
 \ar@/^2.5pc/[dd]^{\pi_e^f} {\bR }\ar[d]^{\pInc}\\
 \Lambda(e)\ar[r]^{\left.B\right|_{\Lambda(e)}}\ar[d]_{J_f} & \Lambda(e)\ar[d]^{J_f}\\
 E(f) \ar[r]^{B_e^f}& E(f)
}
\]
commutes. This also implies that the map $\widetilde B_e^f$, given by \Eq{LiftedBilliard},
is a lift of $B_e^f$ to the covering map $\pi_e^f$.
Given that the group of deck transformations is generated by $\theta\mapsto \theta+1$,
we can assume that $0<\rhoin(e,f)<1$.
\end{proof}
%%%%%%%%

By \Eq{LiftedBilliard}, the rotation number of $B_e^f$ is $\rhoin(e,f)$.
We can now prove that this is given by \Eq{BilliardRotationNumber}, the result mentioned in \Sec{Introduction}:

\begin{proof}[Proof of \Th{BilliardRotationNumber} (Billiard Rotation Number)]
Suppose $(e,f)\in \Delta$ and fix $\omega= \omegain(e,f)$ as in \Eq{BilliardRotationNumber}.
Since $0<f<e<1$, $0<\omega<\pi/2$. Using the annular coordinates \Eq{AnnulusDef}, we define
\beq{twoangles}
 \varphi_0 = \tfrac{\pi}{2}-\omega, \qquad
 \varphi_1 = \tfrac{\pi}{2}+\omega,\qquad
 r_0 = e^{-1}\sqrt{1 -e^2\sin^2(\omega)}.
\eeq
Since
\(
	H(\varphi_0,r_0)=H(\varphi_1,r_0)=\tfrac{1}{2}e^{-2},
\)
and $r_0>0$, it is clear that $(\varphi_0,r_0),(\varphi_1,r_0)\in\Lambda(e)$.
Moreover, using \Eq{kappaDefine} we have that
$\kappa_0(\varphi_0, \varphi_1) =(\varphi_0, r_0) $ and
$\kappa_1(\varphi_0, \varphi_1) =(\varphi_1, r_0)$.
The implication is that $B|_{\Lambda(e)}(\varphi_0, r_0)=(\varphi_1, r_0)$.

Letting
\beq{theta12Def}
	\theta_1 = -\theta_0=\frac{F(\omega,e)}{4K(e)},
\eeq
then from \Eq{covering}, usual identities for Jacobi elliptic functions imply that
$\pInc(\theta_0)=(\varphi_0,r_0)$ and $\pInc(\theta_1)=(\varphi_1,r_0)$.
We know that $\widetilde B_e^f$ is of the form \Eq{LiftedBilliard}
with $0<\rhoin(e,f)<1$. Moreover, \Eq{qLift} implies that $\pInc \circ \widetilde B_e^f=B|_{\Lambda(e)} \circ \pInc$,
and thus
\[
	\pInc\left(\widetilde B_e^f(\theta_0)\right)=B|_{\Lambda(e)} \left(\pInc(\theta_0)\right)
		=B|_{\Lambda(e)} \left(\varphi_0,r_0\right)=(\varphi_1,r_0)=\pInc(\theta_1).
\]
Therefore, there exists an integer $m\in\mathbb{Z}$ such that
\(
	\theta_0+\rhoin(e,f)=\widetilde B_e^f(\theta_0)=\theta_1+m.
\)

Since $0<\omega<\pi/2$, we have that $0<F(\omega,e)<K(e)$. Then \Eq{theta12Def} implies
%\(\displaystyle
%	\theta_1-\theta_0=\frac{F(\omega,e)}{2K(e)}
%\)
%and hence
$0<\theta_1-\theta_0<\tfrac{1}{2}$. We conclude that
\[
	-\tfrac12 < m = \rhoin(e,f) - (\theta_1 -\theta_0) = \rhoin(e,f) - \frac{F(\omega,e)}{2K(e)} < 1 .
%	-\tfrac{1}{2}<\rhoin(e,f) = \theta_1 - \theta_0 +m < \tfrac12+m-(\theta_1-\theta_0)<1,
\]
Thus the integer $m=0$ and this directly gives \Eq{BilliardRotationNumber}.
%\[
%	\rhoin(e,f)=\theta_1-\theta_0=\frac{F(\omega,e)}{2K(e)}.
%\]
\end{proof}

%%%%%%
\begin{remark}
The geometric idea of the proof above is to choose two points $z_0,z_1\in E(f)$ that are related by symmetry and
that have the same value of the canonical momentum $r_0$. The implication is that the
line $\overrightarrow{ z_0 z_1}$ is horizontal. Choosing
$\varphi_0,\varphi_1$ as in \Eq{twoangles}, we define $z_0=\gamma(\varphi_0)$ and $z_1=\gamma(\varphi_1)$.

This implies that
\[
	z_0=\tfrac{1}{f}\big(\sin\omega,\sqrt{1-f^2}\,\cos\omega\big),\qquad
	z_1=\tfrac{1}{f}\big(-\sin\omega,\sqrt{1-f^2}\,\cos\omega\big).
\]
A simple computation from \Eq{BilliardRotationNumber} shows that $(\sqrt{1-f^2}/f)\,\cos\omega=\sqrt{1-e^2}/e$. We conclude that the segment
$\overrightarrow{ z_0 z_1}$ is tangent to $E(e)$ at the point $(0,\tfrac{1}{e}\sqrt{1-e^2})$, implying that $E(e)$ is a caustic for the orbit.
\end{remark}
%%%%%%

%%%%%%
\begin{remark}\label{rem:limits}
Some of the level sets of the rotation number \Eq{BilliardRotationNumber} were shown in \Fig{ConstantRho}.
Note that the curves in \Fig{ConstantRho} begin at $(0,0)$ and grow monotonically, ending at $(1,1)$.
The bounding values follow from two simple limits corresponding to the degenerate cases:
\[
 	\lim_{f\to e^-}\rhoin(e,f)= \lim_{e\to f^+}\rhoin(e,f)=0,\qquad
	\lim_{f\to 0^+}\rhoin(e,f)=\lim_{e\to 1^-}\rhoin(e,f)=\tfrac12.
\]
The last limit is not trivial---it is shown in \App{SingularLimit}.
In \Sec{ExplicitPorisms}, and in particular in \Lem{porism}, we will give an explicit formula that
will explain the monotonicity.  In \App{Derivatives}, we prove that \Eq{BilliardRotationNumber} satisfies
$\displaystyle\frac{\partial \rhoin}{\partial e}>0$ and $\displaystyle\frac{\partial \rhoin}{\partial f}<0$, see  \Lem{rotgrowth}.
\end{remark}
%%%%%%

Using the formula \Eq{BilliardRotationNumber} and geometrical considerations, one can find polynomial relations between
the eccentricities $e$ and $f$ that correspond to low-order rational rotation numbers.
It is generally difficult, however, to construct such polynomial relations for these level sets.
In the next section we develop a general method to establish the relation between $e$ and $f$ to obtain a
given fixed rotation number.

%%%%%%%%%%%%%%%%%%%%%%%%%%%%%%%%%%%%%%%%%%%%%%%%%%%%%%%%%%%%%
%%% Poristic Ellipses
%%%%%%%%%%%%%%%%%%%%%%%%%%%%%%%%%%%%%%%%%%%%%%%%%%%%%%%%%%%%%
\section{Poristic Confocal Ellipses and Gauss symmetry}\label{sec:Poristic}

The problem of identifying pairs of ellipses for which the rotation number of a Poncelet
map is rational is known as the problem of Poncelet porisms.
In \Sec{PoristicParams} we recall this definition and---for the
the map $B_e^f$---recall some of the explicit polynomials in $(e,f)$ that correspond to low-order rational
values for $\rhoin(e,f)$.
Then in \Sec{ExplicitPorisms} we obtain a general formula for the eccentricity $f$
of the outer ellipse so that the Poncelet map for a given inner ellipse has a specified rotation number.
In \Sec{HalfRho} we use this formula to find a transformation of $f$ that halves the rotation number.
Finally in \Sec{GaussSym} we show that the billiard rotation number has a symmetry that we call \emph{Gauss symmetry}.

%%%%%%%%%%%%%%%%%%%%%%%%%%%%%%%%%%%%%%%%%%%%%%%%%%%%%%%%%%%%
%%% Poristic Parameters
%%%%%%%%%%%%%%%%%%%%%%%%%%%%%%%%%%%%%%%%%%%%%%%%%%%%%%%%%%%%%
\subsection{Poristic Parameters}\label{sec:PoristicParams}
We denote the subset of \Eq{DeltaDefine} with a given rotation number $\ell$ by
\beq{DeltaEll}
	\poris{\ell} = \{ (e,f) \in \Delta : \rhoin(e,f) = \ell\}.
\eeq
%Each level set of the rotation number \Eq{BilliardRotationNumber} gives pairs of ellipses
%that have the same rotation number.
Several such curves were shown in \Fig{ConstantRho}.
The non-empty level sets of $\rhoin^{-1}(\bQ)$ correspond to porisms:

\begin{defn}[Poristic Ellipses]
A pair of confocal ellipses $E(e)$ and $E(f)$ with eccentricities $0 < f < e < 1$
is \textbf{poristic} if the rotation number \Eq{BilliardRotationNumber} of the Poncelet map is rational.
For each $q\in\bQ$, $\poris{q}$ will be called a \textbf{poristic parameter set}.
\end{defn}

Thus it is clear that a poristic pair $(e,f)$ will generate a
closed polygon with vertices on the outer ellipse, i.e., that $(B_e^f)^m$ is the identity for some $m\in\bN$.
The first few poristic sets are given by the polynomials
\begin{align}
	\poris{1/3}&=\{(e,f)\in\Delta:e^2+2 e f^3-2 e f-f^4=0\},\label{eq:OneThird}\\
	\poris{1/4}&=\{(e,f)\in\Delta:e^2+f^4-2 f^2=0\}, \label{eq:OneFourth}\\
	\poris{1/6}&=\{(e,f)\in\Delta:4 e^4 f^2-3 e^4-6 e^2 f^4+4 e^2 f^2+f^8=0\}. \label{eq:OneSixth}
\end{align}
We verify \Eq{OneFourth} below, and  in \App{Porisms}
we verify the set \Eq{OneThird} and find $\Delta_{1/5}$ and $\Delta_{2/5}$.
The set \Eq{OneSixth} is obtained in \Sec{HalfRho}.

%%%%%%
\begin{example}[Period Four Porism] \label{ex:OneFourth}
To show that $\rhoin(e,f) = \tfrac14$ for \Eq{OneFourth}, we substitute the polynomial into \Eq{BilliardRotationNumber}
to obtain
\[
 	\omegain(e,f)|_{\Delta_{1/4}} = \arcsin\left(\frac{1}{\sqrt{2-f^2}}\right)
	 		= \arcsin\left(\frac{1}{\sqrt{1+e'}}\right) ,
\]
with $e' := \sqrt{1-e^2}$. Then using \cite[Form. 110.04]{Byrd1971}
\beq{raro}
%	 F\left(\arcsin \left(\frac1{\sqrt{1+e'}}\right),e\right) =\tfrac12K(e),
	F(\omegain(e,f),e)=\tfrac12K(e),
\eeq
we see that \Eq{BilliardRotationNumber} implies that the rotation number is $\rhoin(e,f)=\tfrac14$.

It is interesting to contrast this with a direct computation of the orbit.
If we let $z_0 = (x_0,y_0) = (1/e, -e'/e)$, $z_1=(-x_0, y_0)$, $z_2=(-x_0, -y_0)$, $z_3=(x_0, -y_0)$,
and $z_4=z_0$, then  \Eq{OneFourth}
implies that $z_k \in E(f)$,
where $E(f)$ is the outer ellipse given by \Eq{Ellipse}.
A direct computation shows that
 each directed segment $\overrightarrow{z_k\,z_{k+1}}$ is tangent
to the ellipse $E(e)$, and has positive orientation.
This implies that the circle map
%$B_e^f:E(f)\to E(f)$ corresponding to the pair of ellipses $E(e)$ and $E(f)$
satisfies $B_e^f(z_k) = z_{k+1}$ so that and $(B_e^f)^4=id$ and $\rhoin(e,f)=\tfrac14$.

\end{example}
%%%%%%

%%%%%%%%%%%%%%%%%%%%%%%%%%%%%%%%%%%%%%%%%%%%%%%%%%%%%%%%%%%%%
%%%% Explicit Porisms
%%%%%%%%%%%%%%%%%%%%%%%%%%%%%%%%%%%%%%%%%%%%%%%%%%%%%%%%%%%%%
\subsection{Explicit Porisms}\label{sec:ExplicitPorisms}

More generally, we can obtain an explicit formula for the
eccentricity $f$ of the outer ellipse for a given caustic $e$ and any given rotation number $\rhoin(e,f)=\ell$.

%%%%%%
\begin{lem}[Explicit porism]\label{lem:porism}
If $(e,f)\in \Delta$, \Eq{DeltaDefine}, and $\ell\in(0,1/2)$,
then $\rhoin(e,f)=\ell$ if and only if
 \beq{poristic}
	f=e\, \sn(K(e)(2\ell +1),e)
 	%=e\frac{\cn(2K(e)\ell,e)}{\dn(2K(e)\ell,e)}
	=e\,\cd(2K(e)\ell,e).
 \eeq
Thus the map $e\mapsto (e, e\,\cd(2K(e)\ell,e))$, for $0<e<1$, parameterizes the
curve $\Delta_\ell$.
\end{lem}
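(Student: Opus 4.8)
The plan is to start from Theorem \ref{thm:BilliardRotationNumber}, which gives
\[
 \rhoin(e,f)=\frac{F(\omegain(e,f),e)}{2K(e)},\qquad
 \omegain(e,f)=\arcsin\sqrt{\frac{e^2-f^2}{e^2(1-f^2)}},
\]
and simply to invert this relation. Setting $\rhoin(e,f)=\ell$ is equivalent to $F(\omegain(e,f),e)=2K(e)\ell$, i.e.\ to
$\omegain(e,f)=\am(2K(e)\ell,e)$, since $\am(\cdot,e)$ is by definition the functional inverse of $u\mapsto F(u,e)$ on the relevant range. Taking the sine of both sides, $\sin\omegain(e,f)=\sn(2K(e)\ell,e)$, so the condition becomes
\[
 \frac{e^2-f^2}{e^2(1-f^2)}=\sn^2(2K(e)\ell,e).
\]
First I would solve this algebraically for $f^2$ in terms of $s:=\sn^2(2K(e)\ell,e)$: clearing denominators gives $e^2-f^2=e^2(1-f^2)s$, hence $f^2(e^2 s-1)=e^2 s-e^2$, so $f^2=e^2\dfrac{1-s}{1-e^2 s}=e^2\dfrac{\cn^2(2K(e)\ell,e)}{\dn^2(2K(e)\ell,e)}=e^2\cd^2(2K(e)\ell,e)$, using $1-\sn^2=\cn^2$ and $1-e^2\sn^2=\dn^2$. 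Taking square roots (both sides positive, which I check below) yields $f=e\,\cd(2K(e)\ell,e)$, the second equality in \Eq{poristic}. The first equality in \Eq{poristic} is then the elliptic-function identity $\cd(2K(e)\ell,e)=\sn(K(e)(2\ell+1),e)$, i.e.\ $\sn(u+K)=\cd(u)$, a standard quarter-period shift formula for the Jacobi functions (e.g.\ \cite[110.04]{Byrd1971}), applied with $u=2K(e)\ell$.

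The remaining work is bookkeeping about ranges, to make sure the inversion $F\mapsto\am$ is legitimate and that the resulting $(e,f)$ genuinely lies in $\Delta$. For $\ell\in(0,1/2)$ we have $2K(e)\ell\in(0,K(e))$, so $\am(2K(e)\ell,e)\in(0,\pi/2)$ and $F$ is a strictly increasing bijection from $(0,\pi/2)$ onto $(0,K(e))$; this makes the passage $F(\omegain,e)=2K(e)\ell\iff\omegain=\am(2K(e)\ell,e)$ an equivalence rather than just an implication, which is what is needed for the ``if and only if''. On this range $\sn(2K(e)\ell,e)\in(0,1)$ and $\cd(2K(e)\ell,e)=\cn/\dn\in(0,1)$ as well (since $0<\cn<\dn\le1$ there), so $f=e\,\cd(2K(e)\ell,e)\in(0,e)$, i.e.\ $(e,f)\in\Delta$; conversely $\omegain(e,f)\in(0,\pi/2)$ exactly when $(e,f)\in\Delta$, closing the loop. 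The final sentence of the lemma is then immediate: as $e$ ranges over $(0,1)$ the map $e\mapsto(e,e\,\cd(2K(e)\ell,e))$ is a parameterization of $\Delta_\ell$, being injective in its first coordinate.

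I do not expect a genuine obstacle here — the statement is essentially an algebraic inversion of a formula already proved. The only mildly delicate point is being careful that every step is reversible so that one gets an ``if and only if'' and not merely one direction; concretely, that amounts to noting that $\am(\cdot,e)$ is a genuine inverse of $F(\cdot,e)$ on $(0,K(e))$ and that the map $f\mapsto(e^2-f^2)/(e^2(1-f^2))$ is a bijection from $(0,e)$ onto $(0,1)$ for fixed $e$, so that solving for $f^2$ loses no solutions. A secondary small point is to cite the correct quarter-period identity $\sn(u+K,e)=\cd(u,e)$ for the first equality in \Eq{poristic}; everything else is routine.
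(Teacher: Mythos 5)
Your proof is correct and takes essentially the same approach as the paper: both invert the formula of Theorem~\ref{thm:BilliardRotationNumber} using the inverse relationship between $F(\cdot,e)$ and $\am(\cdot,e)$ and the identities $1-\sn^2=\cn^2$, $1-e^2\sn^2=\dn^2$. The only difference is that the paper writes out the implication from $f=e\,\cd(2K(e)\ell,e)$ to $\rhoin(e,f)=\ell$ and declares the converse ``similar,'' whereas you carry out the converse direction and are slightly more explicit about why each step is reversible, which is a reasonable way to present the same argument.
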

%%%%%%

\begin{proof}
We will show only one direction. The converse is similar.
To compute $\omegain(e,f)$ using \Eq{BilliardRotationNumber}, note that \Eq{poristic} gives
\begin{align*}
	 \frac{\displaystyle e^2-f^2}{e^2(1-f^2)}
	 	&=\frac{1-\cd^2(u,e)}{1-e^2\cd^2(u,e)}
%	 =\frac{\dn^2(u,e)-\cn^2(u,e)}{\dn^2(u,e)-e^2\cn^2(u,e)}\\
%		=\frac{(1-e^2)\sn^2(u,e)}{1-e^2}
		=\sn^2(u,e),
\end{align*}
where $u = 2K(e)\ell$.
Given that $\ell\in(0,1/2)$, then $0<\am(2K(e)\ell,e)<\frac{\pi}{2}$, and \Eq{BilliardRotationNumber} implies that
\(
	\omegain(e,f)
	%\sqrt{\frac{1-\left(\displaystyle f/e\right)^2}{1-f^2}}
	=\arcsin(\sn(2K(e)\ell,e))
	= \am(2K(e)\ell,e).
\)
This implies $F(\omegain(e,f),e)=2K(e)\ell$, and the rotation number \Eq{BilliardRotationNumber} becomes
$\rhoin(e,f)=\ell$.
\end{proof}

An application of the Gauss transformation of elliptic integrals to the relation \Eq{poristic} is given in \Sec{GaussSym}.

%%%%%%
\begin{remark}\label{rem:DeltaFoliation}
In \Fig{ConstantRho}, we observed that the set $\Delta$ is foliated by the
curves $\Delta_\ell$, for $\ell\in(0,1/2)$. In the figure, each
$\Delta_\ell$ is the graph of a function of $e$ as in \Lem{porism}.
Each of these curves appears to start at $(0,0)$ and end at $(1,1)$. This can be explained
from \Eq{poristic} using the limits
 \[
 	\lim_{e\to 0^+}e\,\cd(2K(e)\ell,e)=0,\qquad
 	\lim_{e\to 1^-}e\,\cd(2K(e)\ell,e)=1 ,
 \]
that are valid for $\ell\in(0,1/2)$. Since that $K(e)\to\infty$ when $e\to 1^- $,
the second limit is non-trivial and requires appropriate asymptotic
analysis, as we see in \App{SingularLimit}.
In addition, we can use \Lem{paramgrowth} to show that
 $\displaystyle\frac{\partial f}{\partial \ell}<0$ and $\displaystyle\frac{\partial f}{\partial e}>0$.
More details can be found in \App{Derivatives}.
\end{remark}
%%%%%%

%%%%%%%%%%%%%%%%%%%%%%%%%%%%%%%%%%%%%%%%%%%%%%%%%%%%%%%%%%%%
%%% Half Rotation number
%%%%%%%%%%%%%%%%%%%%%%%%%%%%%%%%%%%%%%%%%%%%%%%%%%%%%%%%%%%%%
\subsection{Half-Rotation Number}\label{sec:HalfRho}

The explicit poristic formula in Lemma \ref{lem:porism} can be used with double-angle formulas for
Jacobi elliptic functions to find a transformation of
$(e,f) \in \Delta$ that will halve the rotation number.

%%%%%%
\begin{lem}[Half-rotation]\label{lem:HalfRotation}
 Given $\ell\in(0,1/2)$,
 $0<e<1$ and $f_0=e\,\cd(2K(e)\ell,e)$, define
 \beq{HalfRotation}
% 	f_1=\sqrt{\frac{e \left(e+f_0\right)}{1+e f_0+e' f_0'}}\, ,
f_1=\sqrt{\frac{e \left(e+f_0\right)}{1+e f_0+s}},\, \quad
	f_2=\sqrt{\frac{e \left(e-f_0\right)}{1-e f_0+s}},\, \quad s \equiv \sqrt{(1-e^2)(1-f_0^2)}.
 \eeq
% where $e'=\sqrt{1-e^2 }$, and $f_0'=\sqrt{1-f_0^2}$.
Then
% the new eccentricity pairs satisfy $(e,f_1),(e,f_2)\in\Delta$. Moreover,
$f_1=e\,\cd(K(e)\ell,e)$, $f_2=e\,\cd(K(e)(1-\ell),e)$, and
\[
	\rhoin(e,f_1)=\tfrac12\rhoin(e,f_0), \qquad \rhoin(e,f_2)=\tfrac12-\tfrac12\rhoin(e,f_0),
\]
so that when $(e,f_0)\in\Delta_\ell$, then $(e,f_1)\in\poris{\ell/2}$, and $(e,f_2)\in\poris{1/2-\ell/2}$.
\end{lem}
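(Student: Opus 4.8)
The strategy is to reduce everything to the already-established explicit porism formula \Eq{poristic} in \Lem{porism} together with the standard double-argument (half-argument) identities for Jacobi elliptic functions. By \Lem{porism}, the claim $\rhoin(e,f_1)=\tfrac12\rhoin(e,f_0)$ is equivalent to $f_1 = e\,\cd(K(e)\ell,e)$, and similarly $\rhoin(e,f_2)=\tfrac12-\tfrac12\rhoin(e,f_0)$ is equivalent to $f_2 = e\,\cd(K(e)(1-\ell),e)$; so once I verify those two closed-form identities, the statements about rotation numbers and about $(e,f_1)\in\poris{\ell/2}$, $(e,f_2)\in\poris{1/2-\ell/2}$ follow immediately. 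Thus the entire content is: express $\cd(K(e)\ell,e)$ in terms of $\cd(2K(e)\ell,e) = f_0/e$, and likewise for $\cd(K(e)(1-\ell),e)$.

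\textbf{Key steps.} First I set $u = K(e)\ell$, so $2u = 2K(e)\ell$ and $f_0/e = \cd(2u,e)$. I would start from the half-argument formulas for $\sn$, $\cn$, $\dn$ (e.g., \cite[Sec.\ 124]{Byrd1971}), which give $\sn^2(u,e)$, $\cn^2(u,e)$, $\dn^2(u,e)$ as rational expressions in $\cn(2u,e)$ and $\dn(2u,e)$, with the auxiliary quantity $\sqrt{\dn(2u,e)}$-type terms reorganizing into $s = \sqrt{(1-e^2)(1-f_0^2)}$ once one uses $\cn^2(2u,e) = 1-\sn^2(2u,e)$ and the $\cd$-to-$\sn$ conversion $\sn^2(2u,e) = \tfrac{1-\cd^2(2u,e)}{1-e^2\cd^2(2u,e)}$ that already appeared in the proof of \Lem{porism}. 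Concretely, $\cd^2(u,e) = \cn^2(u,e)/\dn^2(u,e)$, and after substituting the half-argument formulas and simplifying I expect to land exactly on $\cd^2(u,e) = \dfrac{e+f_0}{e(1+ef_0+s)}$, i.e.\ $f_1^2 = e^2\cd^2(u,e)$, which is \Eq{HalfRotation}. For the $f_2$ identity I would use the complementary-argument shift $\cd(K(e)(1-\ell),e) = \cd(K(e)-u,e)$ and the addition-theorem values at the quarter period $K(e)$ — recalling $\sn(K-u) = \cd(u,e) = \cn(u)/\dn(u)$-type relations — reducing it to the same half-argument computation with $f_0 \mapsto -f_0$ (equivalently, replacing $\ell$ by $1-\ell$ flips a sign in $\cd(2u,e)$). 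Finally I should note the sign/branch bookkeeping: for $\ell\in(0,1/2)$ one has $u\in(0,K(e))$, so $\cn(u,e)>0$ and $\dn(u,e)>0$, hence $\cd(u,e)>0$ and the positive square roots in \Eq{HalfRotation} are the correct branch; and $e\,\cd(u,e) < e < 1$ guarantees $(e,f_1)\in\Delta$ so \Lem{porism} applies, with $\rhoin(e,f_1) = \ell/2 \in (0,1/4)\subset(0,1/2)$ as required, and similarly $\rhoin(e,f_2) = (1-\ell)/2 \in (1/4,1/2)$.

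\textbf{Main obstacle.} The only real work is the algebraic reduction of the half-argument formulas to the compact radical form \Eq{HalfRotation}; the delicate point is matching the auxiliary square root appearing in Byrd--Friedman's half-argument identities (a term of the form $\sqrt{\dn(2u,e)}$, or equivalently something built from $1+\dn(2u,e)$) to the symmetric quantity $s = \sqrt{(1-e^2)(1-f_0^2)}$. I expect this to work out because $\dn^2(2u,e) = 1-e^2\sn^2(2u,e)$ and, under the $\cd$ parameterization $\sn^2(2u,e) = \tfrac{1-(f_0/e)^2}{1-f_0^2}$ (from the proof of \Lem{porism}, with $f_0 = e\,\cd(2u,e)$), one computes $\dn^2(2u,e) = \tfrac{(1-e^2)(1-f_0^2/e^2)\cdot e^2 + \cdots}{\cdots}$, which simplifies to an expression whose numerator is a perfect square involving exactly $(1-e^2)(1-f_0^2)$; tracking the denominators carefully through the half-argument substitution then yields the stated forms for $f_1$ and $f_2$. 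None of this is conceptually hard, but it is the step where a sign error or a mis-transcribed Byrd--Friedman formula would derail the argument, so I would carry it out explicitly and cross-check numerically at, say, $e = \sqrt{3}/2$, $\ell = 1/5$, against the set $\Delta_{1/10}$ (cf.\ \Eq{Cayley10} and \Fig{ConstantRho}).
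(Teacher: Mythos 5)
Your proposal is correct and takes essentially the same route as the paper: reduce via \Lem{porism} to the identities $f_1 = e\,\cd(K(e)\ell,e)$ and $f_2 = e\,\cd(K(e)(1-\ell),e)=e\,\sn(K(e)\ell,e)$, then establish these from the Byrd--Friedman double/half-argument identities, with the same positivity and branch bookkeeping on $(0,K(e))$. The paper's version of the algebra inverts the double-angle formula $\cd(2K(e)\ell,e)=(u-v)/(1-e^2uv)$ with $u=\cd^2(K(e)\ell,e)$ and $v=\sn^2(K(e)\ell,e)$ and solves for $u$ and $v$, which is algebraically equivalent to the direct half-argument substitution you outline.
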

%%%%%%
%%%%%%
\begin{proof} Using \Eq{poristic}, the double angle formulas for Jacobi elliptic functions
\cite[Form.124.01]{Byrd1971} give
 \beq{half1}
 	\frac{f_0}{e}= \cd(2K(e)\ell,e)=\frac{u-v}{1-e^2\,uv},
 \eeq
 where $u=\cd^2(K(e)\ell,e)$ and
 \beq{half2}
 	v= \sn^2(K(e)\ell,e) = \frac{1-\cd^2(K(e)\ell,e)}{1-e^2\,\cd^2(K(e)\ell,e)}=\frac{1-u}{1-e^2u}.
 \eeq
 Using \Eq{half1} and \Eq{half2} we find that
\beq{uveq}
	\frac{f_0}{e}=\frac{-e^2 u^2+2 u-1}{e^2 u^2 -2 e^2 u+1}=\frac{e^2 v^2-2 v+1}{e^2 v^2 -2 e^2 v+1} ,
\eeq
which can be solved for $u$ to give
 \[
% 	u=\frac{1+e f_0-\sqrt{\left(1-e^2\right) \left(1-f_0^2\right)}}{e \left(e+f_0\right)}
% 	 =\frac{1+e f_0- e' f_0'}{e \left(e+f_0\right)}
%	 =\frac{e+f_0}{e \left(1+e f_0+ e' f_0'\right)}.
%	u=\frac{1+e f_0-\sqrt{\left(1-e^2\right) \left(1-f_0^2\right)}}{e \left(e+f_0\right)}
 	u =\frac{1+e f_0- s}{e \left(e+f_0\right)}
	 =\frac{e+f_0}{e \left(1+e f_0+ s\right)}.
\]
Since $\ell\in(0,1/2)$, then $\cd(K(e)\ell,e)>0$, so that by \Eq{HalfRotation},
$
	f_1 = e\sqrt{u} =  e\,\cd(K(e)\ell,e).
	%=e\sqrt{\frac{e+f_0}{e \left(1+e f_0+ s\right)}}=f_1.
$
% Clearly, this implies that $0<f_1<e$ and hence $(e,f_1)\in\Delta$.
% Finally, Lemma \ref{lem:porism} then implies that $\rhoin(e,f_1)=\tfrac12\rhoin(e,f_0).$

Similarly, solving \Eq{uveq} for $v$, gives
 \[
 	v =\frac{1-e f_0- s}{e \left(e-f_0\right)}
	 =\frac{e-f_0}{e \left(1-e f_0+ s\right)}.
\]
%If we substitute $\ell\mapsto1-\ell$, we find that $\cd(2K(e)(1-\ell),e)=-\cd(2K(e)\ell,e)=-f_0/e$.
Since $\ell\in(0,1/2)$, $\sn(K(e)\ell,e) > 0$, %=\sqrt{v}$
so that by \Eq{HalfRotation},
$
	f_2 = e\sqrt{v}= e\, \sn(K(e)\ell,e) = e\,\cd(K(e)(1-\ell),e).
	% =e\sn(K(e)\ell,e)
	%=e\sqrt{\frac{e-f_0}{e \left(1-e f_0+ s \right)}}=f_2.
$
These results clearly imply that $0<f_1,f_2<e$ and hence $(e,f_1),(e,f_2)\in\Delta$.
Finally, Lemma \ref{lem:porism} implies that $\rhoin(e,f_1)=\tfrac12\rhoin(e,f_0)$,
and $\rhoin(e,f_2) = \tfrac12- \tfrac12\rhoin(e,f_0)$.
\end{proof}
%%%%%%

%%%%%%
\begin{example}
An example of the application of \Lem{HalfRotation} is shown in \Fig{halfrotnew}.
Panel (a) shows a pair $E(e)$ and $E(f_0)$ with $(e,f_0)\in \poris{2/7}$.
Using \Eq{HalfRotation}, gives a new outer ellipse in panel (b) with
%find $(e,f_1)=(0.8, 0.750466)$, which gives $\rhoin(e,f_1)=1/7$ and
%$(e,f_1)\in \poris{1/7}$. Similarly,
$\rhoin(e,f_2) = \tfrac12-\tfrac17 = \tfrac{5}{14}$.
\end{example}
%%%%%%

%%%%%%%
\InsertFigTwo{double3}{double4}{An example of the half-rotation formulas \Eq{HalfRotation}.
(a) A pair of poristic ellipses $E(e),E(f_0)$ with $(e,f_0) = (0.8, 0.572851)$, and $\rhoin(e,f_0)=2/7$.
(b) Mapping $(e,f_0)$ to $(e,f_2)=(0.8, 0.419316)$, gives
a pair $E(e),E(f_2)$ that is still poristic with
$\rhoin(e,f_2)=5/14$. The dashed ellipses $E(f_2)$ in (a) and $E(f_0)$ in (b) are drawn for comparison.
%The numbers used are$e_0=0.6$, $f_0=0.204409$, $e_1=0.968246$, and $f_1= 0.493421$.
}{halfrotnew}{2.2in}
%%%%%%

%%%%%%
\begin{example}
The half-rotation formula \Eq{HalfRotation}, in conjunction with \Eq{OneThird} immediately gives \Eq{OneSixth}.
%\beq{OneSixth}
% \poris{1/6}=\{(e,f)\in\Delta:4 e^4 f^2-3 e^4-6 e^2 f^4+4 e^2 f^2+f^8=0\}.
%\eeq
\end{example}
%%%%%%

%%%%%%%%%%%%%%%%%%%%%%%%%%%%%%%%%%%%%%%%%%%%%%%%%%%%%%%%%%%%%
%%%% Gauss
%%%%%%%%%%%%%%%%%%%%%%%%%%%%%%%%%%%%%%%%%%%%%%%%%%%%%%%%%%%%%
\subsection{Gauss Symmetry}\label{sec:GaussSym}

In this section we obtain a symmetry for the rotation number \Eq{BilliardRotationNumber} and show that
any point $(e,f)$ with a given rotation number can be transformed into an infinite number of
such points with the same rotation number.

Landen and Gauss discovered some useful transformations of elliptic integrals
which can be used to obtain a symmetry of the billiard rotation number.
We will call this map \emph{Gauss symmetry.}

%%%%%%
\begin{lem}[Gauss symmetry]\label{lem:GaussSym}
 Define the function $\fg: \bR^+ \to (0,1]$ by
\[
	 \fg(e)=\frac{2\sqrt{e}}{1+e}.
\]
If $G:\Delta \to\Delta$ is the transformation
\beq{trans}
	G(e,f)=\left(\fg(e), \fg\left(f^{2}\cdot e^{-1}\right)\right)
		 =\left(\frac{2 \sqrt{e}}{1+e},\frac{2 \sqrt{e} f}{f^2+e}\right),
\eeq
then the rotation number $\rhoin$ given by \Eq{BilliardRotationNumber} is invariant under $G$; i.e., $\rhoin\circ G=\rhoin$.
\end{lem}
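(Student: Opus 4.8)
The plan is to reduce the invariance $\rhoin \circ G = \rhoin$ to the explicit parameterization of the level sets $\Delta_\ell$ given in \Lem{porism}. Since \Lem{porism} shows that a pair $(e,f)\in\Delta$ has rotation number $\ell\in(0,\tfrac12)$ if and only if $f = e\,\cd(2K(e)\ell,e)$, it suffices to show that if $f = e\,\cd(2K(e)\ell,e)$, then the transformed pair $G(e,f) = (\fg(e), \fg(f^2 e^{-1}))$ again satisfies the same relation with the \emph{same} $\ell$: that is, writing $\tilde e = \fg(e) = 2\sqrt{e}/(1+e)$ and $\tilde f = 2\sqrt{e}\,f/(f^2+e)$, I want $\tilde f = \tilde e\,\cd(2K(\tilde e)\ell, \tilde e)$. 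Because the level sets $\Delta_\ell$, $\ell\in(0,\tfrac12)$, foliate $\Delta$ (so every $(e,f)\in\Delta$ lies on exactly one of them, by \Rem{DeltaFoliation}), establishing this identity for all $\ell$ proves $\rhoin\circ G = \rhoin$ everywhere on $\Delta$.

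The key computational input is the ascending Landen (Gauss) transformation of the elliptic modulus: if $k_1 = \fg(k) = 2\sqrt{k}/(1+k)$, then $K(k_1) = (1+k)K(k)$, and the incomplete integral transforms compatibly, so that the Jacobi functions satisfy a doubling-type relation linking modulus $k$ at ``time'' $u$ to modulus $k_1$ at ``time'' $(1+k)u$. Concretely I would invoke the standard Gauss transformation formulas (e.g. \cite[Sec.~164]{Byrd1971}) expressing $\cd(\,\cdot\,, k_1)$, or equivalently $\sn,\cn,\dn$ at modulus $k_1$, in terms of functions at modulus $k$; applied at the point $u = 2K(k)\ell$, the relation $K(k_1)=(1+k)K(k)$ means $2K(k_1)\ell = (1+k)\cdot 2K(k)\ell$ is exactly the transformed argument, so the modular transformation formula evaluated there should convert $e\,\cd(2K(e)\ell,e)$ into precisely $\fg(f^2 e^{-1})$ after the substitution $f = e\,\cd(2K(e)\ell,e)$. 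This is a finite manipulation with the half/double-argument and Landen identities for Jacobi elliptic functions, of the same flavor as the computation already carried out in \Lem{HalfRotation}.

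Alternatively — and this may be cleaner to present — I would work directly at the level of the incomplete elliptic integral $F(\omega,e)$ via \Eq{BilliardRotationNumber}. Set $\omega = \omegain(e,f)$. The ascending Landen transformation states that $F(\omega_1, e_1) = (1+e)\,F(\omega,e)$ for $e_1 = \fg(e)$ and a suitable new amplitude $\omega_1 = \omega_1(\omega,e)$ determined by $\sin\omega_1 = (1+e)\sin\omega/(1 + e\sin^2\omega)$ (or the analogous standard relation). Then
\[
	\rhoin(e_1, f_1) = \frac{F(\omega_1,e_1)}{2K(e_1)} = \frac{(1+e)F(\omega,e)}{2(1+e)K(e)} = \frac{F(\omega,e)}{2K(e)} = \rhoin(e,f),
\]
provided one checks the single algebraic identity that $\omega_1(\omegain(e,f),e) = \omegain\big(\fg(e), \fg(f^2 e^{-1})\big)$, i.e. that the Landen amplitude transformation carries $\arcsin\sqrt{(e^2-f^2)/(e^2(1-f^2))}$ to $\arcsin\sqrt{(e_1^2 - f_1^2)/(e_1^2(1-f_1^2))}$ with $(e_1,f_1) = G(e,f)$. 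This reduces the whole lemma to one explicit algebraic verification, with the modular behavior of $F$ and $K$ under Gauss's transformation supplying everything else.

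The main obstacle is bookkeeping: pinning down the precise form of the amplitude transformation $\omega \mapsto \omega_1$ in the ascending Landen formula (the classical references state it in several equivalent guises, and sign/branch conventions matter since $0<\omega<\pi/2$), and then verifying the resulting algebraic identity after substituting the rather asymmetric formula $f_1 = 2\sqrt{e}f/(f^2+e)$. I expect this to be routine but delicate; the cleanest route is likely to verify the $\cd$-parameterization identity of the first paragraph, since it can be checked using the same Landen/double-angle toolkit already deployed for \Lem{HalfRotation} and avoids carrying an auxiliary amplitude variable. One should also note at the end that $G$ does map $\Delta$ into $\Delta$ — i.e. $0 < f_1 < e_1 < 1$ — which follows from $0<f<e<1$ and monotonicity of $\fg$; this is needed for the statement to make sense and is straightforward.
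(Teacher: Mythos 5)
Your ``alternative'' paragraph is essentially the paper's own proof: the paper sets $\omega_1=\omegain(e,f)$, $\omega_2=\omegain(G(e,f))$, verifies the amplitude identity $\sin\omega_2\,(1+e\sin^2\omega_1)=(1+e)\sin\omega_1$ by direct computation (this is exactly the Landen amplitude relation you wrote, in slightly different notation), invokes the Gauss transformation $F(\omega_2,\fg(e))=(1+e)F(\omega_1,e)$ together with $K(\fg(e))=(1+e)K(e)$, and cancels the factors of $(1+e)$ in the quotient defining $\rhoin$. So the second route you sketch is the right one, and the single algebraic check you flag --- that the Landen amplitude map carries $\omegain(e,f)$ to $\omegain(G(e,f))$ --- is precisely the one-line identity the paper records. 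Your first route, reducing to the $\cd$-parameterization of \Lem{porism} and transporting it through the modulus change $e\mapsto\fg(e)$, would also work but is more circuitous: it routes through the Jacobi-function form of the level sets and the full ascending Landen identities for $\sn,\cn,\dn$, whereas the paper's (and your alternative) argument never leaves the incomplete integral $F$ and needs only the amplitude transformation and the behavior of $K$. Your closing remark that one should verify $G(\Delta)\subset\Delta$ is a reasonable point of hygiene; the paper does not address it explicitly, but as you note it follows from $0<f<e<1$ and monotonicity of $\fg$.
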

%%%%%%

\begin{proof}
Fix eccentricities $0<f<e<1$. Using \Eq{BilliardRotationNumber}, we define $\omega_1=\omegain(e,f)$ and
$\omega_2=\omegain(G(e,f))$. Clearly $0<\omega_1,\omega_2<\tfrac{\pi}{2}$,
and a direct computation shows that
\[
	\sin(\omega_2)(1+e\sin^2\omega_1)=(1+e)\sin\omega_1.
\]
The Gauss transformation %\cite[Formula 164.02]{Byrd1971},
\cite[Formula 8.125.3]{Gradshteyn65}
of the elliptic integral \Eq{EllipticIntegral} then implies that
 \[
 	F(\omega_2, \fg(e)) = \int_0^{\omega_2}\frac{d\,\tau}{\sqrt{1-\fg(e)^2\sin^2\tau}}
 	 % = (1+e)\int_0^{\omega_1}\frac{d\,\tau}{\sqrt{1-e^2\sin^2\tau}}
	 = (1+e) F(\omega_1,e).
 \]
 In particular, we note that $K(\fg(e)) = F(\tfrac{\pi}{2},\fg(e))=(1+e)F(\tfrac{\pi}{2},e) = (1+e)K(e)$.

 Using \Eq{BilliardRotationNumber}, we conclude that
 \[
 	\rhoin(G(e,f))=\frac12\,\frac{F(\omega_2,\fg(e))}{K(\fg(e))}
		=\frac12\,\frac{(1+e)\,F(\omega_1,e)}{(1+e)\,K(e)}= \rhoin(e,f),
\]
which is the promised result.
\end{proof}

As a consequence of this Gauss symmetry, given $(e,f)\in\Delta$, the orbit of
$G$ gives an infinite number of points in $\Delta$ with the same rotation number.
An example of the application of \Eq{trans} is shown in \Fig{gausssym}.
Panel (a) shows a pair of ellipses and an orbit for $\rhoin(e_0,f_0) = \tfrac15$. Under $G$,
this transforms to eccentricities $(e_1,f_1)$, as shown in panel (b),
with the same rotation number.
%Thus both cases lie in the set $\poris{1/5}$.
%= (0.6, 0.503246)$, then $(e_0,f_0)\in \poris{1/5}$.
%Gauss symmetry implies that $(e_1,f_1)= G(e_0,f_0) = (0.968246, 0.913706)$ is also an element of

%%%%%%%
\InsertFigTwo{gauss5}{gauss6}{An example of Gauss symmetry with $\rho = \tfrac15$.
(a) A pair of ellipses $E(e_0),E(f_0)$ with $(e_0,f_0) = (0.6, 0.503246)$,
and an orbit of the resulting Poncelet map.
(b) Gauss symmetry transforms $(e_0,f_0)$ to $(e_1,f_1)= G(e_0,f_0) = (0.968246, 0.913706)$, giving
a pair $E(e_1),E(f_1)$ that still has
$\rhoin(e_1,f_1)=\tfrac15$. The dashed ellipse is $E(f_0)$, drawn for comparison.
%The numbers used are$e_0=0.6$, $f_0=0.204409$, $e_1=0.968246$, and $f_1= 0.493421$.
}{gausssym}{2.2in}
%%%%%%
%
%\begin{example}
%The transformation \Eq{trans}, allows us to write the rotation number \Eq{BilliardRotationNumber} in different,
%equivalent forms. For instance, it is easy to see using \Eq{BilliardRotationNumber} that
%\[
%	\omega\circ G(e,f):=\arccos\left(\frac{(1-e) f}{e-f^2}\right).
%\]
%Hence, the rotation number can be written as:
%\begin{align*}
%	\rhoin(e,f)&= \frac{F(\omega\circ G(e,f),\fg(e))}{2 K(\fg(e))}
%			 =\frac1{2K(\fg(e))}F\left(\arccos\left(\frac{(1-e) f}{e-f^2}\right),\fg(e)\right) ,\\
%			&= \frac1{2(1+e)K(e)}F\left(\arccos\left(\frac{(1-e) f}{e-f^2}\right),\fg(e)\right)
%\end{align*}
%\comment{Denom. simplifies. Right? Although not completely....}
%\end{example}

Note that an immediate consequence of \Lem{GaussSym} is the following.

%%%%%%
\begin{cor} The poristic set $\poris{\ell}$ \Eq{DeltaEll} is invariant under $G$: if $(e,f) \in \poris{\ell}$,
then so is $G(e,f)$.
\end{cor}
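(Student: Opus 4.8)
The plan is to deduce this corollary immediately from \Lem{GaussSym}, which asserts that $\rhoin\circ G=\rhoin$ on $\Delta$. Fix $\ell\in\bQ$ and suppose $(e,f)\in\poris{\ell}$, which by definition \Eq{DeltaEll} means $(e,f)\in\Delta$ and $\rhoin(e,f)=\ell$. First I would note that $G$ maps $\Delta$ into $\Delta$, so that $G(e,f)\in\Delta$; this is part of the statement of \Lem{GaussSym}. Then applying the invariance $\rhoin\circ G=\rhoin$, we get $\rhoin(G(e,f))=\rhoin(e,f)=\ell$. Hence $G(e,f)\in\poris{\ell}$, which is exactly the claim.

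In short, the proof is a one-line consequence: $\rhoin(G(e,f))=\rhoin(e,f)=\ell$, so $G(\poris{\ell})\subseteq\poris{\ell}$. There is essentially no obstacle here — all the work was done in \Lem{GaussSym}, where the Gauss transformation of the incomplete elliptic integral (via the identity $F(\omega_2,\fg(e))=(1+e)F(\omega_1,e)$ together with $K(\fg(e))=(1+e)K(e)$) was used to establish $\rhoin\circ G=\rhoin$. The only point worth a brief remark is that rationality of $\ell$ plays no role in the argument: the same computation shows every level set $\poris{\ell}$ with $\ell\in(0,\tfrac12)$, rational or not, is $G$-invariant; the corollary simply specializes to $\ell\in\bQ$, the poristic case, because those are the level sets named in the definition.

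If a slightly more self-contained phrasing is wanted, I would write: let $(e,f)\in\poris{\ell}$; by \Lem{GaussSym}, $G(e,f)\in\Delta$ and $\rhoin(G(e,f))=\rhoin(e,f)$; since $\rhoin(e,f)=\ell$ by hypothesis, $\rhoin(G(e,f))=\ell$, i.e.\ $G(e,f)\in\poris{\ell}$. This completes the proof. Because $G$ is invertible on $\Delta$ with inverse also preserving $\rhoin$ (by the same lemma applied to $G^{-1}$, or by symmetry of the construction), one in fact gets $G(\poris{\ell})=\poris{\ell}$, though the stated corollary only requires the inclusion.
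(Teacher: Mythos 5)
Your proof is correct and is exactly the one-line deduction the paper intends — the paper itself gives no proof, stating only that the corollary is an immediate consequence of \Lem{GaussSym}, and your argument ($\rhoin(G(e,f))=\rhoin(e,f)=\ell$ with $G(e,f)\in\Delta$) spells out precisely that deduction. Your remark that rationality of $\ell$ is irrelevant is a reasonable observation and consistent with the paper's own definition of $\poris{\ell}$ for arbitrary $\ell$.
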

%%%%%%

In particular, if a polynomial expression---like that in \Eq{OneThird}---defines
a poristic parameter set, the polynomial must be invariant under the symmetry.

%%%%%%%
%\begin{example}[Gauss Symmetry of Porisms]
%We can see explicitly see that \Eq{poristic} is consistent with \Lem{GaussSym}.
%Indeed, if $(e_1,f_1) = G(e_0,f_0)$ and
%$f_0=e_0\cd(2K(e_0)\ell,e_0)$, then $f_1=e_1\cd(2K(e_1)\ell,e_1)$.
%To see this, let $u_0 = 2K(e_0)\ell$, and use \Eq{GaussMap} and standard
%identities for elliptic functions to obtain
%\[
%	 \cd(2K(\fg(e_0))\ell,\fg(e_0))=
%	\frac{\cn(u_0,e_0)\dn(u_0,e_0)}{1-e_0\sn^2(u_0,e_0)}.
%\]
%In addition, using \Eq{poristic}, we find that
%%\begin{align*}
%% 	f_0^2+e_0 &= e_0^2\cd(u_0,e_0)^2+e_0\\
%%	 &= e_0\left(\frac{e_0\cn^2(u_0,e_0)+\dn^2(u_0,e_0)}{\dn^2(u_0,e_0)}\right) \\
%%	 &= \frac{e_0(1+e_0)(1-e_0\sn^2(u_0,e_0))}{\dn^2(u_0,e_0)},
%%\end{align*}
%\[
% 	f_0^2+e_0
%	 = e_0\left(\frac{e_0\cn^2(u_0,e_0)+\dn^2(u_0,e_0)}{\dn^2(u_0,e_0)}\right)
%	 = \frac{e_0(1+e_0)(1-e_0\sn^2(u_0,e_0))}{\dn^2(u_0,e_0)},
%\]
% which then implies
% \[
% 	\fg(f_0^{2}e_0^{-1})= \frac{2 \sqrt{e_0} f_0}{f_0^2+e_0}
% 		=\frac{2\sqrt{e_0} \cd(u_0,e_0)\dn^2(u_0,e_0)}{(1+e_0)(1-e_0\sn^2(u_0,e_0))}=
% 		\fg(e_0) \cd(2K(\fg(e_0))\ell,\fg(e_0))=f_1.
% \]
%Thus as \Lem{GaussSym} implies, both $(e_0,f_0)$ and $(e_1,f_1)\in\Delta_\ell$.
%\end{example}
%%%%%%

%%%%%%%%%%%%%%%%%%%%%%%%%%%%%%%%%%%%%%%%%%%%%%%%%%%%%%%%%%%%%
%%% Pencils
%%%%%%%%%%%%%%%%%%%%%%%%%%%%%%%%%%%%%%%%%%%%%%%%%%%%%%%%%%%%%
\section{Pencils of Ellipses}\label{sec:EllipticPencils}

To prepare for \Sec{PonceletMaps}, where we will generalize the results of \Sec{BilliardMaps}
for confocal ellipses to a Poncelet map between any pair of nested ellipses,
we develop in this section some notation and coordinate transformations.

In \Sec{Standing} we recall the definition of a \emph{pencil} as the family of
linear combinations of the implicit equations for a pair of ellipses.
For our analysis, we will fix the outer ellipse $\cC_o$, and choose an element $\cC_\bnu$
of the pencil that is in its interior. We will show in \Sec{Diagonalization} that there is
a linear transformation of any pencil to a diagonal form that is characterized by three eigenvalues.

For each $\bnu$, the corresponding Poncelet map $P_\bnu$ \Eq{PencilPoncelet} is the map that uses
$\cC_i=\cC_\bnu$ as the interior ellipse.
In \Sec{PencilE} we obtain a key invariant that we call the \emph{eccentricity of the pencil}.
The dynamics of any of the Poncelet maps $P_\bnu$ can be characterized in terms of Jacobi elliptic functions that
have moduli give by this eccentricity.
In \Sec{Projective}, we will show that any pencil can be transformed,
by a projective transformation, to a standard pencil for which the outer ellipse becomes the unit circle and the inner ellipse is centered at the origin with its
principle axes aligned with the coordinate axes.
This transformation can also be applied to the Poncelet map, giving a conjugate map.

%%%%%%%%%%%%%%%%%%%%%%%%%%%%%%%%%%%%%%%%%%%%%%%%%%%%%%%%%%%%%
%%%% Definitions
%%%%%%%%%%%%%%%%%%%%%%%%%%%%%%%%%%%%%%%%%%%%%%%%%%%%%%%%%%%%%
\subsection{Standing Assumptions}\label{sec:Standing}

A conic $\cC$ in the plane can be represented by a real, $3\times 3$ symmetric matrix $C$---using
homogeneous coordinates---as the set \Eq{QuadraticForm}. In order that $\cC$ be an ellipse we assume
that the upper-left $2 \times 2$ minor of $C$ is positive definite.\footnote
%%%%%
{Of course, we could replace $C$ by $-C$ to obtain these signs if necessary.}
%%%%%
 i.e., $\bw^T C \bw > 0$, whenever  $\bw = (x,y,0)^T \neq \mathbf{0}$.
That is, we will assume that the quadratic terms of \Eq{QuadraticForm} are positive definite.
As a consequence, to have a nonempty $\cC$, the
full $3\times 3$ matrix $C$ must be indefinite, thus $\det{C} < 0$.
We will refer to matrices with these properties as having the \emph{ordered} signature $(+,+,-)$.
In this case it is clear that the interior of the ellipse \Eq{QuadraticForm} is the
set of points $(x,y)\in\bR^2$ such that $\bu^T C \bu < 0$, where $\bu=(x,y,1)$.

If $C_1$ and $C_2$ are two symmetric matrices, with corresponding
conics $\cC_1 $ and $\cC_2$, respectively, the associated pencil $\cC_\bnu=\cC_{(\nu_1,\nu_2)}$ is the family \Eq{PencilSet}
 for $\bnu =(\nu_1,\nu_2)\in \bR^2$.
%generated by the linear combinations of the symmetric matrices:
%\beq{PencilSet}
%	\cC_{\bnu} = \left\{ (x,y) \in \bR^2 : \bu^T ( \nu_1C_1 + \nu_2 C_2) \bu = 0 \,,\, \bu = (x,y,1)^T \right\} .
%\eeq
The interior of $\cC_{\bnu}$ is
%\beq{PencilSetInterior}
%	\interior(\cC_{\bnu}) = \left\{ (x,y) \in \bR^2 : \bu^T ( \nu_1C_1 + \nu_2 C_2) \bu < 0 \,,\, \bu = (x,y,1)^T \right\} .
%\eeq
\[
	\interior(\cC_{\bnu}) = \left\{ (x,y) \in \bR^2 : \bu^T ( \nu_1C_1 + \nu_2 C_2) \bu < 0 \,,\, \bu = (x,y,1)^T \right\} .
\]
An example is sketched in \Fig{pencil}.
Note that $\cC_\bnu =\cC_{\bmu}$ if the vectors $\bnu$ and $\bmu$ are non-zero and parallel, i.e.,
%\beq{ProjectivePencil}
$
	\cC_{\delta \bnu} = \left\{ \delta \bu^T ( \nu_1C_1 + \nu_2 C_2) \bu = 0 \right\} = \cC_\bnu,
$
%\eeq
for any $\delta \neq 0$. Thus the pencil is defined on the projective line $\bnu \in \bP^1$.
 We will restrict to parameters for which $\cC_\bnu\subset \interior(\cC_1)$.

%%%%%%%%
\InsertFig{pencil}
{A pencil of ellipses generated by an outer ellipse $\cC_1$ and an inner ellipse $\cC_2$.
 Each pair of ellipses of the pencil will generate a Poncelet map. }
{pencil}{3 in}
%%%%%%%%%

For the analysis below, we make the following assumptions on
the matrices in the pencil \Eq{PencilSet}:
 \begin{enumerate}[label={(SA\arabic*)}]
 \item\label{ass:SA1}The symmetric $3 \times 3$ matrices $C_1$ and $C_2$ have ordered signature $(+,+,-)$:
 their upper-left $2\times 2$ minors are positive definite, and $\det{C_i} < 0$. In particular,
 \beq{SA1}
 	(x,y,0)C_i(x,y,0)^T>0, \quad \mbox{for all } (x,y)\neq(0,0) \;.
\eeq
 \item\label{ass:SA2}
 	The matrix $C_1^{-1}C_2$ %is non-degenerate in the sense that it
 has three positive,
	distinct eigenvalues.
\item\label{ass:SA3}
	The ellipse $\cC_2$ is inside $\cC_1$; that is, whenever $\bu \neq 0$
	and $\bu^TC_2\bu =0$, then $\bu^TC_1\bu < 0$.
 \end{enumerate}

%%%%%%%
\begin{remark}
 The concept of \emph{generalized eigenvalues of a pencil}, discussed in \cite{Parlett1998,Stewart90} is related
 to our construction.
 Given a pair of matrices $C_1,C_2$, the generalized eigenvalues of the pencil generated by
 $(C_1,C_2)$, are the roots of the polynomial $\det(\lambda \,C_1-C_2)$. Since $\ref{ass:SA1}$ implies that
 $C_1$ is invertible, the eigenvalues mentioned in $\ref{ass:SA2}$ are the generalized eigenvalues of the pencil.
\end{remark}
%%%%%%%

%%%%%%%
\begin{remark}\label{rem:trickSA3}
 Standing assumption \ref{ass:SA3} follows because by \Eq{SA1}, whenever $\bu=(x,y,z)$ and $\bu^TC_2 \bu = 0$, we
must have $z \neq 0$. Thus on the ellipse \Eq{QuadraticForm}, we can scale the vector $(x,y,1)^T$
by $z$ to obtain $\bu^TC_2 \bu = 0$ for the more general vector $\bu = (zx,zy,z)^T$.
Similarly when $(x,y,1) C_1 (x,y,1)^T < 0$, the corresponding scaled
equation $\bu^TC_1 \bu < 0$ as well since each term in the quadratic form  is scaled by $z^2 > 0$.
\end{remark}
%%%%%%%

%%%%%%%
\begin{remark}\label{rem:MatrixScaling}
There is some freedom in the choice of matrices to represent the ellipses.
For example, if the pair $(C_1,C_2)$ satisfies the assumptions, then
whenever $\delta, \gamma>0$, the pair $(\delta C_1,\gamma C_2)$ does as well.
Indeed, the eigenvalues of \Ass{SA2} are homogeneous of degree one in $C_1$, and
homogeneous of degree minus one in $C_2$.
\end{remark}
%%%%%%%

%%%%%%%%%%%%%%%%%%%%%%%%%%%%%%%%%%%%%%%%%%%%%%%%%%%%%%%%%%%%%
%%% Diagonalization
%%%%%%%%%%%%%%%%%%%%%%%%%%%%%%%%%%%%%%%%%%%%%%%%%%%%%%%%%%%%%
\subsection{Simultaneous Diagonalization}\label{sec:Diagonalization}
Here we will show that there is a transformation of the pair of matrices
that satisfy the assumptions of \Sec{Standing} to diagonal form.

%%%%%%%
\begin{lem}\label{lem:reduction}
If the matrices $C_1, C_2$ obey \Ass{SA1}-\ref{ass:SA3} then
% and $\cC_1$, $\cC_2$ be the corresponding ellipses.
there exists a nonsingular matrix $M$ such that
 \beq{SylvesterM}
	M^TC_1M= \diag(1,1,-1),
		\quad \mbox{and } \quad
	M^TC_2M= \diag(\lambda_1,\lambda_2,-\lambda_3)\;,
 \eeq
where $\{\lambda_1,\lambda_2,\lambda_3\}$ are the eigenvalues of $C_1^{-1}C_2$, and
\beq{EVOrdering}
	\lambda_1 > \lambda_2 > \lambda_3 >0 \;.
\eeq
\end{lem}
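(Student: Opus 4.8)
The plan is to build $M$ from the generalized eigenvectors of the pencil, and then to use the sign hypotheses \ref{ass:SA1} and \ref{ass:SA3} to fix the normalization and the labeling of the eigenvalues.

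First I would produce a basis that diagonalizes both forms. By \ref{ass:SA1} the matrix $C_1$ is invertible, so $C_1^{-1}C_2$ makes sense, and by \ref{ass:SA2} it has three distinct \emph{positive} (hence real) eigenvalues $\lambda_1,\lambda_2,\lambda_3$, with real eigenvectors $w_1,w_2,w_3$ satisfying $C_2 w_i=\lambda_i C_1 w_i$; distinctness makes $\{w_1,w_2,w_3\}$ a basis of $\bR^3$. The key computation is that for $i\neq j$, using the symmetry of $C_1$ and $C_2$,
\[
	\lambda_j\, w_i^T C_1 w_j = w_i^T C_2 w_j = w_j^T C_2 w_i = \lambda_i\, w_j^T C_1 w_i = \lambda_i\, w_i^T C_1 w_j ,
\]
so $(\lambda_i-\lambda_j)\,w_i^T C_1 w_j=0$ forces $w_i^T C_1 w_j=0$, and then also $w_i^T C_2 w_j=\lambda_j\,w_i^T C_1 w_j=0$. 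Hence, with $W=[\,w_1\,|\,w_2\,|\,w_3\,]$, both $W^TC_1W=\diag(c_1,c_2,c_3)$ and $W^TC_2W=\diag(\lambda_1 c_1,\lambda_2 c_2,\lambda_3 c_3)$ are diagonal. Since $\det W\neq 0$ and $\det C_1<0$ by \ref{ass:SA1}, we get $c_1c_2c_3=(\det W)^2\det C_1\neq 0$, so every $c_i\neq 0$; moreover $\diag(c_1,c_2,c_3)$ is congruent to $C_1$, which has ordered signature $(+,+,-)$ by \ref{ass:SA1}, so exactly one $c_i$ is negative.

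Next I would rescale and relabel. Reorder the $w_i$ so that $c_1,c_2>0>c_3$ and put $M=W\,\diag(|c_1|^{-1/2},|c_2|^{-1/2},|c_3|^{-1/2})$; then $M^TC_1M=\diag(1,1,-1)$ and $M^TC_2M=\diag(\lambda_1,\lambda_2,-\lambda_3)$, with the $\lambda$'s riding along in the chosen order. As $c_1$ and $c_2$ have the same sign I may still transpose the first two columns, so I arrange $\lambda_1>\lambda_2$ (legitimate and unambiguous by distinctness, \ref{ass:SA2}). It then only remains to show $\lambda_3<\lambda_2$, and this is where \ref{ass:SA3} enters: it is preserved under the congruence $\bu=M\bv$, so in the new coordinates it reads ``$\lambda_1 v_1^2+\lambda_2 v_2^2-\lambda_3 v_3^2=0$ and $\bv\neq 0$ imply $v_1^2+v_2^2-v_3^2<0$.'' Substituting $\bv=(1,0,\sqrt{\lambda_1/\lambda_3})$ and $\bv=(0,1,\sqrt{\lambda_2/\lambda_3})$ — both on $\cC_2$ because all $\lambda_i>0$ — gives $1-\lambda_1/\lambda_3<0$ and $1-\lambda_2/\lambda_3<0$, i.e. $\lambda_3<\lambda_1$ and $\lambda_3<\lambda_2$. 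Together with $\lambda_1>\lambda_2$ and positivity from \ref{ass:SA2}, this yields $\lambda_1>\lambda_2>\lambda_3>0$, as claimed.

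I expect the only genuine obstacle to be this last step: everything up to the normalization is the standard simultaneous-diagonalization-of-a-pencil argument, with \ref{ass:SA1} supplying invertibility and the $(+,+,-)$ signature and \ref{ass:SA2} the three distinct real eigenvalues; the content specific to this setting is checking that the projective ``inside'' hypothesis \ref{ass:SA3} carries through the congruence and forces the smallest eigenvalue to be the one paired with the $-1$ block.
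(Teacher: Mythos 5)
Your proof is correct and essentially matches the paper's: both construct $M$ by simultaneously diagonalizing with the (generalized) eigenvectors of $C_1^{-1}C_2$, use Sylvester's law to place the negative sign in the third slot and rescale to $\diag(1,1,-1)$, and then invoke \Ass{SA3} at points of $\cC_2$ to show that $\lambda_3$ must be the smallest eigenvalue. The only cosmetic differences are that you verify the $C_1$-orthogonality of the eigenvectors directly rather than through the paper's commutator argument $[Q_0^T C_1 Q_0, D]=0$, and you test \Ass{SA3} at two specific vectors where the paper uses an arbitrary pair $(w_1,w_2)\neq(0,0)$; both are inessential variations on the same argument.
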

%%%%%%

%%%%%%%
\begin{proof}
To obtain \Eq{SylvesterM}, \Ass{SA2} implies there exists
an invertible matrix $Q_0$ that diagonalizes $C_1^{-1}C_2$; that is
\[
	Q_0^{-1}C_1^{-1}C_2\,Q_0={D} \;, %\qquad {D} = \diag(\lambda_1,\lambda_2,\lambda_3).
\]
where ${D}$ is diagonal and positive definite.
This clearly implies %$C_2 Q_0 = C_1Q_0{D}$, so
that $Q_0^TC_2Q_0= (Q_0^TC_1Q_0){D}$. Noting that $Q_0^TC_2Q_0$ and $Q_0^TC_1Q_0$
are symmetric, then the commutator $[Q_0^TC_1Q_0, {D}] = 0$. Since the entries of ${D}$ are assumed distinct,
then $Q_0^TC_1Q_0$ must be diagonal. Finally $Q_0^TC_2Q_0$ is the product of diagonal matrices, so it too is diagonal.

Recall that Sylvester's law of inertia implies that the signature does not change under a congruency.
Thus by \Ass{SA1} there are exactly two positive and one negative entries on the diagonals of
the congruent matrices $Q_0^TC_1Q_0$ and $Q_0^TC_2Q_0$, and the negative entry is in the same position
in both, since ${D}$ is positive definite.
With an appropriate elementary permutation matrix $Q_1$, we can swap a pair of rows
and the same pair of columns to order these entries, i.e.,
\[
	Q_1^T(Q_0^T C_1Q_0)Q_1= \diag(\alpha_1, \alpha_2, \alpha_3),
\]
where $\alpha_1,\alpha_2>0$, and $\alpha_3<0$.
%This matrix can also permute $\alpha_1,\alpha_2$, if needed, for the final result that $\lambda_1>\lambda_2$.
If we let
\(
	T= \diag\left(\tfrac{1}{\sqrt{\alpha_1}},\tfrac{1}{\sqrt{\alpha_2}}, \tfrac{1}{\sqrt{-\alpha_3}} \right)
\)
%\(
%	T= \diag\left((\alpha_1)^{-1/2},(\alpha_2)^{-1/2}, (-\alpha_3)^{-1/2} \right)
%\)
and $M=Q_0Q_1T$, then
\[
	M^TC_1M = T^TQ_1^TQ_0^T C_1 Q_0Q_1T = T\, \diag(\alpha_1, \alpha_2, \alpha_3)\, T = \diag(1,1,-1),
\]
giving the form \Eq{SylvesterM}.
Finally, since $C_2 = C_1 Q_0{D} Q_0^{-1}$, a simple computation then gives
\begin{align*}
	M^T C_2 M &= M^T C_1 Q_0{D} Q_0^{-1} Q_0Q_1T = M^T C_1 M M^{-1}Q_0{D} Q_1T \\
	 		 &= \diag(1,1,-1) T^{-1} (Q_1^{T} {D} Q_1 ) T,
\end{align*}
where, since $Q_1$ is an elementary permutation, $Q_1^{-1} = Q_1^T$.
Now defining the reordered eigenvalues so that
$T^{-1}Q_1^{T} {D} Q_1 T= \diag(\lambda_1,\lambda_2,\lambda_3)$, gives
\[
	 M^T C_2 M = \diag(1,1,-1) \diag(\lambda_1,\lambda_2,\lambda_3),
\]
which is equivalent to \Eq{SylvesterM}. Clearly, $\{\lambda_1,\lambda_2,\lambda_3\}$
are the eigenvalues of $C_1^{-1}C_2$.

It remains to show that the ordering \Eq{EVOrdering} holds. Suppose that $(w_1,w_2)\neq(0,0)$ are arbitrary and
\[
	\bw =(w_1,w_2,w_3)= \left(w_1,w_2,\sqrt{\frac{\lambda_1w_1^2+\lambda_2w_2^2}{\lambda_3}} \right) .
\]
If $\bu = M \bw$, then both $\bw$ and $\bu$ are non-zero and \Eq{QuadraticForm} implies that
\[
	\bu^T C_2 \bu = \bw^T M^{T} C_2 M \bw = \lambda_1 w_1^2 + \lambda_2 w_2^2 - \lambda_3 w_3^2=0,
\]
so that $\bu$ is a point on $\cC_2$.
By \Ass{SA3}, $\cC_2 $ is in the interior of $\cC_1$; therefore we must have
%according to \Rem{trickSA3},
$\bu^T C_1 \bu < 0$. Using the definitions of $\bu$ and $\bw$, this inequality transforms to
\[
	\bu^T C_1 \bu^T = \bw^T M^{T} C_1 M \bw = w_1^2 + w_2^2 - w_3^2 = \left(1-\frac{\lambda_1}{\lambda_3}\right)w_1^2
					+ \left(1-\frac{\lambda_2}{\lambda_3}\right)w_2^2 < 0.
\]
Since this must be true for any $(w_1,w_2)\neq(0,0)$, we must have $\lambda_3 < \lambda_2, \lambda_1$.
Without loss of generality we can assume that $\lambda_2 < \lambda_1$, since by \Ass{SA2} the eigenvalues
are distinct; indeed this ordering can be obtained by changing the permutation matrix $Q_1$ if needed.
This concludes the proof.
\end{proof}
%%%%%%%

%%%%%%%
\begin{remark}
 The previous lemma can be regarded as a simultaneous diagonalization result in the
 context of generalized eigenvalues. Similar results about the diagonalization---and
 the order of the eigenvalues---are known \cite{Parlett1998,Stewart90} for the case
 in which one of the matrices is positive definite.
 In our case the positive definite condition is replaced by \Ass{SA1}-\ref{ass:SA3}.
\end{remark}
%%%%%%%

Given the normal forms \Eq{SylvesterM}, we next obtain conditions on the
parameters $(\nu_1,\nu_2)$, so that members of the family
\Eq{PencilSet} also satisfy the standing assumptions.

%%%%%%%
\begin{lem}\label{lem:restrict}
Suppose that $C_1$ and $C_2$ satisfies \Ass{SA1}-\ref{ass:SA3}, and that $\bnu=(\nu_1,\nu_2)$ satisfy
\Eq{nuRestrictions}, e.g.,
$
 	\nu_2>0 \mbox{ and } \nu_1+\lambda_3\nu_2>0
$.
Then the pair of matrices $(C_1,C_\bnu)$, with $C_\bnu=\nu_1C_1+\nu_2C_2$, also satisfies the standing assumptions.
In this case, we say that $\bnu=(\nu_1,\nu_2)$ is a \emph{valid parameter} vector.
\end{lem}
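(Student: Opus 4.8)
The plan is to verify the three standing assumptions \ref{ass:SA1}--\ref{ass:SA3} for the pair $(C_1, C_\bnu)$ directly, working in the coordinates provided by \Lem{reduction}. Since the standing assumptions are all invariant under the congruence $X \mapsto M^T X M$ (signatures are preserved by Sylvester's law, and $C_1^{-1}C_\bnu$ is conjugate to $M^{-1}C_1^{-1}C_\bnu M$ so its eigenvalues are unchanged), I may assume without loss of generality that $C_1 = \diag(1,1,-1)$ and $C_2 = \diag(\lambda_1,\lambda_2,-\lambda_3)$ with $\lambda_1 > \lambda_2 > \lambda_3 > 0$. Then $C_\bnu = \nu_1 C_1 + \nu_2 C_2 = \diag(\nu_1 + \lambda_1\nu_2,\ \nu_1 + \lambda_2\nu_2,\ -(\nu_1 + \lambda_3\nu_2))$.

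For \ref{ass:SA1}: the upper-left $2\times 2$ minor of $C_\bnu$ is $\diag(\nu_1+\lambda_1\nu_2,\ \nu_1+\lambda_2\nu_2)$, and I would show both diagonal entries are positive using \Eq{nuRestrictions}. Indeed $\nu_2 > 0$ and $\nu_1 + \lambda_3\nu_2 > 0$ give $\nu_1 + \lambda_2\nu_2 = (\nu_1 + \lambda_3\nu_2) + (\lambda_2 - \lambda_3)\nu_2 > 0$ since $\lambda_2 > \lambda_3$, and similarly $\nu_1 + \lambda_1\nu_2 > 0$ since $\lambda_1 > \lambda_3$. For the determinant condition, $\det C_\bnu = -(\nu_1+\lambda_1\nu_2)(\nu_1+\lambda_2\nu_2)(\nu_1+\lambda_3\nu_2) < 0$ because all three factors are positive. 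Thus $C_\bnu$ has ordered signature $(+,+,-)$. For \ref{ass:SA2}: I need the eigenvalues of $C_1^{-1}C_\bnu$ to be positive and distinct. In these coordinates $C_1^{-1} = \diag(1,1,-1)$, so $C_1^{-1}C_\bnu = \diag(\nu_1+\lambda_1\nu_2,\ \nu_1+\lambda_2\nu_2,\ \nu_1+\lambda_3\nu_2)$; the three entries are positive by the computation just done, and they are distinct because $\lambda_1,\lambda_2,\lambda_3$ are distinct and $\nu_2 \neq 0$.

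For \ref{ass:SA3}, I need that $\cC_\bnu$ lies inside $\cC_1$: whenever $\bu \neq 0$ with $\bu^T C_\bnu \bu = 0$, then $\bu^T C_1 \bu < 0$. Writing $\bu = (u_1,u_2,u_3)$ and setting $a_i = \nu_1 + \lambda_i\nu_2 > 0$ for $i=1,2,3$, the condition $\bu^T C_\bnu\bu = 0$ reads $a_1 u_1^2 + a_2 u_2^2 = a_3 u_3^2$, which forces $u_3 \neq 0$ (else $u_1 = u_2 = 0$ too). Then I would estimate $\bu^T C_1\bu = u_1^2 + u_2^2 - u_3^2$ from above: since $a_3 = \nu_1 + \lambda_3\nu_2 < \nu_1 + \lambda_i\nu_2 = a_i$ for $i = 1,2$ (again because $\lambda_3$ is the smallest eigenvalue and $\nu_2 > 0$), we get $a_3 u_3^2 = a_1 u_1^2 + a_2 u_2^2 \ge a_3(u_1^2 + u_2^2)$, hence $u_3^2 \ge u_1^2 + u_2^2$, with equality only if $u_1 = u_2 = 0$; but then $u_3^2 > 0 = u_1^2 + u_2^2$ anyway. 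In all cases $u_1^2 + u_2^2 - u_3^2 \le 0$, and since not all coordinates vanish the inequality is strict. This gives $\bu^T C_1\bu < 0$, establishing \ref{ass:SA3}.

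I do not expect a serious obstacle here — this is essentially a bookkeeping argument once one diagonalizes via \Lem{reduction}. The one point requiring a little care is the reduction step itself: I should note explicitly that passing to $M$-coordinates is legitimate, i.e., that $C_\bnu$ transforms as $M^T C_\bnu M = \nu_1 M^T C_1 M + \nu_2 M^T C_2 M$ so the pencil structure is preserved and the eigenvalues $\lambda_i$ appearing in \Eq{nuRestrictions} are exactly the ones from \Lem{reduction}. After that, each of the three assumptions reduces to the elementary inequalities $\nu_1 + \lambda_i\nu_2 > 0$ for all $i$, which follow from \Eq{nuRestrictions} together with the ordering $\lambda_1 > \lambda_2 > \lambda_3 > 0$; the strictness in \ref{ass:SA3} is the only place one must be slightly careful to handle the degenerate coordinate cases.
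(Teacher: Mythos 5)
Your proof is correct, and it works, but it follows a somewhat different route from the paper's.  You pass fully to the diagonalized coordinates of \Lem{reduction} and then verify each of \ref{ass:SA1}--\ref{ass:SA3} by checking the entries of the diagonal matrix $C_1^{-1}C_\bnu = \diag(\nu_1+\lambda_1\nu_2,\ \nu_1+\lambda_2\nu_2,\ \nu_1+\lambda_3\nu_2)$, with a direct coordinate estimate for \ref{ass:SA3}.  The paper instead uses the diagonalization only once, to deduce that $C_2 - \lambda_3 C_1$ is positive semi-definite, and then works with the coordinate-free decomposition $C_\bnu = (\nu_1+\lambda_3\nu_2)C_1 + \nu_2(C_2-\lambda_3 C_1)$: \ref{ass:SA1} follows because a positive multiple of $C_1$ plus something with a PSD upper-left block still has a positive-definite upper-left block, and \ref{ass:SA3} follows from the identity $(\nu_1+\lambda_3\nu_2)\,\bu^T C_1\bu = -\nu_2\,\bu^T(C_2-\lambda_3C_1)\bu \le 0$ together with a short contradiction argument.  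Your version is more explicit and arguably easier to audit line-by-line; the paper's is more structural and avoids case-checking.  The \ref{ass:SA2} steps are essentially identical.  One small stylistic remark: in your \ref{ass:SA3} argument, once you show that equality $u_3^2 = u_1^2 + u_2^2$ would force $u_1=u_2=0$ and hence (via $a_3 u_3^2 = 0$) $u_3=0$, contradicting $\bu\ne\mathbf{0}$, the strict inequality is already established; the closing sentence ``since not all coordinates vanish the inequality is strict'' is redundant and, on its own, not quite the right justification, so I would cut it to avoid confusion.
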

%%%%%%%

%%%%%%%
\begin{proof}
Using \Lem{reduction}, there is a matrix $M$ giving \Eq{SylvesterM}
% $M^TC_1M= \diag(1,1,-1)$, and
%	$M^TC_2M= \diag(\lambda_1,\lambda_2,-\lambda_3)$,
where $\lambda_1>\lambda_2>\lambda_3>0$.
This implies that
\[
	C_2-\lambda_3C_1= M^{-T} \diag(\lambda_1-\lambda_3,\lambda_2-\lambda_3, 0) M^{-1}.
\]
Since both $\lambda_1-\lambda_3$ and $\lambda_2-\lambda_3>0$, the matrix
$C_2-\lambda_3C_1$ is positive semi-definite.
Thus $\bu^T(C_2-\lambda_3C_1)\bu\geq0$, for all $\bu\in\bR^3$.
We can write the matrix $C_\bnu$ of \Eq{PencilSet} as
\[
	C_\bnu=\nu_1C_1+\nu_2C_2=(\nu_1+\lambda_3\nu_2)C_1+\nu_2(C_2-\lambda_3C_1).
\]
Note that when $\bnu$ satisfies the restrictions \Eq{nuRestrictions}
then since $C_2-\lambda_3C_1$ is positive semi-definite, and $C_1$ satisfies \Ass{SA1},
then $C_\bnu$ must have a positive definite upper-left minor. That is,
$(x,y,0)C_\bnu (x,y,0)^T >0$ whenever $(x,y)\neq(0,0)$.
This proves that $C_\bnu$ satisfies \Ass{SA1}.

To verify \Ass{SA2}, it is enough to notice that the eigenvalues of $C_1^{-1}C_\bnu$ are
$\{\widetilde{\lambda}_1,\widetilde{\lambda}_2,\widetilde{\lambda}_3\}$, where
$\widetilde{\lambda}_i=\nu_1+\lambda_i\nu_2$. Then \Eq{EVOrdering} and \Eq{nuRestrictions} imply
that $0<\widetilde{\lambda}_3<\widetilde{\lambda}_2<\widetilde{\lambda}_1$. This proves \Ass{SA2}.

Finally, to prove \Ass{SA3}, assume that $\bu\neq\mathbf{0}$ satisfies
$\bu^TC_\bnu\bu=0$. We want to show that $\bu^T C_1 \bu < 0$.
Since $C_2-\lambda_3C_1$ is positive semi-definite,
 \[
 	(\nu_1+\lambda_3\nu_2)\bu^TC_1\bu
	= \bu^T (-\nu_2 C_2 + \lambda_3\nu_2 C_1) \bu
	= -\nu_2\bu^T(C_2-\lambda_3C_1)\bu\leq0.
 \]
If it were the case that $\bu^TC_1\bu=0$, then the above inequality implies that $\bu^TC_2\bu=0$.
However, then \Ass{SA3} would imply that
$\bu^TC_1\bu<0$. This is a contradiction, so the only possibility is
that $\bu^TC_1\bu<0$, and this implies \Ass{SA3}.
 \end{proof}

The region defined in \Eq{nuRestrictions} is sketched in \Fig{PencilRegion}.
Note that it is the cone generated by the two extreme cases $\cC_o = \cC_1$ and $\cC_\infty$ that correspond to
the vectors and $(1,0)$ and $(-\lambda_3,1)$, respectively (see \Sec{Projective} below).
Moreover, by \Lem{restrict}, $\cC_\bnu\subset \interior(\cC_1)$, whenever $\bnu=(\nu_1,\nu_2)$ is in this region.

%%%%%%%
\InsertFig{PencilRegion}
{Shaded region of the parameter plane $(\nu_1,\nu_2)$ in which the pencil \Eq{PencilSet}
obeys conditions \Eq{nuRestrictions} so that
$\cC_\bnu$ is inside $\cC_1$. The representative vectors correspond to $\cC_\bmu$ being inside $\cC_\bnu$ according to \Lem{restrict}.
The dashed line shows the variation of the rotation number along the segment \Eq{LineSegment}.}
{PencilRegion}{3.8in}
%%%%%%

%%%%%%%%%%%%%%%%%%%%%%%%%%%%%%%%%%%%%%%%%%%%%%%%%%%%%%%%%%%%%
%%% Pencil Eccentricity
%%%%%%%%%%%%%%%%%%%%%%%%%%%%%%%%%%%%%%%%%%%%%%%%%%%%%%%%%%%%%
\subsection{Pencil Eccentricity}\label{sec:PencilE}

For a pair of ellipses satisfying the standing assumptions,
the \emph{pencil eccentricity} \Eq{PencilEccentricity} is an invariant that
depends only on the eigenvalues $\lambda_1,\lambda_2,\lambda_3$ of
$C_1^{-1}C_2$. As we will see below, this quantity
determines the rotation number of the associated Poncelet map.
First we show that the eccentricity is pencil invariant:
given an outer ellipse the eccentricity is the same for \emph{any}
interior ellipse of the pencil.

%%%%%%%
\begin{lem}\label{lem:ModifiedEccentricity}
Suppose that $C_1$ and $C_2$ satisfy \Ass{SA1}-\ref{ass:SA3} and that $\cC_\bnu$ is the associated pencil \Eq{PencilSet}.
Then if $\bnu$ and $\bmu$ satisfy \Eq{nuRestrictions},
the eccentricity of the pencil is invariant:
\[
	e(C_1,C_\bnu)=e(C_1,C_\bmu)=e(C_1,C_2) \;.
\]
More generally, if $\cC_\bmu$ is inside $\cC_\bnu$, then
\beq{ModifiedEccentricity}
	e(C_\bnu,C_\bmu) = \sqrt{\frac{\nu_1+\lambda_3\nu_2}{\nu_1+\lambda_2\nu_2}}\,e(C_1,C_2).
\eeq
\end{lem}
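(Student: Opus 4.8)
The key tool is the simultaneous diagonalization of \Lem{reduction} applied to the pair $(\cC_\bnu, \cC_\bmu)$, together with the computation of the eigenvalues of $C_\bnu^{-1}C_\bmu$ in terms of the original pencil eigenvalues $\lambda_1, \lambda_2, \lambda_3$. First I would recall from the proof of \Lem{restrict} that, with $M$ the matrix giving \Eq{SylvesterM}, we have $C_\bnu = M^{-T}\diag(\widetilde\lambda_1^\bnu, \widetilde\lambda_2^\bnu, -\widetilde\lambda_3^\bnu)M^{-1}$ where $\widetilde\lambda_i^\bnu = \nu_1 + \lambda_i\nu_2$, and likewise $C_\bmu = M^{-T}\diag(\widetilde\lambda_1^\bmu, \widetilde\lambda_2^\bmu, -\widetilde\lambda_3^\bmu)M^{-1}$ with $\widetilde\lambda_i^\bmu = \mu_1 + \lambda_i\mu_2$. (Here I am using that the \emph{same} congruence $M$ simultaneously diagonalizes both members of the pencil, since both are linear combinations of $C_1$ and $C_2$.) The restrictions \Eq{nuRestrictions} for $\bnu$ guarantee $\widetilde\lambda_1^\bnu > \widetilde\lambda_2^\bnu > \widetilde\lambda_3^\bnu > 0$; the fact that $\cC_\bmu$ is inside $\cC_\bnu$ (by the equivalences announced in \Th{Monotonicity}, or directly from \Lem{restrict} applied to the pair $(C_\bnu, C_\bmu)$) gives the analogous strict ordering for the $\widetilde\lambda_i^\bmu$.

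From this the eigenvalues of $C_\bnu^{-1}C_\bmu$ are simply the ratios
\[
	\Lambda_i = \frac{\widetilde\lambda_i^\bmu}{\widetilde\lambda_i^\bnu} = \frac{\mu_1+\lambda_i\mu_2}{\nu_1+\lambda_i\nu_2}, \qquad i=1,2,3,
\]
since $C_\bnu^{-1}C_\bmu = M\,\diag(\widetilde\lambda_i^\bmu/\widetilde\lambda_i^\bnu)\,M^{-1}$ after accounting for the sign in the third slot (the signs cancel in the ratio). To apply \Eq{PencilEccentricity} I need the $\Lambda_i$ in decreasing order; a short monotonicity check on the Möbius-type function $\lambda \mapsto (\mu_1+\lambda\mu_2)/(\nu_1+\lambda\nu_2)$ shows its derivative has the sign of $\nu_1\mu_2 - \nu_2\mu_1 = -(\bnu\times\bmu)$, which is negative precisely because $\cC_\bmu\subset\interior(\cC_\bnu)$ (again invoking the equivalence of conditions in \Th{Monotonicity}). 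Hence the function is decreasing in $\lambda$, so since $\lambda_1 > \lambda_2 > \lambda_3$ we get $\Lambda_1 < \Lambda_2 < \Lambda_3$; i.e.\ the ordered eigenvalues for the pair $(\cC_\bnu,\cC_\bmu)$ are $\Lambda_3 > \Lambda_2 > \Lambda_1$.

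Then I would just substitute into the definition \Eq{PencilEccentricity}:
\[
	e(C_\bnu, C_\bmu)^2 = \frac{\Lambda_3 - \Lambda_2}{\Lambda_3 - \Lambda_1}
	= \frac{\dfrac{\mu_1+\lambda_3\mu_2}{\nu_1+\lambda_3\nu_2} - \dfrac{\mu_1+\lambda_2\mu_2}{\nu_1+\lambda_2\nu_2}}
	       {\dfrac{\mu_1+\lambda_3\mu_2}{\nu_1+\lambda_3\nu_2} - \dfrac{\mu_1+\lambda_1\mu_2}{\nu_1+\lambda_1\nu_2}}.
\]
Clearing denominators, each difference of the form $\dfrac{\mu_1+\lambda_i\mu_2}{\nu_1+\lambda_i\nu_2} - \dfrac{\mu_1+\lambda_j\mu_2}{\nu_1+\lambda_j\nu_2}$ equals $\dfrac{(\lambda_i-\lambda_j)(\nu_1\mu_2-\nu_2\mu_1)}{(\nu_1+\lambda_i\nu_2)(\nu_1+\lambda_j\nu_2)}$; the factor $\nu_1\mu_2-\nu_2\mu_1$ cancels between numerator and denominator, as does $(\nu_1+\lambda_3\nu_2)$. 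What remains is
\[
	e(C_\bnu,C_\bmu)^2 = \frac{(\lambda_3-\lambda_2)/(\nu_1+\lambda_2\nu_2)}{(\lambda_3-\lambda_1)/(\nu_1+\lambda_1\nu_2)}
	= \frac{\nu_1+\lambda_1\nu_2}{\nu_1+\lambda_2\nu_2}\cdot\frac{\lambda_1-\lambda_2}{\lambda_1-\lambda_3},
\]
wait — that is not what is claimed, so this is exactly the step to be careful with. Re-examining, I expect the correct grouping produces $\dfrac{\nu_1+\lambda_3\nu_2}{\nu_1+\lambda_2\nu_2}\cdot\dfrac{\lambda_1-\lambda_2}{\lambda_1-\lambda_3} = \dfrac{\nu_1+\lambda_3\nu_2}{\nu_1+\lambda_2\nu_2}\,e(C_1,C_2)^2$, matching \Eq{ModifiedEccentricity} after taking square roots. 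The first assertion — pencil invariance $e(C_1,C_\bnu) = e(C_1,C_2)$ — then follows as the special case $\bmu = (1,0)$ (so $\cC_\bmu = \cC_1$ is the outer ellipse itself), or more naturally by reading \Eq{ModifiedEccentricity} with the roles arranged so that $\bnu = (1,0)$ gives $\nu_1+\lambda_3\nu_2 = \nu_1 + \lambda_2\nu_2 = 1$; and $e(C_1,C_\bnu) = e(C_1,C_\bmu)$ follows by transitivity.

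\textbf{Main obstacle.} The only real work is the algebraic simplification of the ratio of ratios: getting the cancellations right and, crucially, identifying the correct ordering of $\Lambda_1, \Lambda_2, \Lambda_3$ so that the "middle minus largest over smallest minus largest" structure of \Eq{PencilEccentricity} is applied to the right terms. The sign of $\nu_1\mu_2 - \nu_2\mu_1$ (equivalently $\bnu\times\bmu$) enters twice — once to fix the eigenvalue ordering and once as a common factor that cancels — so I would make sure the hypothesis "$\cC_\bmu$ inside $\cC_\bnu$" is converted cleanly into $\bnu\times\bmu > 0$ before starting, citing the relevant equivalence. Everything else is routine.
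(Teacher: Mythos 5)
Your overall approach matches the paper's: compute the eigenvalues of $C_\bnu^{-1}C_\bmu$ as the ratios $\bar\lambda_i = (\mu_1+\lambda_i\mu_2)/(\nu_1+\lambda_i\nu_2)$ and substitute into \Eq{PencilEccentricity}. The part you flagged as dubious is in fact wrong, and it traces to a sign error at the ordering step. You claim the derivative of $\lambda\mapsto(\mu_1+\lambda\mu_2)/(\nu_1+\lambda\nu_2)$ has the sign of $\nu_1\mu_2-\nu_2\mu_1=-(\bnu\times\bmu)$, but with the convention of \Lem{interior} one has $\bnu\times\bmu=\nu_1\mu_2-\nu_2\mu_1$, so the derivative has the sign of $+\bnu\times\bmu$, which is \emph{positive} when $\cC_\bmu\subset\interior(\cC_\bnu)$. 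Hence the M\"obius function is increasing, the ordering of the $\lambda_i$ is \emph{preserved} rather than reversed, and the correct eigenvalue order for the pair $(C_\bnu,C_\bmu)$ is $\bar\lambda_1>\bar\lambda_2>\bar\lambda_3$. Feeding the reversed order into \Eq{PencilEccentricity} is exactly what produced your wrong numerator $\lambda_2-\lambda_3$ and denominator $\nu_1+\lambda_1\nu_2$ (you also slipped once more by writing $\lambda_1-\lambda_2$ where your own intermediate expression had $\lambda_3-\lambda_2$); ``re-examining'' the grouping cannot fix this, because the mistake is in the choice of which differences to form, not in the algebra afterward.

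With the ordering corrected, the computation becomes
\[
	e(C_\bnu,C_\bmu)^2 = \frac{\bar\lambda_1-\bar\lambda_2}{\bar\lambda_1-\bar\lambda_3}
	= \frac{(\lambda_1-\lambda_2)(\nu_1\mu_2-\nu_2\mu_1)/\bigl[(\nu_1+\lambda_1\nu_2)(\nu_1+\lambda_2\nu_2)\bigr]}{(\lambda_1-\lambda_3)(\nu_1\mu_2-\nu_2\mu_1)/\bigl[(\nu_1+\lambda_1\nu_2)(\nu_1+\lambda_3\nu_2)\bigr]}
	= \frac{\nu_1+\lambda_3\nu_2}{\nu_1+\lambda_2\nu_2}\cdot\frac{\lambda_1-\lambda_2}{\lambda_1-\lambda_3},
\]
where the common factors $\nu_1\mu_2-\nu_2\mu_1$ and $\nu_1+\lambda_1\nu_2$ (not $\nu_1+\lambda_3\nu_2$) cancel. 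This is exactly the paper's computation, and it gives \Eq{ModifiedEccentricity} upon taking square roots. The first assertion of the lemma does not even need the cross-product argument: the paper proves $e(C_1,C_\bnu)=e(C_1,C_2)$ directly by substituting $\widetilde\lambda_i=\nu_1+\lambda_i\nu_2$ into \Eq{PencilEccentricity} and observing the $\nu_2$ cancels, rather than deriving it as a degenerate special case of \Eq{ModifiedEccentricity} with $\bmu=(1,0)$ as you suggest (that choice does not satisfy $\cC_\bmu\subset\interior(\cC_\bnu)$, so the hypothesis of the ``more general'' part would not even apply).
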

%%%%%%

\begin{proof}
The eigenvalues of $C_1^{-1}C_\bnu$ are precisely $\widetilde{\lambda}_i=\nu_1+\lambda_i\nu_2$, $i=1,2,3$. This implies that
\[
	e(C_1,C_\bnu)=
 		\sqrt{\frac{\widetilde{\lambda}_1-\widetilde{\lambda}_2}{\widetilde{\lambda}_1-\widetilde{\lambda}_3}}=
 		\sqrt{\frac{\lambda_1-\lambda_2}{\lambda_1-\lambda_3}}= e(C_1,C_2).
\]
For the more general case, note that the eigenvalues of the matrix $C_\bnu^{-1}C_\bmu$ are
\[
	\bar{\lambda_i} = \frac{\mu_1 + \lambda_i \mu_2}{\nu_1 + \lambda_i \nu_2},
\]
thus
\[
	e(C_\bnu,C_\bmu) = \sqrt{\frac{\bar\lambda_1-\bar\lambda_2}{\bar\lambda_1-\bar\lambda_3}}
	 = \sqrt{\frac{\nu_1+\lambda_3\nu_2}{\nu_1+\lambda_2\nu_2}}
	 	 \sqrt{\frac{(\nu_1\mu_2-\nu_2\mu_1)(\lambda_1-\lambda_2)}{(\nu_1\mu_2-\nu_2\mu_1)(\lambda_1-\lambda_3)}},
\]
which reduces to the promised expression \Eq{ModifiedEccentricity}.
\end{proof}

%%%%%%%%%
\begin{remark}
Note that, as in \Rem{MatrixScaling}, the eccentricity \Eq{PencilEccentricity} is also invariant under
the scaling transformation:
$
	e( C_1, C_2)=e(\delta C_1,\gamma C_2),
$
for $\delta,\gamma>0$.
Furthermore, whenever $C_1$ and $C_2$ satisfy \Ass{SA2}, then so do the pair of congruent matrices
$\widetilde{C}_1=S^TC_1S$ and $\widetilde{C}_2=S^TC_1S$,
when $S$ is invertible. Note that, since
\(
	\widetilde{C}_1^{-1}\widetilde{C}_2 = S^{-1}(C_1^{-1}C_2)S ,
\)
the eigenvalues for \Ass{SA2} are unchanged.
Thus, the eccentricity is invariant under such a transformation:
$
	e( C_1, C_2)=e(\widetilde{C}_1,\widetilde{C}_2).
$
\end{remark}
%%%%%%

%%%%%%%%%%%%%%%%%%%%%%%%%%%%%%%%%%%%%%%%%%%%%%%%%%%%%%%%%%%%%
%%% Projective Transformations
%%%%%%%%%%%%%%%%%%%%%%%%%%%%%%%%%%%%%%%%%%%%%%%%%%%%%%%%%%%%%
\subsection{Projective Transformations}\label{sec:Projective}

In this section we will use the matrix $M$ that satisfies
\Eq{SylvesterM} to create a projective transformation of the plane.
Recall that under a projective transformation, lines are
transformed into lines, and quadrics into quadrics. Thus
such a transformation preserves collinearity and tangency between two curves.
For application to Poncelet maps, we will also require that the projective transformation
be an orientation preserving diffeomorphism.

%%%%%%
\begin{defn}[Standard Projective Transformation]
A $3\times 3$ matrix $M$ defines a %projective
map of the plane
\beq{mhat}
	\widehat M \begin{pmatrix} x \\ y \end{pmatrix}
	=\begin{pmatrix}
 		\displaystyle \frac{m_{11}x+m_{12}y+m_{13}}{m_{31}x+m_{32}y+m_{33}} \\
		\\
 		\displaystyle \frac{m_{21}x+m_{22}y+m_{23}}{m_{31}x+m_{32}y+m_{33}} \\
 	\end{pmatrix}.
\eeq
\end{defn}

%%%%%%%%
\begin{lem}\label{lem:den}
If $M$ is a matrix giving \Eq{SylvesterM}, then $\widehat{M}: \bD \to \widehat{M}(\bD)$ is a diffeomorphism, where
$\bD$ is the unit disk
\beq{UnitDisk}
	\bD=\{(x,y)\in\bR^2: x^2+y^2\leq1\}.
\eeq

%\[
%	\{M (x,y,1)^T: (x,y)\in\bD\} \cap \{(u,v,0)^T: u,v \in \bR\} = \emptyset.
%\]
\end{lem}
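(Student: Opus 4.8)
The plan is to show that $\widehat M$ restricted to $\bD$ is a well-defined, injective, smooth map with smooth inverse by exploiting the fact that $M$ carries the outer ellipse $\cC_1$ (in the diagonalized coordinates) to the unit circle $\bS^1 = \partial\bD$. The key structural fact from \Eq{SylvesterM} is that $M^T C_1 M = \diag(1,1,-1) \equiv J$, so $C_1 = M^{-T} J M^{-1}$. This means that for $\bu = (x,y,1)^T$ the quadratic form $\bu^T C_1 \bu$ has the same sign as $\bw^T J \bw$ where $\bw = M^{-1}\bu$; and the interior of $\cC_1$ (where $\bu^T C_1 \bu < 0$, by the standing assumptions of \Sec{Standing}) corresponds to the set where $\bw^T J \bw < 0$, i.e. $w_1^2 + w_2^2 < w_3^2$. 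In projective terms this is exactly the interior of the standard cone, which in the affine chart $w_3 \ne 0$ is the open unit disk. So $\widehat M$, viewed as the projective action of $M$, is a projective map taking $\cC_1$ to $\bS^1$ and $\interior(\cC_1)$ to $\interior(\bD)$.

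\textbf{Key steps.} First I would make precise that $\widehat M$ is the affine representative of the projective linear map $[M^{-1}]$ acting on $\bP^2$: a point $(x,y) \in \bR^2$ lifts to $[\,(x,y,1)\,]$, we apply $[M^{-1}]$ — wait, one must check the index convention in \Eq{mhat}; it is the map whose homogeneous lift is $\bu \mapsto M^{-1}\bu$ up to relabeling, or equivalently $\widehat M$ has homogeneous lift given by $M$ acting on $(x,y,1)^T$ and then dehomogenizing — I would state clearly which one and keep it consistent. Second, I would verify the \emph{denominator does not vanish on $\bD$}: the denominator in \Eq{mhat} is $m_{31}x + m_{32}y + m_{33}$, which is the third coordinate of $M(x,y,1)^T$; this vanishes exactly on the line $\widehat M$ sends to infinity, namely the preimage of the line at infinity. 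Since $\widehat M$ maps $\interior(\cC_1)$ into the affine plane (the image of the disk's interior lands inside $\bS^1$ which is bounded, hence finite), that bad line cannot meet $\overline{\bD} = \bD$; more carefully, the closure $\bD$ including $\bS^1$ maps to $\overline{\bS^1}$, all of which is finite, so the denominator is nonzero on all of $\bD$. This is the step that needs the sign analysis: a point where $\bw^T J\bw \le 0$ with $\bw = $ (lift) must have $w_3 \ne 0$, since $w_3 = 0$ forces $w_1^2 + w_2^2 \le 0$, i.e. $\bw = 0$, impossible for a lift of an affine point. Third, injectivity and smoothness: $\widehat M$ is smooth wherever the denominator is nonzero (quotient of polynomials), so it is smooth on $\bD$; it is injective because $M$ is invertible so the projective map $[M]$ is a bijection of $\bP^2$, and the inverse projective map $[M^{-1}]$ restricted to $\widehat M(\bD)$ is its smooth inverse — one just checks symmetrically that $M^{-1}$'s third-row linear form is nonzero on $\widehat M(\bD)$, which follows because $\widehat M(\bD)$ is a bounded set inside the image circle. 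Finally, I would conclude $\widehat M : \bD \to \widehat M(\bD)$ is a diffeomorphism.

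\textbf{Main obstacle.} The routine part is the quotient-rule smoothness and the bijectivity of projective linear maps. The one genuinely delicate point is the \emph{non-vanishing of the denominator on all of the closed disk $\bD$} (note \Eq{UnitDisk} includes the boundary). The clean way to handle it is the sign argument above: any $(x,y) \in \bD$ lifts to a vector $\bu$ with $\bu^T C_1 \bu \le 0$, hence its image $\bw = M^{-1}\bu$ satisfies $w_1^2 + w_2^2 \le w_3^2$, forcing $w_3 \ne 0$ (else $\bw = 0$). Translating "$w_3 \ne 0$" back through the relation between $M$ and $M^{-1}$ gives precisely that the denominator in \Eq{mhat} is nonzero — I would just need to be careful about whether the denominator corresponds to a row of $M$ or of $M^{-1}$, and set up \Eq{mhat}'s lift accordingly. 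Once that is pinned down, everything else is bookkeeping, and the orientation-preserving refinement (needed later for Poncelet maps, as remarked before the lemma) can be arranged, if necessary, by composing $M$ with a reflection, since $\det M \ne 0$.
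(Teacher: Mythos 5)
Your approach is essentially the same as the paper's: both reduce the non-vanishing of the denominator on $\bD$ to a sign argument via the congruence $M^T C_1 M = \diag(1,1,-1)$ combined with the ordered signature from \Ass{SA1}. The only issue, which you flag yourself, is that you have $M$ and $M^{-1}$ flipped. To fix it: the denominator in \Eq{mhat} is the third component of $\bw = M(x,y,1)^T$, and the hypothesis $(x,y)\in\bD$ means $(x,y,1)\,\diag(1,1,-1)\,(x,y,1)^T = x^2+y^2-1 \le 0$ (not $\bu^T C_1 \bu \le 0$, which describes the closure of $\interior(\cC_1)$). Then $\bw^T C_1 \bw = (x,y,1)M^T C_1 M (x,y,1)^T = x^2+y^2-1 \le 0$, while if $w_3 = 0$ and $\bw\neq 0$, \Ass{SA1} forces $\bw^T C_1 \bw > 0$ --- the contradiction the paper uses. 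Once that indexing is pinned down, your remaining steps (smoothness as a quotient of polynomials where the denominator is nonzero, invertibility via the projective inverse given by $M^{-1}$) match the paper's proof.
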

%%%%%%%

\begin{proof}
First we show that whenever $(x,y) \in \bD$,
\[
	m_{31} x + m_{32} y + m_{33} \neq 0.
\]
By way of contradiction, suppose that there exists a
point $(x,y)\in\bD$ and $u,v\in \bR$, such that $M(x,y,1)^T = (u,v,0)^T$.
Since $M$ is nonsingular, $(u,v) \neq (0,0)$, and thus	
$(u,v,0) C_1 (u,v,0)^T > 0$  by \Ass{SA1}.
Using \Eq{SylvesterM}, we find that
\[
	x^2 + y^2 -1= (x,y,1)M^T C_1 M (x,y,1)^T = (u,v,0) C_1 (u,v,0)^T > 0,
\]
contradicting $(x,y)\in\bD$. This gives the result.
Finally, it is easy to see that $\widehat{M}$ is one-to-one. Indeed, the inverse of the transformation
\Eq{mhat} is simply obtained by replacing the nonsingular matrix $M$ by its inverse.
\end{proof}

%%%%%%%%
%\begin{remark} \label{rem:m33}
%As we will see below, \Lem{den} implies that $\widehat M$ is a projective change of coordinates that
%is a diffeomorphism between the pencil of nested ellipses generated by $C_1$ and $C_2$ and
%a standard pencil defined below in \S~\ref{sec:StandardPencil}. \
%\end{remark}

%%%%%%
\begin{lem}\label{lem:orient}
 The map \Eq{mhat} is orientation-preserving if and only if $\det(M)\,m_{33}>0$.
\end{lem}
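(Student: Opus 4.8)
The plan is to compute the Jacobian determinant of the map $\widehat M$ at an arbitrary point and show that its sign is controlled by $\det(M)\, m_{33}$, or more precisely by $\det(M)$ times a positive multiple of $m_{33}$ once we restrict attention to the relevant domain. First I would introduce the denominator function $D(x,y) = m_{31}x + m_{32}y + m_{33}$, which by \Lem{den} is nonvanishing on $\bD$ (and in particular $D(0,0) = m_{33} \neq 0$, so $D$ has constant sign on the connected set $\bD$ — hence $m_{33}$ and $D$ share the same sign there). Then $\widehat M = (f_1/D, f_2/D)$ where $f_i(x,y) = m_{i1}x + m_{i2}y + m_{i3}$ are the affine numerators.

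Next I would carry out the Jacobian computation. Using the quotient rule, the entries of the Jacobian matrix of $\widehat M$ are
\[
	\partial_x(f_i/D) = \frac{m_{i1} D - f_i m_{31}}{D^2}, \qquad
	\partial_y(f_i/D) = \frac{m_{i2} D - f_i m_{32}}{D^2}.
\]
Factoring $1/D^2$ out of each row gives a factor $1/D^4$ in front of the $2\times 2$ determinant. The remaining determinant is
\[
	\det \begin{pmatrix} m_{11}D - f_1 m_{31} & m_{12}D - f_1 m_{32} \\ m_{21}D - f_2 m_{31} & m_{22}D - f_2 m_{32} \end{pmatrix},
\]
which is a standard computation: expanding and using multilinearity, this equals $D \cdot \det(M)$. (The cleanest way to see this is to recognize that the $2\times 2$ matrix above is obtained from $M$ by row and column operations — subtract $f_i/D$ times the third column scaled appropriately — or simply to expand the bilinear form directly; one gets a single term proportional to $D$ with coefficient $\det M$, all the $f_i$-quadratic terms cancelling.) Hence
\[
	\det \left(\mathrm{D}\widehat M\right)(x,y) = \frac{\det(M)}{D(x,y)^3}.
\]

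Finally I would conclude: $\widehat M$ is orientation-preserving precisely when this Jacobian determinant is positive everywhere on its domain, i.e., when $\det(M)/D(x,y)^3 > 0$. Since $D$ has constant sign equal to that of $m_{33}$ on the (connected) domain $\bD$, we have $D^3$ of the same sign as $m_{33}$, so the condition becomes $\det(M)\, m_{33} > 0$. Conversely if $\det(M)\, m_{33} < 0$ the map reverses orientation, and the case $\det(M)\, m_{33} = 0$ is excluded since $M$ is nonsingular and $m_{33} = D(0,0) \neq 0$ by \Lem{den}. I expect the only mildly delicate step to be the algebraic identity that the $2\times 2$ determinant collapses to $D\det(M)$; everything else is bookkeeping. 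One clean route around the brute-force expansion is the observation that $\widehat M$ factors through the projectivization of the linear map $M$ acting on $\bR^3$, and the Jacobian of a projective transformation in an affine chart is a classical formula $\det(M)/D^{n+1}$ in dimension $n$; here $n=2$ gives the $D^3$, and I would cite or rederive this rather than grind the $2\times2$ determinant by hand.
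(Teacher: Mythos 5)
Your proof is correct and follows essentially the same route as the paper's: compute the Jacobian determinant, observe that it equals $\det(M)/D(x,y)^3$, and combine with \Lem{den} to see that $D$ (and hence $D^3$) has the constant sign of $m_{33}$ on $\bD$. The only difference is that the paper states the Jacobian formula without derivation, while you supply the $2\times 2$ expansion; your claimed identity that the numerator determinant collapses to $D\det(M)$ does hold (it follows, for instance, from cofactor expansion of $\det M$ along its third column together with the vanishing of the mixed cofactor sums), and the alternative you mention — citing the classical formula $\det(M)/D^{n+1}$ for the Jacobian of a projective map in an affine chart — is a perfectly standard shortcut.
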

%%%%%%

%%%%%%%
\begin{proof}
By \Lem{den} the denominator $m_{31}x+m_{32}y+m_{33}\neq0$, for all $(x,y)\in \bD$.
In particular, $m_{33}\neq0$ and $m_{31}x+m_{32}y+m_{33}$ does not change sign on $\bD$.
The determinant of the Jacobian of $\widehat{M}$ is
\[
 	\det\left(D\widehat{M}(x,y)\right)=\frac{\det(M)}{(m_{31}x+m_{32}y+m_{33})^3}.
\]
Since the sign of the denominator above is the sign of $m_{33}$, we conclude that
$ \det\left(D\widehat{M}(x,y)\right)>0$, for all $(x,y)\in D$, if and only if
$\det(M)$ and $m_{33}$ have the same sign.
\end{proof}

%%%%%%%%
\begin{remark}
The matrix $M$ defined by conditions \Eq{SylvesterM} is not unique. If $M$ satisfies \Eq{SylvesterM},
then any of the eight matrices,
\[
 	M\,\diag(\pm 1, \pm 1, \pm 1),
\]
will also satisfy equations \Eq{SylvesterM}.
Hence, we can modify the matrix $M$ and assume---without loss of generality---that $\det(M)\,m_{33}>0$,
so that $\widehat{M}$ is orientation-preserving.
\end{remark}
%%%%%%

%%%%%%%%%%%%%%%%%%%%%%%%%%%%%%%%%%%%%%%%%%%%%%%%%%%%%%%%%%%%%
%%% Transforming the Pencil
%%%%%%%%%%%%%%%%%%%%%%%%%%%%%%%%%%%%%%%%%%%%%%%%%%%%%%%%%%%%%
%\subsection{Projectively Transformed Pencil}\label{sec:Projective}
The projective transformation \Eq{mhat} can be applied to any pencil \Eq{PencilSet} under
the restrictions \Eq{nuRestrictions} to give the simple form.
\beq{util1}
 	\overline\cC_\bnu \equiv \widehat M ^{-1}(\cC_\bnu)
		= \left\{(X,Y) \in \bR^2 : (\nu_1 +\lambda_1\nu_2) X^2 + (\nu_1 + \lambda_2\nu_2)Y^2
		= \nu_1 + \lambda_3\nu_2 \right\}.
\eeq
Since $\widehat{M}$ is an orientation preserving diffeomorphism, the interior of $\overline\cC_\bnu$ becomes
\[
\interior(\overline\cC_\bnu)=\widehat M ^{-1}(\interior(\cC_\bnu)) = \left\{(X,Y) \in \bR^2 : (\nu_1 +\lambda_1\nu_2) X^2 + (\nu_1 + \lambda_2\nu_2)Y^2
		< \nu_1 + \lambda_3\nu_2 \right\}.
\]
Note that in these coordinates the outer ellipse, where $\nu_2=0$, simply becomes the unit circle
\[
	 \bS^1 = \widehat M ^{-1}(\cC_{(1,0)})
	 	= \left\{(X,Y) \in \bR^2 : X^2 + Y^2 = 1 \right\}.
\]
and the inner ellipse, where $\nu_1 = 0$, becomes
\beq{StdInnerEllipse}
	 \overline\cC_i = \widehat M ^{-1}(\cC_{(0,1)})
	 		= \left\{(X,Y) \in \bR^2 : \lambda_1 X^2 + \lambda_2 Y^2 = \lambda_3\right\}.
\eeq
The pencil \Eq{util1} also contains the limiting ellipse, on the boundary $\nu_1+\lambda_3\nu_2= 0$
of the region \Eq{nuRestrictions}. Choosing $\bnu = (-\lambda_3,1)$ to represent this case,
then the corresponding transformed conic is simply the origin:
\[
	\overline \cC_{(-\lambda_3,1)}= \{(0,0)\}.
\]
Of course this point is in the interior of all the ellipses in the pencil $\overline\cC_\bnu$. As a consequence,
the limiting point for the pencil satisfies
\beq{LimitingPoint}
	\cC_\infty \equiv \cC_{(-\lambda_3,1)}= \{\widehat M (0,0) \}.
\eeq

Finally, under the assumption of \Lem{orient}, the Poncelet map $P:\cC_o\to \cC_o$, with respect to the interior ellipse $\cC_i$, is transformed, by the conjugacy
\[
	\overline{P}= \widehat{M}^{-1}\circ P\circ \widehat{M}.
\]
to a Poncelet map $\overline{P} : \bS^1\to \bS^1$ with respect to
the inner ellipse \Eq{StdInnerEllipse}.

%%%%%%%%%%%%%%%%%%%%%%%%%%%%%%%%%%%%%%%%%%%%%%%%%%%%%%%%%%%%%
%%% Poncelet Circle Maps
%%%%%%%%%%%%%%%%%%%%%%%%%%%%%%%%%%%%%%%%%%%%%%%%%%%%%%%%%%%%%
\section{Poncelet maps for Pencils}\label{sec:PonceletMaps}

In \Sec{StandardPencil} we will show that the general pencil \Eq{PencilSet}, with eccentricity $e$
given by \Eq{PencilEccentricity},
can be mapped onto a \emph{standard pencil} that we call $\cS_e^f$ so that the outermost ellipse is the unit circle.
We see in \Sec{Parameterization} that this transformation conjugates \Eq{PencilPoncelet}
to the standard Poncelet map $P_e^f: \bS^1 \to \bS^1$ for the standard pencil.
We will then find a second conjugacy that maps any pair of confocal ellipses \Eq{Ellipse} onto the standard
pencil. The implication is that the billiard map $B_e^f$ is also conjugate to $P_e^f$. Thus by this two-step
process, we find a conjugacy between $B_e^f$ and $P_\bnu$. An implication is that the rotation numbers of $B_e^f$, $P_e^f$, and $P_\bnu$ are all equal to $\rhoin(e,f)$.

In \Sec{ExplicitRotation} we obtain the relation \Eq{PencilRotation}
between $\bnu$ and the rotation number of the Poncelet map.
We then prove \Th{PencilParam}, giving the inverse of this relation.
Finally in \Sec{Monotonicity}, we prove that the rotation number
satisfies the monotonicity condition \Th{Monotonicity}.

%%%%%%%%%%%%%%%%%%%%%%%%%%%%%%%%%%%%%%%%%%%%%%%%%%%%%%%%%%%%%%%%%
%%% Standard Pencil
%%%%%%%%%%%%%%%%%%%%%%%%%%%%%%%%%%%%%%%%%%%%%%%%%%%%%%%%%%%%%
\subsection{Standard Pencil}\label{sec:StandardPencil}

A \emph{standard pencil} is a family of ellipses with a common center for which the outermost is a circle:

\begin{defn}[Standard Pencil]
The \emph{standard pencil} with eccentricity $e \in (0,1)$ is the one-parameter family of ellipses
\beq{StandardPencil}
	\cS_e^f=\left\{(X,Y)\in\bR^2:\left(\frac{1-f^2}{1-e^2}\right)\,X^2+Y^2=\frac{f^2}{e^2}\right\},
\eeq
for  $0\leq f\leq e$. The outermost conic $\cS_e^e = \bS^1$ is
the unit circle, and the innermost is the limiting point $\cS_e^0=\{(0,0)\}$.
\end{defn}

\noindent
Note that the ellipses in the set \Eq{StandardPencil} are nested:  whenever $f_1<f_2$ then
$\cS_e^{f_1}\subset\interior\left(\cS_e^{f_2}\right)$. Moreover,
$
 	\bigcup_{f=0}^e\cS_e^f=\bD,
$
the unit disk \Eq{UnitDisk}.

It is easy to see that $\{\cS_e^f\}_{0\leq f\leq e}$ is a pencil of the form \Eq{PencilSet}.
%for outer curve $\cS_e^e$ and inner curve $\cS_e^0$.
%take the parameters
%\[
%	\bnu = \frac{1}{e^2}(f^2,e^2-f^2).
%\]
For the outer circle the associated matrix is $C_1 = \diag(1,1,-1)$ and
for  the ellipse $S_e^{f}$, it is $C_2 =\diag(\lambda_1\lambda_2,-\lambda_3)$ where
\beq{CanonicalLambda}
 	\lambda_1=\frac{1-f^2}{1-e^2},\qquad \lambda_2=1,\qquad\lambda_3=\frac{f^2}{e^2}.
\eeq
Since these matrices are already diagonal, \Eq{CanonicalLambda} are the eigenvalues for \Eq{SylvesterM},
and when $0<f<e$, they satisfy the ordering \Eq{EVOrdering}.
%and
% \[
% 	\lambda_1-\lambda_2=\frac{e^2-f^2}{1-e^2},\quad
% 	\lambda_1-\lambda_3=\frac{e^2-f^2}{e^2(1-e^2)},
% \]
Using \Eq{PencilEccentricity}, the eccentricity of this pencil is therefore $e(C_1,C_2) = e$,
which is---as expected---independent of $f$.

With some algebra, one can show that the standard Poncelet map, $P_e^f: \bS^1 \to \bS^1$,
with respect to the inner ellipse $\cS_e^f$, is explicitly
\beq{PefDefine}
 	P_e^f(X,Y)=
 		\left(-\frac{A_1 A_4\, Y \sqrt{1-e^2 Y^2}+A_2 A_3\, X}{A_2^2-A_1^2 e^2 Y^2},\frac{A_1 A_2\, X \sqrt{1-e^2 Y^2}-A_3 		A_4 Y}{A_2^2-A_1^2 e^2 Y^2}\right),
\eeq
where
\begin{align*}
 	A_1 &= 2 f \sqrt{\left(1-f^2\right) \left(e^2-f^2\right)}, \\
 	A_2 &= e^2-f^4,\\
 	A_3 &=e^2+f^4-2 f^2,\\
 	A_4 &= e^2 \left(1-2 f^2\right)+f^4.
	\end{align*}
Note that when $f = 0$, $P_e^0(x,y)=(-x,-y)$, so that
the segment $\overrightarrow{(x,y)P_e^0(x,y)}$ passes through the limiting point, $S_e^0$,
so that the rotation number is $\tfrac12$.
Moreover, $P_e^e(x,y)=(x,y)$ is the identity map since $S_e^e = \bS^1$.
As we will see, $P_e^f$ is conjugate to $B_e^f$, so it has rotation number $\rhoin(e,f)$.

The standard pencil \Eq{StandardPencil} is mapped onto the general pencil using the transformation $\widehat{M}$ of \Eq{mhat}. Moreover a simple linear transformation maps the family of confocal ellipses \Eq{Ellipse} onto the standard pencil:

%%%%%%
\begin{lem}\label{lem:CanonicalMaps}
Given matrices $C_1$ and $C_2$ with eigenvalues $\lambda_1,\lambda_2,\lambda_3$ obeying \Ass{SA1}-\ref{ass:SA3},
let $\cC_\bnu$ be the pencil \Eq{PencilSet} for a valid parameter vector $\bnu=(\nu_1,\nu_2)$, satisfying \Eq{nuRestrictions}.
If $e=e(C_1,C_2)$ is the pencil eccentricity and
\beq{Pencilf}
	f=f(\bnu)=e\sqrt{\frac{\nu_1+\lambda_3\nu_2}{\nu_1+\lambda_2\nu_2}},
 \eeq
 then:
\begin{enumerate}
\item $\widehat{M}\big(\bS^1\big)=\cC_1$ and $\widehat{M}\big(\cS_e^f\big)=\cC_\bnu$ using the transformation \Eq{mhat};
\item The linear transformation
\beq{transf}
	 T_f(x,y)=\left(
 		\frac{f}{\sqrt{1-f^2}}\, y ,
 			-f\,x\right),
\eeq
maps $T_f(E(f))=\bS^1$ and $T_f(E(e))=\cS_e^f$, where $E(\eps)$ is the confocal family
defined in \Eq{Ellipse}.
\end{enumerate}
\end{lem}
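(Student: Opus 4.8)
The two assertions are independent explicit computations; neither needs the dynamical content of the earlier sections, only the normal form \Eq{SylvesterM} and its consequence \Eq{util1}. My plan is to treat each separately.

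\textbf{Part 1.} The idea is to read off $\widehat{M}^{-1}(\cC_\bnu)$ from \Eq{util1} and match it against the standard pencil. Writing $\widetilde{\lambda}_i = \nu_1 + \lambda_i\nu_2$ as in \Lem{restrict}, \Eq{util1} says $\widehat{M}^{-1}(\cC_\bnu)$ is the conic $\widetilde{\lambda}_1 X^2 + \widetilde{\lambda}_2 Y^2 = \widetilde{\lambda}_3$; since $\bnu$ is valid, $\widetilde{\lambda}_2>0$, so dividing through puts it in the shape
\[
	\widehat{M}^{-1}(\cC_\bnu) = \Bigl\{(X,Y)\in\bR^2 : \tfrac{\widetilde{\lambda}_1}{\widetilde{\lambda}_2}\,X^2 + Y^2 = \tfrac{\widetilde{\lambda}_3}{\widetilde{\lambda}_2}\Bigr\}.
\]
Comparing with \Eq{StandardPencil}, this set is $\cS_e^f$ exactly when $f^2/e^2 = \widetilde{\lambda}_3/\widetilde{\lambda}_2$ and $(1-f^2)/(1-e^2) = \widetilde{\lambda}_1/\widetilde{\lambda}_2$. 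The first equality is precisely the prescription \Eq{Pencilf} for $f=f(\bnu)$, so the only thing to check is that this choice forces the second. Substituting $f^2 = e^2\widetilde{\lambda}_3/\widetilde{\lambda}_2$ and clearing denominators reduces the second equality to $e^2(\widetilde{\lambda}_1-\widetilde{\lambda}_3) = \widetilde{\lambda}_1-\widetilde{\lambda}_2$, i.e. $e^2 = (\widetilde{\lambda}_1-\widetilde{\lambda}_2)/(\widetilde{\lambda}_1-\widetilde{\lambda}_3)$; and since $\widetilde{\lambda}_i-\widetilde{\lambda}_j = \nu_2(\lambda_i-\lambda_j)$ this is $(\lambda_1-\lambda_2)/(\lambda_1-\lambda_3) = e^2$, which holds by the definition \Eq{PencilEccentricity} of the pencil eccentricity (equivalently, this is the pencil-invariance of \Lem{ModifiedEccentricity}). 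Thus $\widehat{M}^{-1}(\cC_\bnu) = \cS_e^f$, and applying the diffeomorphism $\widehat{M}$ (\Lem{den}, noting $\cS_e^f\subset\bD$) gives $\widehat{M}(\cS_e^f) = \cC_\bnu$. The identity $\widehat{M}(\bS^1) = \cC_1$ is the special case $\bnu=(1,0)$ of \Eq{util1}, already recorded there.

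\textbf{Part 2.} Here the plan is a direct linear change of variables. The map $T_f$ is linear with Jacobian determinant $f^2/\sqrt{1-f^2}>0$, hence an orientation-preserving isomorphism; inverting $(X,Y)=T_f(x,y)$ gives $x=-Y/f$, $y=\sqrt{1-f^2}\,X/f$. Substituting these into the defining equation $\eps^2 x^2 + \tfrac{\eps^2}{1-\eps^2}y^2 = 1$ of $E(\eps)$ from \Eq{Ellipse}: for $\eps=f$ everything cancels to $X^2+Y^2=1$, so $T_f(E(f))=\bS^1$; for $\eps=e$ one gets $\tfrac{e^2}{f^2}Y^2 + \tfrac{e^2(1-f^2)}{(1-e^2)f^2}X^2 = 1$, and multiplying by $f^2/e^2$ yields $\tfrac{1-f^2}{1-e^2}X^2 + Y^2 = \tfrac{f^2}{e^2}$, which is $\cS_e^f$ by \Eq{StandardPencil}. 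Hence $T_f(E(e))=\cS_e^f$.

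\textbf{Main obstacle.} There is no deep obstacle; the proof is elementary granted \Eq{util1}. The one point that must not be glossed over is that the single scalar $f$ prescribed by \Eq{Pencilf} must simultaneously match \emph{both} coefficients of the standard pencil, and that it does is exactly a restatement of the pencil-invariance of the eccentricity (\Lem{ModifiedEccentricity}, \Eq{PencilEccentricity}); everything else is routine algebra together with one Jacobian computation. It is worth observing in passing that $T_f$ and (after the normalization remark following \Lem{orient}) $\widehat{M}$ are orientation preserving, so these set-level identities in fact upgrade to orientation-preserving conjugacies, which is what will be needed for the Poncelet maps in the next subsection.
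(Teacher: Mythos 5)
Your proof is correct and follows essentially the same route as the paper's: for Part 1 you match $\widehat{M}^{-1}(\cC_\bnu)$ (read off from \Eq{util1}) against $\cS_e^f$ coefficient by coefficient, observing that the constant term fixes $f$ by \Eq{Pencilf} while the leading coefficient reduces to the pencil eccentricity identity \Eq{PencilEccentricity}, which is exactly the algebraic identity the paper verifies directly; Part 2 is the same explicit substitution of $T_f^{-1}$ into the defining equations of $E(f)$ and $E(e)$.
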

%%%%%%

\begin{proof}
By construction, the projective transformation $\widehat{M}$ satisfies \Eq{util1}, which implies that
$\widehat{M}^{-1}(\cC_1) = \bS^1$ since $\bnu = (1,0)$. % in this case.
Therefore, to prove the first part, it is enough to show that $\cS_e^f = \widehat M^{-1}(\cC_\bnu)$.
From \Eq{PencilEccentricity} and \Eq{Pencilf}, since $0<f<e$, we find that
% We also have that,
% \[
% 	\frac{f^2}{e^2}=\frac{\nu_1+\lambda_3\nu_2}{\nu_1+\lambda_2\nu_2},
% \]
% and
 \[
 	\frac{1-f^2}{1-e^2}=1+\left(\frac{e^2}{1-e^2}\right)\left(1-\frac{f^2}{e^2}\right)
 				=1+\left(\frac{\lambda_1-\lambda_2}{\lambda_2-\lambda_3}\right)
				 \left(\frac{(\lambda_2-\lambda_3)\nu_2}{\nu_1+\lambda_2\nu_2}\right)
				=\frac{\nu_1+\lambda_1\nu_2}{\nu_1+\lambda_2\nu_2}.
 \]
 This implies that $\cS_e^f$ has the same form as \Eq{util1}.

 For the second part, given any point $(x,y)\in\bR^2$, let $(X,Y)=T_f(x,y)$. A direct computation shows that
 $(x,y)\in E(f)$ if and only if $X^2+Y^2=1$, and
 $(x,y)\in E(e)$ if and only if $(X,Y)\in \cS_e^f$.
\end{proof}

%%%%%%
\begin{remark}
We know that the eccentricity of a pencil is invariant under projective transformations. In
fact, as a consequence of \Lem{CanonicalMaps}, any pencil with eccentricity $e$ is
projectively-equivalent to the standard pencil $\{\cS_e^f\big\}_{0\leq f\leq e}$.
Furthermore, two pencils of nested ellipses are projectively-equivalent if and only if they
have the same eccentricity.
\end{remark}
%%%%%%

%%%%%%
\begin{remark}
Suppose that $0<f<e<1$ and $\ell=\rhoin(e,f)$.
Substituting  $f=e\,\cd(2K(e)\ell,e)$ from \Eq{poristic} into \Eq{StandardPencil}, implies that
\[
	\cS_e^f=\left\{(X,Y)\in\bR^2:X^2+ \dn^2(2K(e)\ell,e)\,Y^2=\cn^2(2K(e)\ell,e)\,\right\}.
\]
This also implies that for a fixed pencil eccentricity $e$, the rotation number $\ell$ can be used to
parameterize the standard pencil.
\end{remark}
%%%%%%

%%%%%%%%%%%%%%%%%%%%%%%%%%%%%%%%%%%%%%%%%%%%%%%%%%%%%%%%%%%%%
%%% Canonical Covering space
%%%%%%%%%%%%%%%%%%%%%%%%%%%%%%%%%%%%%%%%%%%%%%%%%%%%%%%%%%%%%
\subsection{Standard Covering Space}\label{sec:Parameterization}

Using \Lem{reduction} and \Lem{CanonicalMaps}, we will now obtain
a general covering space that will simultaneously reduce all Poncelet maps,
and make them conjugate to rigid rotation.

\begin{defn}[Standard Covering Map]\label{def:StdCover}
Let $C_1$ and $C_2$ be a pair of matrices  obeying \Ass{SA1}-\ref{ass:SA3}.
If $M$  diagonalizes $C_1$ and $C_2$ as in \Eq{SylvesterM}, and $\det(M)m_{33}>0$, then
the standard covering map $\Pi_M:\bR\to\cC_1$, is
\beq{stdcover}
	\Pi_M(\theta) = \widehat{M} \begin{pmatrix} \cn(4K(e)\theta,e) \\ \sn(4K(e)\theta,e) \end{pmatrix} ,
\eeq
where $\widehat{M}$ is given in \Eq{mhat} and $e$ is the pencil eccentricity \Eq{PencilEccentricity}.
\end{defn}

\begin{remark}
  The parameterization \Eq{stdcover} is well defined since \Lem{den} implies that
the denominator in these terms can never be zero because $(\sn(u,e), \cn(u,e)) \in \bD$.
Moreover, the periodicity of the Jacobi elliptic functions implies that
$\Pi_M$ is a covering map with group of deck transformations generated by $\theta\mapsto\theta+1$.
The condition $\det(M)m_{33}>0$ is important so that $\widehat{M}$ is orientation-preserving.
In particular, if $M$ is the identity, then $\theta \mapsto  (\cn(4K(e)\theta,e) ,\sn(4K(e)\theta,e))$
is the covering map for the standard pencil.
\end{remark}

%%%%%
\begin{lem}\label{lem:PencilPoncelet}
Suppose that $C_\bnu$ is a pencil \Eq{PencilSet} with eigenvalues $\lambda_1 > \lambda_2 > \lambda_3 > 0$
and pencil eccentricity $e$.
If $P_\bnu : \cC_1 \to \cC_1$ is the family of Poncelet circle maps for the pencil, then $\Pi_M:\bR\to\cC_1$ is a covering map and
\beq{CoverConjugacy}
	P_\bnu\circ \Pi_M = \Pi_M \circ  \widetilde{B}_e^f ,
\eeq
where $f=f(\bnu)$ as in \Eq{Pencilf}.
 \end{lem}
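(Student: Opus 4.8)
The plan is to chain together the three conjugacies that have already been set up in the excerpt: (i) the linear map $T_f$ from \Lem{CanonicalMaps}(ii) conjugating the confocal billiard map $B_e^f$ on $E(f)$ to the standard Poncelet map $P_e^f$ on $\bS^1$; (ii) the projective diffeomorphism $\widehat M$ from \Sec{Projective} and \Lem{CanonicalMaps}(i) conjugating $P_e^f$ on $\bS^1$ to $P_\bnu$ on $\cC_1$; and (iii) the confocal covering map $\pi_e^f:\bR\to E(f)$ of \Th{ConfocalCover}, whose lift of $B_e^f$ is $\widetilde B_e^f(\theta)=\theta+\rhoin(e,f)$. Composing, $\Pi_M$ should be (up to the identification of the $\sn,\cn$ parameterization with $\pi_e^f$) exactly $\widehat M \circ T_f \circ \pi_e^f$, and the lift of $P_\bnu$ through $\Pi_M$ must then be the same $\widetilde B_e^f$.

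First I would verify that $\Pi_M$ is indeed a covering map: \Lem{den} guarantees the denominator $m_{31}\cn(\cdot)+m_{32}\sn(\cdot)+m_{33}$ never vanishes because $(\sn(u,e),\cn(u,e))\in\bD$ for all $u$, and $\widehat M$ restricted to $\bD$ is an orientation-preserving diffeomorphism onto its image (\Lem{orient} and the normalization $\det(M)m_{33}>0$); the $4K(e)$-periodicity of $\sn,\cn$ then makes $\theta\mapsto(\cn(4K(e)\theta,e),\sn(4K(e)\theta,e))$ a covering of $\bS^1$ with deck group generated by $\theta\mapsto\theta+1$, and post-composing with the diffeomorphism $\widehat M$ preserves this. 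Next I would establish the identity $\Pi_M = \widehat M\circ T_f\circ \pi_e^f$. Comparing \Eq{maincover} for $\pi_e^f$ with \Eq{transf} for $T_f$, one computes
\[
	T_f\circ\pi_e^f(\theta) = T_f\!\left(\tfrac{1}{f}\big(-\sn(4K(e)\theta,e),\ \sqrt{1-f^2}\,\cn(4K(e)\theta,e)\big)\right)
		= \big(\cn(4K(e)\theta,e),\ \sn(4K(e)\theta,e)\big),
\]
which is exactly the argument of $\widehat M$ in \Eq{stdcover}; hence $\Pi_M = \widehat M\circ T_f\circ\pi_e^f$ as claimed.

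Now to get \Eq{CoverConjugacy} I would assemble the commuting square. From \Lem{CanonicalMaps}(ii), $T_f$ conjugates the billiard map $B|_{\Lambda(e)}$ (equivalently $B_e^f$ on $E(f)$) to $P_e^f$ on $\bS^1$: $T_f\circ B_e^f = P_e^f\circ T_f$; from \Lem{CanonicalMaps}(i) and the conjugacy $\overline P=\widehat M^{-1}\circ P\circ\widehat M$ of \Sec{Projective}, $\widehat M$ conjugates $P_e^f$ to $P_\bnu$: $\widehat M\circ P_e^f = P_\bnu\circ\widehat M$. Combining, $P_\bnu = \widehat M\circ T_f\circ B_e^f\circ (\widehat M\circ T_f)^{-1}$ on $\cC_1$. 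Then
\[
	P_\bnu\circ\Pi_M = \widehat M\circ T_f\circ B_e^f\circ (\widehat M\circ T_f)^{-1}\circ \widehat M\circ T_f\circ\pi_e^f
		= \widehat M\circ T_f\circ B_e^f\circ\pi_e^f = \widehat M\circ T_f\circ\pi_e^f\circ\widetilde B_e^f = \Pi_M\circ\widetilde B_e^f,
\]
where the third equality is the lifting relation $B_e^f\circ\pi_e^f = \pi_e^f\circ\widetilde B_e^f$ from \Th{ConfocalCover}. This is precisely \Eq{CoverConjugacy}. The one point requiring care—and the main obstacle—is the orientation bookkeeping: $T_f$ reverses orientation (its matrix has negative determinant, from the $y\mapsto$ first coordinate, $x\mapsto$ negated second coordinate swap), so I must check that the composition $\widehat M\circ T_f$ (with the normalization $\det(M)m_{33}>0$ chosen in \Sec{Projective}) is orientation-preserving as a map between the oriented circles, so that $P_\bnu$ really is conjugated to $B_e^f$ traversed in the positive (rather than negative) direction, and the rotation number comes out as $+\rhoin(e,f)$ rather than $-\rhoin(e,f)\bmod 1$. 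I would handle this by tracking the sign through \Lem{orient} and the explicit forms of $T_f$ and $\Pi_M$, noting that the sign choice embedded in Definition~\ref{def:StdCover} ($\det(M)m_{33}>0$) is exactly what is needed; a direct check at one convenient point (e.g.\ $\theta=0$) fixes the direction globally since both maps are lifts of circle homeomorphisms.
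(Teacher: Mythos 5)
Your proposal follows the paper's own argument essentially verbatim: compute $T_f\circ\pi_e^f(\theta)=\bigl(\cn(4K(e)\theta,e),\sn(4K(e)\theta,e)\bigr)$ to identify $\Pi_M=\widehat M\circ T_f\circ\pi_e^f$, then chain the conjugacies from \Lem{CanonicalMaps} through the lifting relation of \Th{ConfocalCover} to get \Eq{CoverConjugacy}. Structurally this is exactly what the paper does.

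The one concrete error is in your orientation bookkeeping: $T_f$ is \emph{orientation-preserving}, not reversing. Its matrix is
\[
	T_f = \begin{pmatrix} 0 & \dfrac{f}{\sqrt{1-f^2}} \\ -f & 0 \end{pmatrix}, \qquad \det T_f = \frac{f^2}{\sqrt{1-f^2}} > 0,
\]
since the swap of coordinates (a reflection) is followed by a sign change on one of them, which restores positive determinant. Your stated worry is therefore moot, and the self-correction you propose is inconsistent with your own premise: if $T_f$ were orientation-reversing while $\widehat M$ is normalized to be orientation-preserving by $\det(M)m_{33}>0$, the composition $\widehat M\circ T_f$ would be orientation-reversing, and the normalization of $M$ would not fix it. What makes the orientation come out right is simply that both $T_f$ and $\widehat M$ preserve orientation, so $\Pi_M$ carries the positive direction of $\theta$ to positively oriented tangency on $\cC_1$, and the lift is $\theta\mapsto\theta+\rhoin(e,f)$ rather than $\theta\mapsto\theta-\rhoin(e,f)$. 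With that corrected, your proof is complete and matches the paper's.
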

%%%%%

\begin{proof}
Given $\bnu=(\nu_1,\nu_2)$ obeying \Eq{nuRestrictions}, we define $f$ by \Eq{Pencilf}. We
will use $\widehat{M}$ \Eq{mhat} and $T_f$ \Eq{transf} to show that
the lift of $P_\bnu$ is a rigid rotation.
The expression \Eq{maincover} for $\pi_e^f$, gives
\[
 	T_f\circ \pi_e^f(\theta)=\begin{pmatrix}
 		\cn\left(4K(e)\theta,e\right) \\
 		\sn\left(4K(e)\theta,e\right) \\
 		\end{pmatrix}.
\]
Since the covering map \Eq{stdcover} depends only on the pencil eccentricity
$e$, we can use it for any member of the pencil.
Since $\left.T_f\right|_{E(f)}:E(f)\to \bS^1$ and $\left.\widehat{M}\right|_{\bS^1}:\bS^1\to \cC_1$ are
diffeomorphisms and $\pi_e^f:\bR\to E(f)$ is a covering map, then
\[
	\Pi_M=\left.\widehat{M}\right|_{\bS^1}\circ \left.T_f\right|_{E(f)}\circ \pi_e^f,
\]
is a covering map of $\cC_1$.
Let $\overline{P}_\bnu:\bS^1\to \bS^1$ be the circle map given by
$\overline{P}_\bnu(x,y)=(\widehat{M})^{-1}\circ P_\bnu\circ \widehat{M}(x,y)$, for all $(x,y)\in \bS^1$.
Using \Lem{CanonicalMaps} and the fact that $\widehat{M}$ is an orientation-preserving
projective transformation, $\overline{P}_\bnu$ is the Poncelet map with outer ellipse
$\bS^1$ and inner ellipse $(\widehat{M})^{-1}(C_\bnu)= \cS_e^f$.
Thus $\overline{P}_\bnu=P_e^f$, \Eq{PefDefine}.

We notice that the transformation $T_f$ is linear and orientation-preserving. Given that
$E(f)=T_f^{-1}(\bS^1)$ and $E(e)=T_f^{-1}(\cS_e^f)$, we find that
$T_f^{-1}\circ \overline{P}_\bnu\circ T_f$ is the Poncelet map
with outer ellipse $E(f)$ and inner ellipse $E(e)$. The only possibility is that
$T_f^{-1}\circ \overline{P}_\bnu\circ T_f=B_e^f$.

Combining these results with \Th{ConfocalCover} gives the following commutative diagram,
where $e$ is the eccentricity of the pencil and $f=f(\bnu)$ is given by \Eq{Pencilf}.
\[
    \xymatrix @R=3pc @C=3pc {
     {\bR }\ar[r]^{{\widetilde B}_e^f} \ar[d]_{\pi_e^f}\ar@/_2.5pc/[ddd]_{\Pi_M} &
     \ar@/^2.5pc/[ddd]^{\Pi_M} {\bR }\ar[d]^{\pi_e^f}\\
     E(f)\ar[d]_{T_f} \ar[r]^{B_e^f}& E(f)\ar[d]^{T_f} \\
     \bS^1\ar[d]_{\widehat{M}} \ar[r]^{\overline{P}_\bnu=P_e^f} & \bS^1\ar[d]^{\widehat{M}} \\
     \cC_1\ar[r]^{P_\bnu} & \cC_1\\
    }
\]
This implies \Eq{CoverConjugacy}.
\end{proof}

We are finally ready to prove \Th{PencilRotation} from \Sec{Introduction}:

\begin{proof}[Proof of \Th{PencilRotation} (Pencil Rotation Number)]
Lemma~\ref{lem:PencilPoncelet} implies that if $\bnu$ is valid \Eq{nuRestrictions}, then
$\widetilde{B}_e^f(\theta)=\theta+\rhoin(e,f)$, is a lift of $P_\bnu$ under the cover $\Pi_M$.
Recalling the billiard rotation number $\rhoin$ and $\omegain$ from \Eq{BilliardRotationNumber}, we can substitute
$f$ from \Eq{Pencilf} to obtain $\omega(\bnu)=\omegain(e,f(\bnu))$, which gives \Eq{PencilRotation}.
This concludes the proof of \Th{PencilRotation}.
\end{proof}

%%%%%%
\begin{remark}
  If we write $B_\bnu=\widetilde{B}_e^{f(\bnu)}$, we get the standard
  commutative diagram in \Sec{IntroStdCovering}. In this case
  $B_\bnu(\theta) = \theta+\hat{\rho}(e,f(\bnu)) = \theta + \rho(\bnu)$.
\end{remark}
%%%%%%

\begin{cor}\label{cor:EquivariantSym}
There exists an equivariant symmetry $\Phi_t: \cC_1 \to \cC_1$ such that
	$P_{\bnu}\circ \Phi_t=\Phi_t\circ P_{\bnu}$, for all $t \in \bR$ and all valid $\bnu$.
\end{cor}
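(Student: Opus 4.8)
The corollary is an immediate consequence of \Lem{PencilPoncelet}: the key point is that the standard covering map $\Pi_M:\bR\to\cC_1$ of \Def{StdCover} depends only on the matrix $M$ and the pencil eccentricity $e$, \emph{not} on $\bnu$, and that under this single covering \emph{every} Poncelet map $P_\bnu$ lifts to the rigid translation $\widetilde B_e^{f(\bnu)}(\theta)=\theta+\rho(\bnu)$. So the strategy is to push forward, via $\Pi_M$, the obvious translation flow on $\bR$.

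First I would define $R_t:\bR\to\bR$ by $R_t(\theta)=\theta+t$. This one-parameter flow commutes with the generator $\theta\mapsto\theta+1$ of the deck group of $\Pi_M$, so the flow descends: I define $\Phi_t:\cC_1\to\cC_1$ by $\Phi_t(\Pi_M(\theta))=\Pi_M(\theta+t)$, and check well-definedness --- if $\Pi_M(\theta_1)=\Pi_M(\theta_2)$ then $\theta_2=\theta_1+n$ for some $n\in\bZ$, whence $\Pi_M(\theta_2+t)=\Pi_M((\theta_1+t)+n)=\Pi_M(\theta_1+t)$. Since $\Pi_M$ is a covering map, $\Phi_t$ is smooth, $\Phi_0=\id$, $\Phi_{t}\circ\Phi_{s}=\Phi_{t+s}$, and by construction $\Pi_M\circ R_t=\Phi_t\circ\Pi_M$.

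Next I would verify equivariance. Fix a valid $\bnu$ and set $f=f(\bnu)$. Using $\Pi_M\circ R_t=\Phi_t\circ\Pi_M$, the identity $P_\bnu\circ\Pi_M=\Pi_M\circ\widetilde B_e^f$ from \Eq{CoverConjugacy}, and the fact that the two translations $R_t$ and $\widetilde B_e^f$ of $\bR$ commute, I compute
\[
	P_\bnu\circ\Phi_t\circ\Pi_M
	= P_\bnu\circ\Pi_M\circ R_t
	= \Pi_M\circ\widetilde B_e^f\circ R_t
	= \Pi_M\circ R_t\circ\widetilde B_e^f
	= \Phi_t\circ\Pi_M\circ\widetilde B_e^f
	= \Phi_t\circ P_\bnu\circ\Pi_M .
\]
Since $\Pi_M$ is surjective, this gives $P_\bnu\circ\Phi_t=\Phi_t\circ P_\bnu$ for all $t\in\bR$, as desired, and the argument is uniform in $\bnu$.

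There is essentially no serious obstacle here: the only points requiring care are the well-definedness of $\Phi_t$ on $\cC_1$ (handled by the commutation of $R_t$ with the deck transformation) and the observation --- already established in \Lem{PencilPoncelet} --- that a \emph{single} covering map $\Pi_M$ works simultaneously for all $\bnu$, so that the same flow $\Phi_t$ serves as a symmetry of every $P_\bnu$. One may also remark that $\Phi_t$ is exactly the push-forward under $\widehat M\circ T_f$ (for any choice of valid $\bnu$, independently of which) of the pendulum symmetry flow $\Phi$ of \Sec{EllipticBilliards}, which gives a concrete geometric picture of the symmetry; but this remark is not needed for the proof.
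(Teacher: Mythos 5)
Your proof is correct and follows essentially the same route as the paper: define $\Phi_t$ by pushing forward the translation flow $\theta\mapsto\theta+t$ through the single covering map $\Pi_M$, then use the fact that every $P_\bnu$ lifts to a rigid rotation under that same cover. The paper's version is terser (it omits the explicit check of well-definedness, which you supply via commutation with the deck group), so your write-up is, if anything, slightly more careful than the original.
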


\begin{proof}
For each $t$, we can define $\Phi_t: \cC_1 \to \cC_1$ in terms of the covering map  \Eq{stdcover} as
\[
	\Phi_t(\Pi_M(\theta))=\Pi_M(\theta+t).
\]
Given an element $z_0\in\cC_1$, there exists a $\theta_0\in\bR$ such that $z_0=\Pi_M(\theta_0)$. Then
$\Phi_t(z_0)=\Pi_M(\theta_0+t)$ and $P_{\bnu}( z_0)=\Pi_M(\theta_0+\rho(\bnu))$ and hence
\[
	P_{\bnu}( \Phi_t(z_0))=P_{\bnu}\left(\Pi_M(\theta+t)\right)=\Pi_M(\theta_0+t+\rho(\bnu))
		=\Phi_t(P_{\bnu}( z_0)),
\] for all $t \in \bR$.
\end{proof}

%%%%%%%%%%%%%%%%%%%%%%%%%%%%%%%%%%%%%%%%%%%%%%%%%%%%%%%%%%%%%
%%%% Explicit Rotation
%%%%%%%%%%%%%%%%%%%%%%%%%%%%%%%%%%%%%%%%%%%%%%%%%%%%%%%%%%%%%
\subsection{Explicit Rotation Number and Inverse Parameter Conditions}\label{sec:ExplicitRotation}
In this section we will prove \Th{PencilParam}, thus showing how to solve the inverse problem:
given a rotation number, find an element of a pencil whose Poncelet map
has that rotation number.
As usual we let $C_\bnu$ be a pencil \Eq{PencilSet} of nested ellipses and
assume that $\bnu$ satisfies \Eq{nuRestrictions}.

\begin{proof}[Proof of \Th{PencilParam} (Inverse Parameter Conditions)]
 By \Lem{PencilPoncelet}, the Poncelet map $P_\bnu:\cC_1 \to \cC_1 $
 is conjugate to $B_e^f$, where $f$ is given by \Eq{Pencilf}. The rotation
 number for the billiard map is $\rhoin(e,f)=\ell \in (0,\tfrac12)$
 if and only if $f=e\,\cd(2K(e)\ell,e)$, by \Lem{porism}.
 Thus $P_\bnu$ has rotation number $\ell$ if and only if
 \beq{CanonicalForm}
	\cd^2 (2K(e)\ell,e) =\frac{f^2}{e^2}=\frac{\nu_1+\lambda_3\nu_2}{\nu_1+\lambda_2\nu_2}.
 \eeq
Solving for $\nu_1/\nu_2$, and using standard Jacobi elliptic function identities then
gives the promised ratio \Eq{InverseRotationMap}.
Since the $\cC_\bnu$ depends only on the ratio $\nu_1/\nu_2$, we can, choose the parameters as
\bsplit{ParamCanonical}
	 \nu_1(\ell)&=\lambda_1\cn^2(2K(e)\ell,e)-\lambda_3, \\
	  \nu_2(\ell)&=\sn^2(2K(e)\ell,e) .
\esplit
These parameters trace the line segment
\beq{LineSegment}
	\nu_1(\ell)+\lambda_1 \nu_2(\ell)=\lambda_1-\lambda_3
\eeq
that was sketched in \Fig{PencilRegion}.
Note that $\nu_2(\ell)>0$ and $\nu_1(\ell)+\lambda_3 \nu_2(\ell)=(\lambda_1-\lambda_3)\cn^2(2K(e)\ell,e)>0$.
Then \Lem{interior} implies that each $\cC_{\bnu(\ell)}$ is an interior ellipse of the pencil.
Thus $P_{\bnu(\ell)}$ is conjugate to rigid rotation using the standard covering in \Lem{PencilPoncelet}.
\end{proof}
%%%%%%

This result is similar to the explicit porism given in \Lem{porism}
for the billiard map, which determined
the eccentricity $f$ of the outer ellipse for a given inner
ellipse $e$ and rotation number $\ell$.
As a simple example, we generalize the case $\ell = \tfrac14$ from \Ex{OneFourth}:

\begin{example}[Rotation number $\tfrac14$]
Suppose that $C_\bnu$ is a pencil of nested ellipses \Eq{PencilSet} with eigenvalues
$\lambda_1>\lambda_2>\lambda_3>0$.
Then the Poncelet map $P_{\bnu^*}:\cC_1\to\cC_1$ with
\beq{nuStar}
	\bnu^* = \left(\sqrt{(\lambda_1-\lambda_3)(\lambda_2-\lambda_3)}-\lambda_3, 1 \right)
\eeq
has rotation number $\rho(\bnu^*)=\tfrac14$.
We verify this using  \Eq{PencilRotation}.
Substituting $\bnu^*$ into $\omega(\bnu)$ gives
\[
 	\omega({\bnu}^*)
 %		=\arcsin\sqrt{\frac{\left(\lambda _1-\lambda _3\right) \nu^*_2}{\lambda _1 \nu^*_2+\nu^*_1}}
 		=\arcsin\sqrt{\frac{\left(\lambda _1-\lambda _3\right)}{(\lambda_1-\lambda_3)
		 +\sqrt{(\lambda_1-\lambda_3)(\lambda_2-\lambda_3)}}}
	  =\arcsin\left(\frac{1}{\sqrt{1+e'}}\right) ,
\]
where we have used the eccentricity \Eq{PencilEccentricity}, and defined $e'=\sqrt{1-e^2}$.
This is identical to the $\omega$ computed in \Ex{OneFourth}, which gives \Eq{raro} and thus, by \Eq{PencilRotation}, $\rho({\bnu^*})=\tfrac14$.

Alternatively, a direct use of \Eq{ParamCanonical} is possible. Using
\cite[Form. 122.10]{Byrd1971}, we find that
\[
 \nu_1(1/4)=\lambda_1\cn^2(\tfrac12K(e),e)-\lambda_3=\lambda_1\left(\frac{e'}{1+e'}\right)-\lambda_3,
\]
and
\[
\nu_2(1/4)=\sn^2(\tfrac12K(e),e)=\frac{1}{1+e'} .
\]
A simple computation shows that $\bnu^*=(1+e')\big( \nu_1(1/4), \nu_2(1/4)\big)$, so  $\rho({\bnu^*})=\tfrac14$.
\end{example}
%%%%%

Given a valid parameter $\bnu$, satisfying \Eq{nuRestrictions}, and a corresponding ellipse $\cC_\bnu$,
we can define a \emph{dual} parameter $\overline{\bnu}$ so that the Poncelet map for the new ellipse
$\cC_{\overline{\bnu}}$ has a rotation number that ``complements'' the old rotation number.
In particular, this is useful if we want to identify new poristic ellipses.

%%%%%%%%%%
\begin{thm}[Dual Parameters] \label{thm:DualParam}
Assume that $\bnu=(\nu_1,\nu_2)$ satisfies \Eq{nuRestrictions},
and define the \emph{dual parameter vector}
\beq{ComplementParams}
	\overline{\bnu}= \big(-\lambda_3 \nu_1+ (\lambda_1 \lambda_2-\lambda_1 \lambda_3-\lambda_2 \lambda_3)\nu_2,
 		\,\nu_1+\lambda_3 \nu_2\big).
\eeq
Then $ \overline{\bnu}$ satisfies \Eq{nuRestrictions}, and $\overline{\overline{\bnu}} =\delta \bnu $, where $\delta=(\lambda_1- \lambda_3)(\lambda_2- \lambda_3)>0$.
%In particular, $\cC_{\overline{\overline{\bnu}}}=\cC_\bnu$.
Moreover, if the Poncelet maps $P_\bnu$ and $P_{\overline{\bnu}}$ have rotation numbers $\rho(\bnu)$
and $\rho(\overline{\bnu})$, respectively, then
\[
	\rho(\bnu)+ \rho(\overline{\bnu})=\tfrac{1}{2}.
\]
Thus in particular, $\cC_\bnu$ is poristic if and only if $\cC_{\overline{\bnu}}$ is poristic.
\end{thm}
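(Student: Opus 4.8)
The plan is to dispatch the three claims in order --- validity of $\overline{\bnu}$, the identity $\overline{\overline{\bnu}}=\delta\bnu$, and the complementarity $\rho(\bnu)+\rho(\overline{\bnu})=\tfrac12$ --- after which the poristic statement is immediate. The first two are pure (linear) algebra; the third is the substantive step and rests on \Th{PencilRotation} together with the classical complementary relation for the incomplete elliptic integral of the first kind.

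For validity one simply reads the inequalities \Eq{nuRestrictions} off \Eq{ComplementParams}. The second component $\overline{\nu}_2=\nu_1+\lambda_3\nu_2$ is positive by the hypothesis on $\bnu$, and
\[
	\overline{\nu}_1+\lambda_3\overline{\nu}_2
	 =\big(\lambda_1\lambda_2-\lambda_1\lambda_3-\lambda_2\lambda_3+\lambda_3^2\big)\nu_2
	 =(\lambda_1-\lambda_3)(\lambda_2-\lambda_3)\,\nu_2=\delta\,\nu_2>0,
\]
since $\lambda_1,\lambda_2>\lambda_3>0$; hence \Lem{restrict} applies to $\overline{\bnu}$. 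For the involution property, note that $\bnu\mapsto\overline{\bnu}$ is the linear map with matrix
\[
	A=\begin{pmatrix} -\lambda_3 & \lambda_1\lambda_2-\lambda_1\lambda_3-\lambda_2\lambda_3\\ 1 & \lambda_3\end{pmatrix},
	\qquad \Tr A=0,\qquad \det A=-(\lambda_1-\lambda_3)(\lambda_2-\lambda_3)=-\delta ,
\]
so Cayley--Hamilton gives $A^2-(\Tr A)A+(\det A)I=0$, i.e. $A^2=\delta I$, whence $\overline{\overline{\bnu}}=A^2\bnu=\delta\bnu$ with $\delta>0$ (consistent with $\cC_{\overline{\overline{\bnu}}}=\cC_\bnu$, as the pencil depends only on $\nu_1/\nu_2$).

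For the rotation number I would use \Th{PencilRotation}, $\rho(\bnu)=F(\omega(\bnu),e)/2K(e)$ with $\sin^2\omega(\bnu)=(\lambda_1-\lambda_3)\nu_2/(\nu_1+\lambda_1\nu_2)$, and $e^2=(\lambda_1-\lambda_2)/(\lambda_1-\lambda_3)$ from \Eq{PencilEccentricity}. Substituting \Eq{ComplementParams} into \Eq{PencilRotation} and simplifying gives
\[
	\sin^2\omega(\overline{\bnu})=\frac{\nu_1+\lambda_3\nu_2}{\nu_1+\lambda_2\nu_2},\qquad
	\cos^2\omega(\bnu)=\frac{\nu_1+\lambda_3\nu_2}{\nu_1+\lambda_1\nu_2},\qquad
	1-e^2\sin^2\omega(\bnu)=\frac{\nu_1+\lambda_2\nu_2}{\nu_1+\lambda_1\nu_2},
\]
so that $\sin\omega(\overline{\bnu})=\cos\omega(\bnu)\big/\sqrt{1-e^2\sin^2\omega(\bnu)}$, all quantities positive since $\omega(\bnu),\omega(\overline{\bnu})\in(0,\tfrac{\pi}{2})$. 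Writing $u=F(\omega(\bnu),e)$ one has $\sn(u,e)=\sin\omega(\bnu)$, $\cn(u,e)=\cos\omega(\bnu)$, $\dn(u,e)=\sqrt{1-e^2\sin^2\omega(\bnu)}$, so this reads $\sin\omega(\overline{\bnu})=\cd(u,e)=\sn(K(e)-u,e)$; hence $F(\omega(\overline{\bnu}),e)=K(e)-u$, and dividing $F(\omega(\bnu),e)+F(\omega(\overline{\bnu}),e)=K(e)$ by $2K(e)$ yields $\rho(\bnu)+\rho(\overline{\bnu})=\tfrac12$. (Equivalently, route through \Lem{PencilPoncelet} and \Lem{porism}: with $f=f(\bnu)$ as in \Eq{Pencilf} the same algebra gives $f(\overline{\bnu})^2=(e^2-f^2)/(1-f^2)$, i.e. $f(\overline{\bnu})=e\,\cd\!\big(2K(e)(\tfrac12-\rho(\bnu)),e\big)$, and \Lem{porism} identifies $\rho(\overline{\bnu})=\tfrac12-\rho(\bnu)$.) The poristic statement follows at once: $\rho(\bnu)\in\bQ$ iff $\rho(\overline{\bnu})=\tfrac12-\rho(\bnu)\in\bQ$, and by \Lem{PencilPoncelet} each $P_\bnu$ is conjugate to a rigid rotation, hence poristic exactly when its rotation number is rational.

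The only step needing genuine care is the last one: keeping every amplitude in $(0,\tfrac{\pi}{2})$ and every rotation number in $(0,\tfrac12)$ so that the square roots, arcsines, and the identity $\sn(K-u)=\cd(u)$ are applied without sign ambiguity. Everything else is routine: a couple of linear-algebra identities and one elliptic-function manipulation.
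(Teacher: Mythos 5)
Your proof is correct and follows essentially the same route as the paper's. The paper dismisses the first two claims as trivial; you usefully spell them out, and your Cayley--Hamilton observation for $A^2=\delta I$ is a clean way to see the involution. For the complementarity, the paper's main text works at the level of $\cd^2$ and $\sn^2$ via the relation \Eq{CanonicalForm} from the proof of \Th{PencilParam}, while its footnote does exactly what you do: derive an $\omega$-level identity and invoke the complementary relation $F(\omega(\bnu),e)+F(\omega(\overline{\bnu}),e)=K(e)$. Your identity $\sin\omega(\overline{\bnu})=\cos\omega(\bnu)/\sqrt{1-e^2\sin^2\omega(\bnu)}$ is equivalent to the paper's $\cot\omega(\overline{\bnu})=\sqrt{1-e^2}\,\tan\omega(\bnu)$, and your parenthetical route through \Lem{porism} and \Eq{Pencilf} reproduces the main-text argument. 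The algebra $\overline\nu_1+\lambda_1\overline\nu_2=(\lambda_1-\lambda_3)(\nu_1+\lambda_2\nu_2)$ and the rest all check out, and you are right that the only care needed is keeping $\omega\in(0,\tfrac{\pi}{2})$ and $\ell\in(0,\tfrac12)$ so that the branch of $\arcsin$ and the identity $\cd(u,e)=\sn(K(e)-u,e)$ are applied unambiguously.
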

%%%%%%%%
%%%%%%%%
\begin{proof} The first two assertions are trivial.
From the proof of \Th{PencilParam},
$\rho(\overline{\bnu})=\ell'$ if and only if \Eq{CanonicalForm} holds for the substitutions $\ell \to \ell'$, $\bnu\to\overline{\bnu}$.
Substitution of \Eq{ComplementParams} into \Eq{CanonicalForm} and simplification then gives
\[
	\cd^2 (2K(e)\ell',e) = \frac{\overline{\nu}_1+\lambda_3\overline{\nu}_2}{\overline{\nu}_1+\lambda_2\overline{\nu}_2}=
	\frac{\lambda_1-\lambda_3}{\nu_1/\nu_2+\lambda_1}.
\]
If we assume that $\ell'=\tfrac12-\ell$, then \cite[Form. 122.03]{Byrd1971} gives
\[
	%\cd (2K(e)\ell',e) =
	\cd \left(2K(e)(\tfrac12-\ell),e\right) =\sn(2K(e)\ell,e).
\]
Thus we have shown that
\[
	\sn^2(2K(e)\ell,e) = \frac{\lambda_1-\lambda_3}{\nu_1/\nu_2+\lambda_1} \quad \Rightarrow
	\quad \frac{\nu_1}{\nu_2} = \frac{\lambda_1(1-\sn^2(2K(e)\ell,e)) -\lambda_3}{\sn^2(2K(e)\ell,e)},
\]
which is equivalent to \Eq{InverseRotationMap}.  %\Eq{ParamCanonical}.
 Thus $\rho(\bnu) = \ell$, and hence
$\rho(\bnu) + \rho(\overline\bnu) = \ell + \tfrac12 - \ell = \tfrac12$, as we wanted.\footnote
{
    Alternatively, it is possible to show that
    \(
    	\cot(\omega(\overline{\bnu}))=\sqrt{1-e^2}\tan(\omega(\bnu)).
    \)
    Using \cite[Form. 117.01]{Byrd1971}, we find that
    \(
    	F(\omega(\bnu),e)+F(\omega(\overline{\bnu}),e)=K(e),
    \)
    and therefore $\displaystyle \rho(\bnu)+ \rho(\overline{\bnu})=\tfrac{1}{2}.$
}

%Another proof is possible. A simple computation shows that
%\[
%\tan(\omega(\bnu))=\sqrt{\frac{(\lambda_1-\lambda_3)\nu_2}{\nu_1+\lambda_3\nu_2}},
%\qquad
%\cot(\omega(\overline{\bnu}))=\sqrt{\frac{(\lambda_2-\lambda_3)\nu_2}{\nu_1+\lambda_3\nu_2}}.
%\]
%We know that $\displaystyle\sqrt{1-e^2}=\sqrt{\frac{\lambda_2-\lambda_3}{\lambda_1-\lambda_3}}$. This implies that
%\[
%\cot(\omega(\overline{\bnu}))=\sqrt{1-e^2}\tan(\omega(\bnu)).
%\]
%Using formula [], we find that
%\[
%F(\omega(\bnu),e)+F(\omega(\overline{\bnu}),e)=K(e),
%\]
%and therefore $\displaystyle \rho(\bnu)+ \rho(\overline{\bnu})=\tfrac{1}{2}.$
\end{proof}

\noindent

To finish this section, we show that it is possible to identify poristic ellipses using
eigenvalues. Suppose that $(C_o,C_i)$ be a pair of matrices satisfying \Ass{SA1}-\ref{ass:SA3} and
that $\lambda_1>\lambda_2>\lambda_3>0$ are the corresponding eigenvalues.
If we define the parameters
\[
\bnu_o=(1,0),\quad \text{and}\quad \bnu_i=(0,1).
\]
then $\bnu_i$ is valid according to \Eq{nuRestrictions} and the ellipses become
$\cC_o=\cC_{\bnu_o}$ and $\cC_i=\cC_{\bnu_i}$.
The dual to $\bnu_i$, \Eq{ComplementParams} is then
 \[
  	\overline{\bnu}_i=\big(\lambda_1 \lambda_2-\lambda_1 \lambda_3-\lambda_2 \lambda_3,
 		\lambda_3\big).
 \]
This implies that the pair $(C_o,C_{\overline{\bnu}_i})$ also satisfies \Ass{SA1}-\ref{ass:SA3}, and
has the eigenvalues
\beq{explicitCompl}
	\overline{\lambda}_1=\lambda_2(\lambda_1-\lambda_3),\qquad
 	\overline{\lambda}_2=\lambda_1(\lambda_2-\lambda_3),\qquad
 	\overline{\lambda}_3=(\lambda_1-\lambda_3)(\lambda_2-\lambda_3).
\eeq
 In particular, $ \overline{\lambda}_1> \overline{\lambda}_2> \overline{\lambda}_3>0$.
Here are several examples using this formula.

%%%%%%
\begin{thm}\label{thm:LambdaConditions}
Let $(C_o,C_i)$ be defined as above so that $C_o^{-1}C_i$ has eigenvalues $\lambda_1 > \lambda_2 > \lambda_3 > 0$. 
If  $P_{\bnu_i}$ is the corresponding Poncelet map for $\cC_{\bnu_i}$, then we have the following poristic cases.
\begin{enumerate}
    \item $\rho(\bnu_i)=\tfrac14$ if and only if
    \beq{OneFourth2}
    	\lambda_1 \lambda_2-\lambda_1 \lambda_3-\lambda_2 \lambda_3=0.
    \eeq
    \item $\rho(\bnu_i)=\tfrac13$ if and only if
    \beq{OneThird2}
    	\frac{1}{\sqrt{\lambda_3}}=
    	\frac{1}{\sqrt{\lambda_1}}+
    	\frac{1}{\sqrt{\lambda_2}}.
    \eeq
    \item $\rho(\bnu_i)=\tfrac16$ if and only if
    \beq{OneSixth2}
    	\sqrt{1-\frac{\lambda_3}{\lambda_1}}+
    	\sqrt{1-\frac{\lambda_3}{\lambda_2}}=1.
    \eeq
\end{enumerate}
\end{thm}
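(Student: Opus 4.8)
The plan is to reduce each of the three poristic conditions to the inverse‑parameter relation \Eq{InverseRotationMap} of \Th{PencilParam}, specialized to $\bnu = \bnu_i = (0,1)$, and then to evaluate the relevant Jacobi elliptic functions at the special arguments $\tfrac12 K(e)$, $\tfrac13 K(e)$ (equivalently $\tfrac23 K(e)$), and $\tfrac16 K(e)$. Setting $\nu_1=0,\nu_2=1$ in \Eq{InverseRotationMap}, the condition $\rho(\bnu_i)=\ell$ becomes
\[
	\lambda_1 \cn^2(2K(e)\ell,e) - \lambda_3 = 0,
\]
so the whole theorem rests on computing $\cn$ (and, via standard identities, $\sn$ and $\dn$) at $u = 2K(e)\ell$ for $\ell \in \{\tfrac14,\tfrac13,\tfrac16\}$, together with the fact from \Lem{restrict}/\Lem{PencilPoncelet} that $\bnu_i$ is valid and that $e = e(C_o,C_i) = \sqrt{(\lambda_1-\lambda_2)/(\lambda_1-\lambda_3)}$. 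I would state this common reduction once at the start of the proof and then treat the three cases.

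\textbf{Case $\ell=\tfrac14$.} Here $u=\tfrac12 K(e)$, and \cite[Form. 122.10]{Byrd1971} gives $\cn^2(\tfrac12 K(e),e) = e'/(1+e')$ with $e'=\sqrt{1-e^2}$. The condition $\lambda_1\cn^2 = \lambda_3$ becomes $\lambda_1 e' = \lambda_3(1+e')$, i.e. $e' = \lambda_3/(\lambda_1-\lambda_3)$. Now substitute $e^2 = (\lambda_1-\lambda_2)/(\lambda_1-\lambda_3)$, so $e'^2 = 1-e^2 = (\lambda_2-\lambda_3)/(\lambda_1-\lambda_3)$; squaring the previous relation and clearing denominators yields $(\lambda_2-\lambda_3)(\lambda_1-\lambda_3) = \lambda_3^2$, which rearranges to $\lambda_1\lambda_2 - \lambda_1\lambda_3 - \lambda_2\lambda_3 = 0$, i.e. \Eq{OneFourth2}. (One should note this is exactly the first‑coordinate‑zero condition on $\overline{\bnu}_i$ from \Eq{explicitCompl}, which is a useful sanity check via \Th{DualParam}.)

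\textbf{Cases $\ell=\tfrac13$ and $\ell=\tfrac16$.} For $\ell=\tfrac13$ I would use $u = \tfrac23 K(e)$ and the value $\cn(\tfrac23 K(e),e)$; the cleanest route is via the half/third‑period formulas in \cite{Byrd1971} (e.g. the entries around Form. 122, or the third‑argument formulas), expressing $\cn^2(\tfrac23 K,e)$ rationally in $e'$, then imposing $\lambda_1\cn^2 = \lambda_3$ and substituting $e^2$ as above; careful algebra should collapse the result to the symmetric form $\lambda_3^{-1/2} = \lambda_1^{-1/2} + \lambda_2^{-1/2}$, which is recognizably the pencil version of \Eq{OneThird}. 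The $\ell=\tfrac16$ case follows either by the same direct evaluation at $u=\tfrac13 K(e)$, or—more in the spirit of \Sec{HalfRho}—by applying the half‑rotation/duality machinery: $\tfrac16 = \tfrac12\cdot\tfrac13$, and one can combine \Th{DualParam} (which exchanges $\ell \leftrightarrow \tfrac12-\ell$, relating $\tfrac13$ and $\tfrac16$ through their images and the eigenvalue map \Eq{explicitCompl}) with the $\ell=\tfrac13$ result to derive \Eq{OneSixth2}; I would present whichever is shorter. In all three cases the ``only if'' and ``if'' directions are simultaneous, since \Th{PencilParam} is an iff and $\cn^2$ is strictly monotone on the relevant interval so the ray \Eq{InverseRotationMap} is uniquely determined by $\ell$.

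\textbf{Main obstacle.} The conceptual content is entirely in \Th{PencilParam}, so the real work is bookkeeping: correctly locating and applying the right elliptic‑function identities from \cite{Byrd1971} for the arguments $\tfrac13 K(e)$ and $\tfrac16 K(e)$ (these involve solving low‑degree polynomials in $e'$ and choosing the correct root by positivity), and then verifying that after substituting $e^2 = (\lambda_1-\lambda_2)/(\lambda_1-\lambda_3)$ the resulting algebraic relation in $\lambda_1,\lambda_2,\lambda_3$ factors into the claimed clean form rather than a messier equivalent. I expect the $\ell=\tfrac16$ computation to be the most error‑prone, which is why routing it through the half‑rotation identity from \Lem{HalfRotation}/\Th{DualParam} rather than a brute‑force evaluation is attractive.
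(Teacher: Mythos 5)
Your Case $\tfrac14$ is complete and correct, though you take a direct route (evaluate $\cn^2(\tfrac12 K(e),e)=e'/(1+e')$ and substitute $e^2$) where the paper instead observes that $\rho(\bnu_i)=\tfrac14$ iff $\rho(\bnu_i)=\rho(\overline{\bnu}_i)$, which by \Th{DualParam} and the corollary after \Lem{monorotation} happens iff $\bnu_i=(0,1)$ and $\overline{\bnu}_i$ are parallel---i.e.\ iff the first component of \Eq{ComplementParams} vanishes, which is \Eq{OneFourth2} on the nose. Both are fine; the paper's version is a one-liner.

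The real gap is in Case $\tfrac13$. You propose to evaluate $\cn^2(\tfrac23 K(e),e)$ directly, asserting it can be expressed ``rationally in $e'$'' via ``third-period formulas.'' That is not true: the trisection values of Jacobi elliptic functions are \emph{not} rational in $e'$---Abel's trisection reduces to a quartic (not a linear or quadratic resolvent), and Byrd--Friedman contain no clean entry of the kind you envisage. Even with the later hedge (``solving low-degree polynomials in $e'$''), you have not carried the computation out, and the claim that it ``should collapse'' to \Eq{OneThird2} is exactly the hard step. The paper sidesteps trisection entirely: it uses \Eq{Pencilf} with $\bnu_i=(0,1)$ to write $f=e\sqrt{\lambda_3/\lambda_2}$ and $e^2=(\lambda_1-\lambda_2)/(\lambda_1-\lambda_3)$, rearranges the confocal poristic polynomial \Eq{OneThird} (which was derived geometrically in \App{Porisms}, not by trisection) to $(e/f-1)^2=(1-e^2)/(1-f^2)$, and substitutes. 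That substitution produces $(\sqrt{\lambda_2/\lambda_3}-1)^2=\lambda_2/\lambda_1$, hence \Eq{OneThird2}, without ever touching $\cn(\tfrac23K)$. You should replace your trisection sketch with this route, since it is the one that actually closes.

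Case $\tfrac16$: the duality route you list as an alternative ($\rho(\bnu_i)=\tfrac16 \Leftrightarrow \rho(\overline{\bnu}_i)=\tfrac13$, then apply \Eq{OneThird2} to the eigenvalues \Eq{explicitCompl}) is exactly the paper's proof, and is the right choice; the ``direct evaluation at $K/3$'' alternative you mention has the same trisection problem as Case $\tfrac13$ and should be dropped.
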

%%%%%%

\begin{proof}
For the first part, we notice that $\rho(\bnu_i)=\tfrac14$ if and only if
$\rho(\overline{\bnu}_i)=\tfrac14$.   This
happens if and only if  ${\bnu}_i$ and $\overline{\bnu}_i$ are parallel, and hence $\lambda_1 \lambda_2-\lambda_1 \lambda_3-\lambda_2 \lambda_3=0$.
Equivalently, $\bnu_i=(0,1)$ is parallel to $\bnu^*$ in \Eq{nuStar}.

Let $e$ be the eccentricity of the pencil \Eq{PencilEccentricity} and $f=e\sqrt{\lambda_3/\lambda_2}$. We know
that $\rho(\bnu_i)=\tfrac13$ when $e$ and $f$ solve the polynomial \Eq{OneThird}.
% $e^2-2ef+2ef^3-f^4=0$.
Upon rearrangement this gives $(1-f^2)(e-f)^2=f^2(1-e^2)$, or equivalently
\[
	 \left(\frac{e}{f}-1\right)^2= \frac{ 1-e^2}{ 1-f^2}.
\]
Substituting for  $e$ and $f$  then
%implies that $e/f=\sqrt{\lambda_2/\lambda_3}$ and$(1-e^2)/(1-f^2)=\lambda_2/\lambda_1$. This
implies that $\rho(\bnu_i)=\tfrac13$ if and only if
\[
	\left(\sqrt{\frac{\lambda_2}{\lambda_3}}-1\right)^2=\frac{\lambda_2}{\lambda_1},
\]
which gives \Eq{OneThird2} upon rearrangement.
%since $\lambda_1>\lambda_2>\lambda_3>0$
Finally, using the dual parameters \Eq{ComplementParams}, we know that  $\rho(\bnu_i)=\tfrac16$ if and only if $\rho(\overline{\bnu}_i)=\tfrac13$.
Noting that $(C_o,C_{\overline{\bnu}_i})$ has the eigenvalues \Eq{explicitCompl} and using these
in \Eq{OneThird2} gives the result \Eq{OneSixth2}.
% we conclude that
%  $\rho(\bnu_i)=\tfrac16$ is equivalent to
%   \[
%    \frac{1}{\sqrt{\overline{\lambda}_3}}=
%    \frac{1}{\sqrt{\overline{\lambda}_1}}+
%    \frac{1}{\sqrt{\overline{\lambda}_2}}.
%    \]
%After substitution of \Eq{explicitCompl}, we get the result.
\end{proof}

%%%%%
\begin{remark}
  Suppose that the generalized characteristic polynomial of the pencil is of the form
  \[
  	\det(\lambda C_1-C_2)=
   		\det( C_1)(\lambda-\lambda_1)(\lambda-\lambda_2)(\lambda-\lambda_3)
   		=\beta_0 \lambda^3+\beta_1 \lambda^2+\beta_2 \lambda+\beta_3.
  \]
  As discussed in \Sec{IntroPorism}, the Cayley condition is computed from the expansion
  of the square root of this polynomial around $\lambda = 0$, \Eq{CayleyExp}.
  % $\sqrt{ \det(\lambda C_1-C_2)}$ around $\lambda=0$, we get
 % \[
 %		\sqrt{ \det(\lambda C_1-C_2)}=\alpha_0+\alpha_1\lambda+\alpha_2\lambda^2+\cdots
 % \]
  For this case, the Cayley condition becomes
  \[
  		\cay_3  = \alpha_2 = \frac{\sqrt{\beta_3}}{8}\left(4\left(\frac{\beta_1}{\beta_3}\right)
			-\left(\frac{\beta_2}{\beta_3}\right)^2\right).
  \]
  Thus $\rho(\bnu_i)=\tfrac13$ if and only if $\alpha_2 = 0$, or $\beta^2_2=4\beta_1\beta_3$.
  Given that $\lambda_1>\lambda_2>\lambda_3>0$, it is not hard to see that this
  corresponds exactly with the condition that we found in the theorem above.
 \end{remark}
%%%%%

%%%%%
\begin{example}[Limiting cases]
The outer ellipse  of the pencil \Eq{PencilSet} corresponds to the parameter $\bnu_o = (1,0)$. The dual of
this point, using the definition \Eq{ComplementParams}, is $\overline{\bnu}_o=(-\lambda_3,1)$,
on the boundary of \Eq{nuRestrictions}. This corresponds to the limiting point
$\cC_{\overline{\bnu}_o} = \cC_\infty$ \Eq{LimitingPoint}.
Geometrically, it is clear that
\[
	 \rho(\bnu_o)= \rho(1,0)=0, \qquad \rho(\overline{\bnu}_o)=\rho(-\lambda_3,1)=\tfrac{1}{2}.
\]
This verifies that $\rho(\bnu_o)+\rho(\overline{\bnu}_o)=\frac{1}{2}$.
\end{example}
%%%%%

\begin{example}[Euler and Fuss bicentric conditions]
When the inner and outer ellipses are circles and the Poncelet map is poristic,
the resulting orbit is a ``bicentric polygon''. The cases of bicentric triangles and quadrilaterals
were famously studied by Chapple, Euler, and Fuss \cite{Dragovic11, Nohara12,Cieslak18}.
Here we show that the conditions of \Th{LambdaConditions} generalize these results.
%In the context of this article, it corresponds to a pair of poristic \emph{circles} .
%Clearly all regular polygons are bicentric.
Consider the circles
\begin{align*}
	\cC_1 &=\left\{(x,y)\in\bR^2:x^2+y^2=R^2\right\}, \\
	\cC_2&=\left\{(x,y)\in\bR^2:(x-a)^2+y^2=r^2\right\},
\end{align*}
where $a,r,R>0$.
Clearly $\cC_2\subset \interior(\cC_1)$ when $R > r + |a|$.
%$a+r<R$ and $a-r>-R$.
In particular, this implies that $|a^2-r^2|<R^2$.
These circles can be represented by the pair of matrices
\[
      C_1= \begin{pmatrix}
         1 & 0 & 0 \\
         0 & 1 & 0 \\
         0 & 0 & -R^2 \\
    \end{pmatrix},\qquad
    C_2=\begin{pmatrix}
         1 & 0 & -a \\
         0 & 1 & 0 \\
         -a & 0 & a^2-r^2 \\
    \end{pmatrix}.
\]
The characteristic polynomial $ \det(\lambda C_1-C_2)$ has the eigenvalues
\[
	\lambda_1=1,\quad \lambda_{2,3}=\frac{b\pm\sqrt{b^2-4 r^2 R^2}}{2 R^2},
\]
where $b=R^2+r^2-a^2>0$. It is easy to see that $\lambda_1>\lambda_2>\lambda_3>0$.

Using \Th{LambdaConditions} we can find the relations between $a,r$ and $R$ so that
the rotation numbers are $\tfrac14$ and $\tfrac13$.
These correspond to bicentric triangles and quadrilaterals, respectively.
In particular, \Eq{OneFourth2} gives
\[
    \lambda_1 \lambda_2-\lambda_1 \lambda_3-\lambda_2 \lambda_3=0=\frac{\sqrt{b^2-4 r^2 R^2}-r^2}{R^2}.
\]
With some algebra, we find that this holds if and only if
\[
	\frac{1}{(R-a)^2}+\frac{1}{(R+a)^2}=\frac{1}{r^2},
\]
which is precisely the conditions of Fuss.

For rotation number $\tfrac13$, \Eq{OneThird2} becomes
 \[
   \left(\frac{1}{\sqrt{\lambda_3}}-
    \frac{1}{\sqrt{\lambda_2}}\right)^2-    \frac{1}{\lambda_1}=0=\frac{R (R-2 r)-a^2}{r^2}.
    \]
 With some algebra, we find that this holds if and only if
\[
	\frac{1}{R-a}+\frac{1}{R+a}=\frac{1}{r}.
\]
This is the condition of Chapple-Euler.
\end{example}
%%%%
%%%%
\begin{example}
In this example, we consider a pencil of confocal ellipses generated using 
the two ellipses
\[
	\mathcal{Q}_0 = \left\{(x,y)\in\bR^2:\frac{x^2}{a^2}+\frac{y^2}{b^2}=1\right\},
	\qquad
	\mathcal{Q}_{\lambda_0} = \left\{(x,y)\in\bR^2:\frac{x^2}{a^2-\lambda_0^2}+\frac{y^2}{b^2-\lambda_0^2}=1\right\},
\]
where $a>b>\lambda_0>0$.
These ellipses have foci at $(\pm\sqrt{a^2-b^2},0)$, and 
the conditions on the parameters imply that $\mathcal{Q}_{\lambda_0} $ is in the interior of $\mathcal{Q}_{0}$.
Using the notation of \Sec{Standing}, we can represent these using the  matrices
\[
	C_1=\diag\big(a^{-2}, b^{-2},-1\big), \qquad 
	C_2=\diag\big((a^2-\lambda_0^2)^{-1},(b^2-\lambda_0^2)^{-1},-1\big).
\]
Since these are diagonal the three eigenvalues of the pencil are easy to find:
\[
	\lambda_1=\frac{b^2}{b^2-\lambda_0^2}, \qquad 
	\lambda_2=\frac{a^2}{a^2-\lambda_0^2}, \qquad 
	\lambda_3=1,
\]
where, as usual, we have chosen the labels so that $\lambda_1>\lambda_2>\lambda_3>0$. 
For this case the eccentricity $e$ of the pencil is
\[
	e=\sqrt{\frac{\lambda_1-\lambda_2}{\lambda_1-\lambda_3}}=\sqrt{\frac{a^2-b^2}{a^2-\lambda_0^2}}.
\]

Thus the rotation number for the pencil generated by $\mathcal{Q}_0$ and $\mathcal{Q}_{\lambda_0}$ 
is given by \Eq{PencilRotation} with this eccentricity and eigenvalues.
%\[
%	\rho(\bnu_i)=\frac{F(\omega(\bnu_i),e)}{2 K(e)},
%\]
For the case $\bnu_i=(0,1)$, where the inner ellipse is $\mathcal{Q}_{\lambda_0}$, this gives
%\[
%	\omega(\bnu_i)
%	 	=\arcsin\sqrt{\frac{\left(\lambda _1-\lambda _3\right) }{\lambda _1 }}=\arcsin\sqrt{\lambda_0/b}.
%\]
\[
	\rho(\bnu_i)=\frac{F(\arcsin(\lambda_0/b),e)}{2 K(e)}.
\]
This formula for the rotation number appears in several places in the literature, e.g., \cite{Chang88,Ramirez14,Ramirez17} and \cite[Eq. (8)]{Kaloshin18}.

Using the  formulas of \Th{LambdaConditions}, we can easily identify the values of $\lambda_0$ for 
for which the corresponding Poncelet map has rotation number $1/3, 1/4$, or $1/6$.
For example, \Eq{OneFourth2} implies that $\rho(\bnu_i) = 1/4$  if and only if
\(
	\lambda_0=\frac{ab}{\sqrt{a^2+b^2}}.
\)
Similarly, \Eq{OneSixth2} implies that the rotation number is $1/6$ if and only if 
\(
	\lambda_0=\frac{ab}{a+b}.
\)
These conditions of course match those originally found in \cite{Ramirez14}.
\end{example}
%%%%

%\newpage

%%%%%%%%%%%%%%%%%%%%%%%%%%%%%%%%%%%%%%%%%%%%%%%%%%%%%%%%%%%%%
%%% Monotonicity
%%%%%%%%%%%%%%%%%%%%%%%%%%%%%%%%%%%%%%%%%%%%%%%%%%%%%%%%%%%%%
\subsection{Monotonicity}\label{sec:Monotonicity}

Here we will generalize the results of \Lem{restrict} to show that the ellipses in the pencil are
sequentially nested if the parameter vectors are properly ordered. The following two lemmas will imply \Th{Monotonicity},
mentioned in the introduction.

%%%%%
\begin{lem}\label{lem:interior}
Assume that $C_1, C_2$ satisfy \Ass{SA1}-\ref{ass:SA3} and that $\bnu$ and $\bmu$ satisfy \Eq{nuRestrictions}.
Then, for the corresponding ellipses $\cC_\bmu ,\cC_\bnu$,
\[
	\cC_\bmu \subset \interior (\cC_\bnu)  \quad \Longleftrightarrow \quad \nu_1\mu_2-\nu_2\mu_1 > 0 ,
\]
i.e., the cross product $\bnu \times \bmu > 0$, for the vectors sketched in \Fig{PencilRegion}.
\end{lem}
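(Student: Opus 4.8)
The plan is to reduce everything to the diagonal normal form of \Lem{reduction} and then argue geometrically in the standard coordinates where $\cC_1$ is the unit circle. Using the matrix $M$ from \Eq{SylvesterM}, the projective map $\widehat M^{-1}$ (an orientation-preserving diffeomorphism by \Lem{orient}) transforms the pencil $\cC_\bnu$ into the explicit family \Eq{util1}, and sends interiors to interiors. So $\cC_\bmu\subset\interior(\cC_\bnu)$ holds if and only if the same containment holds for the transformed conics $\overline\cC_\bmu$ and $\overline\cC_\bnu$ in \Eq{util1}. Thus it suffices to prove the lemma for the standard pencil, where $\cC_\bnu$ is the ellipse $(\nu_1+\lambda_1\nu_2)X^2+(\nu_1+\lambda_2\nu_2)Y^2=\nu_1+\lambda_3\nu_2$.

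First I would observe that $\cC_\bmu\subset\interior(\cC_\bnu)$ is equivalent to the pointwise statement: for every $(X,Y)$ with $\bw^T(\mu_1C_1+\mu_2C_2)\bw=0$ one has $\bw^T(\nu_1C_1+\nu_2C_2)\bw<0$, where $\bw=(X,Y,1)^T$ and $C_1,C_2$ are the diagonal normal forms $\diag(1,1,-1)$ and $\diag(\lambda_1,\lambda_2,-\lambda_3)$. A clean way to test this: since $C_2-\lambda_3C_1=\diag(\lambda_1-\lambda_3,\lambda_2-\lambda_3,0)$ is positive semidefinite (from \Eq{EVOrdering}), we have $\bw^T(C_2-\lambda_3C_1)\bw\ge 0$ with equality only when the first two coordinates of $\bw$ vanish, which is impossible for $\bw=(X,Y,1)^T$ unless $(X,Y)=(0,0)$. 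Writing $C_\bmu=(\mu_1+\lambda_3\mu_2)C_1+\mu_2(C_2-\lambda_3C_1)$ and likewise for $\bnu$, and using the cross-product identity
\[
	(\nu_1+\lambda_3\nu_2)\,C_\bmu - (\mu_1+\lambda_3\mu_2)\,C_\bnu
		= (\nu_1\mu_2-\nu_2\mu_1)\,(C_2-\lambda_3C_1),
\]
one gets, on the locus $\bw^TC_\bmu\bw=0$ (where I may assume $(X,Y)\ne(0,0)$ so the right-hand quadratic form is strictly positive), that
\[
	(\mu_1+\lambda_3\mu_2)\,\bw^TC_\bnu\bw = -(\nu_1\mu_2-\nu_2\mu_1)\,\bw^T(C_2-\lambda_3C_1)\bw .
\]
Since $\mu_1+\lambda_3\mu_2>0$ by \Eq{nuRestrictions} and the quadratic form on the right is strictly positive there, the sign of $\bw^TC_\bnu\bw$ on all of $\cC_\bmu$ is exactly the sign of $-(\nu_1\mu_2-\nu_2\mu_1)$. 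Hence $\bw^TC_\bnu\bw<0$ everywhere on $\cC_\bmu$ precisely when $\nu_1\mu_2-\nu_2\mu_1>0$, which is the claimed equivalence; the connectedness of $\cC_\bmu$ then upgrades "$\cC_\bmu$ misses $\cC_\bnu$ and a point of $\cC_\bmu$ is inside $\cC_\bnu$" to "$\cC_\bmu\subset\interior(\cC_\bnu)$".

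I expect the only delicate point to be the bookkeeping of which coordinates can vanish: the identity argument shows the sign of $\bw^TC_\bnu\bw$ is constant on $\cC_\bmu$ \emph{provided} $\bw^T(C_2-\lambda_3C_1)\bw\ne 0$ there, and one must rule out the degenerate case $(X,Y)=(0,0)$ lying on $\cC_\bmu$ — but that would force $\mu_1+\lambda_3\mu_2=0$, contradicting \Eq{nuRestrictions}. Everything else is the linear algebra already assembled in \Lem{reduction} and \Lem{restrict}, so the proof is short once the reduction to diagonal form is invoked. As a sanity check one can verify the two extreme rays $\bnu=(1,0)$ (the outer circle) and $\bmu=(-\lambda_3,1)$ (the limiting point), for which the cross product is $1>0$ and indeed a point lies in the interior of every ellipse of the pencil.
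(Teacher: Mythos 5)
Your proof is correct, but it takes a genuinely different route from the paper's. You first pass to the diagonal normal form via $\widehat M^{-1}$, and then use the identity
\[
	(\nu_1+\lambda_3\nu_2)\,C_\bmu - (\mu_1+\lambda_3\mu_2)\,C_\bnu
		= (\nu_1\mu_2-\nu_2\mu_1)\,(C_2-\lambda_3C_1),
\]
so the decisive quadratic form on the right is the \emph{positive semidefinite} $C_2-\lambda_3C_1$, and you must rule out the degenerate kernel point (the origin in standard coordinates). The paper works directly with the original $C_1,C_2$ (no diagonalization needed) and instead uses the simpler linear combination $\mu_2\,C_\bnu - \nu_2\,C_\bmu = (\nu_1\mu_2-\nu_2\mu_1)\,C_1$; then on $\cC_\bmu$, where $\bu^T C_\bmu\bu = 0$, one has $\mu_2\,\bu^TC_\bnu\bu = (\bnu\times\bmu)\,\bu^TC_1\bu$, and $\bu^TC_1\bu<0$ is immediate from \Ass{SA3} applied to $\cC_\bmu\subset\interior(\cC_1)$ (via \Lem{restrict}). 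That choice of combination is strictly cheaper: it avoids both the change of coordinates and the semidefinite-form bookkeeping, and it makes the degenerate case disappear because $\bu^TC_1\bu$ is strictly negative everywhere on $\cC_\bmu$. Also, your concluding sentence about connectedness upgrading a local condition to containment is superfluous -- once you know $\bw^TC_\bnu\bw<0$ at every point of $\cC_\bmu$, that \emph{is} the statement $\cC_\bmu\subset\interior(\cC_\bnu)$ by definition, with nothing further to argue.
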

%%%%%

\begin{proof}
First, we assume that $\nu_1\mu_2-\nu_2\mu_1 > 0$.
Let $(x,y)\in \cC_\bmu$ and $\bu=(x,y,1)$.
By assumption, $\bu^T C_\bmu \bu=0$. This implies that $\bu^T C_1 \bu<0$
because $\cC_\bmu \subset \interior (\cC_1)$. We will show that $\bu^T C_\bmu \bu<0$.
A simple computation shows that
\[
\mu_2\left(\bu^T C_\bnu \bu\right)-\nu_2\left(\bu^T C_\bmu \bu\right)=(\nu_1\mu_2-\nu_2\mu_1)\bu^T C_1 \bu.
\]
By assumption, $\mu_2>0$ and hence $\bu^T C_\bnu \bu<0$.  A similar computation shows the converse.
\end{proof}

As we noted in \Fig{PencilRegion}, the rotation number is a monotone increasing function along
lines in the $\bnu$ plane that start on the $\nu_1$-axis and end on the ray $\nu_1+\lambda_3\nu_2 = 0$.
This result follows easily from \Eq{PencilRotation}:

%%%%%
\begin{lem}\label{lem:monorotation}
  Let $C_1, C_2$  and $\bnu,\bmu$ be defined as above. Then
  $\rho(\bmu) > \rho(\bnu)$ if and only if $\bnu \times \bmu > 0$.
\end{lem}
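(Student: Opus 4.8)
The plan is to reduce everything to the explicit formula \Eq{PencilRotation} for $\rho(\bnu)$ and to the monotonicity of the incomplete elliptic integral. Recall that
\[
	\rho(\bnu)=\frac{F(\omega(\bnu),e)}{2K(e)},\qquad
	\omega(\bnu)=\arcsin\sqrt{\frac{(\lambda_1-\lambda_3)\nu_2}{\lambda_1\nu_2+\nu_1}},
\]
where $e$ is fixed (it depends only on the eigenvalues, not on $\bnu$). Since $F(\cdot,e)$ is a strictly increasing function on $[0,\tfrac{\pi}{2}]$ and $\arcsin\sqrt{\,\cdot\,}$ is strictly increasing on $[0,1]$, the inequality $\rho(\bmu)>\rho(\bnu)$ is equivalent to
\[
	\frac{(\lambda_1-\lambda_3)\mu_2}{\lambda_1\mu_2+\mu_1}
	>\frac{(\lambda_1-\lambda_3)\nu_2}{\lambda_1\nu_2+\nu_1}.
\]
So the first step is just to peel off the monotone functions $F$ and $\arcsin\sqrt{\,\cdot\,}$ and note that we only need to compare these two rational expressions in $\bnu$ and $\bmu$.

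The second step is the algebra. Since $\lambda_1-\lambda_3>0$ we may cancel that factor; and by \Eq{nuRestrictions} both denominators $\lambda_1\nu_2+\nu_1$ and $\lambda_1\mu_2+\mu_1$ are strictly positive (indeed $\nu_1+\lambda_1\nu_2>\nu_1+\lambda_3\nu_2>0$, and likewise for $\bmu$). Clearing denominators, the inequality becomes
\[
	\mu_2(\lambda_1\nu_2+\nu_1)>\nu_2(\lambda_1\mu_2+\mu_1),
\]
and the $\lambda_1\mu_2\nu_2$ terms cancel, leaving $\mu_2\nu_1>\nu_2\mu_1$, i.e. $\nu_1\mu_2-\nu_2\mu_1>0$, which is exactly $\bnu\times\bmu>0$. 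This chain of equivalences is reversible at every step, so $\rho(\bmu)>\rho(\bnu)\iff\bnu\times\bmu>0$, as claimed.

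There is essentially no hard part here: the only things to be careful about are (a) checking that the quantity under the square root in $\omega(\bnu)$ genuinely lies in $[0,1)$ so that $\omega(\bnu)\in[0,\tfrac{\pi}{2})$ and the monotonicity of $F(\cdot,e)$ applies — this follows from $0<\nu_1+\lambda_3\nu_2$ giving $(\lambda_1-\lambda_3)\nu_2<\lambda_1\nu_2+\nu_1$ together with $\nu_2>0$; and (b) confirming the denominators are positive before cross-multiplying, which again is immediate from \Eq{nuRestrictions} and $\lambda_1>\lambda_3$. Combined with \Lem{interior}, which identifies the condition $\bnu\times\bmu>0$ with the geometric containment $\cC_\bmu\subset\interior(\cC_\bnu)$, this lemma completes the proof of \Th{Monotonicity}.
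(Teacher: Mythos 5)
Your argument is correct and is essentially the same as the paper's: both reduce the claim to comparing the rational argument of $\arcsin\sqrt{\,\cdot\,}$ in \Eq{PencilRotation}, and both verify via \Eq{nuRestrictions} that the denominators are positive so that clearing them is reversible and the difference has the sign of $\nu_1\mu_2-\nu_2\mu_1$. The only cosmetic difference is that the paper states the chain of monotonicities in one direction and says the converse is similar, whereas you make the reversibility of each step explicit and add a useful check that $\omega(\bnu)\in[0,\tfrac{\pi}{2})$.
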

%%%%%

\begin{proof}
Assume that $\bnu \times \bmu > 0$.
Since the elliptic integral $F(\omega,e)$ in \Eq{PencilRotation}
is a monotone increasing function of $\omega$ and $\arcsin$ is a
monotone increasing function on the interval $[0,1]$, it is sufficient to
show that its argument in \Eq{PencilRotation} is also an increasing function.
Note that
\[
	 \frac{\mu_2}{\lambda_1 \mu_2+\mu_1} -\frac{\nu_2}{\lambda_1 \nu_2+\nu_1}
	 = \frac{\nu_1\mu_2-\nu_2\mu_1 }{(\lambda _1 \nu _2+\nu _1)(\lambda _1 \mu _2+\mu _1)} >0,
\]
since the denominators above are positive by \Eq{nuRestrictions} and
$\nu_1\mu_2-\nu_2\mu_1 >0$ by assumption. This implies that
the argument of the $\arcsin$ in \Eq{PencilRotation} is monotone increasing.
Thus $\omega(\bmu)>\omega(\bnu)$, and hence  $\rho(\bmu) > \rho(\bnu)$.
The converse is shown in a similar way.
\end{proof}

%%%%%
\begin{cor}
  Two valid parameters $\bmu,\bnu$ satisfy $\rho(\bmu) =\rho(\bnu)$
  if and only if they are parallel.
\end{cor}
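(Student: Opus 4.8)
The plan is to read this off directly from \Lem{monorotation}, which already gives the full trichotomy relating the sign of the cross product to the ordering of the rotation numbers. First I would record the elementary fact that for valid parameters $\bnu,\bmu$ the quantity $\bnu\times\bmu=\nu_1\mu_2-\nu_2\mu_1$ vanishes exactly when $\bnu$ and $\bmu$ are parallel, and that validity --- in particular $\nu_2>0$ and $\mu_2>0$ from \Eq{nuRestrictions} --- forces the proportionality constant to be \emph{positive} when they are parallel, so that ``$\bnu$ parallel to $\bmu$'' and ``$\bmu=\delta\bnu$ with $\delta>0$'' are interchangeable in this setting.

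For the forward implication I would argue by contraposition: assume $\bnu$ and $\bmu$ are not parallel, so $\bnu\times\bmu\neq 0$. If $\bnu\times\bmu>0$, then \Lem{monorotation} gives $\rho(\bmu)>\rho(\bnu)$; if $\bnu\times\bmu<0$, then $\bmu\times\bnu>0$ and the same lemma, applied with the roles of $\bmu$ and $\bnu$ interchanged, gives $\rho(\bnu)>\rho(\bmu)$. In either case $\rho(\bmu)\neq\rho(\bnu)$, so $\rho(\bmu)=\rho(\bnu)$ forces $\bnu\times\bmu=0$, i.e., the vectors are parallel. For the reverse implication, if $\bmu=\delta\bnu$ with $\delta>0$, then the argument of the $\arcsin$ in \Eq{PencilRotation} is unchanged, since it is homogeneous of degree zero in $(\nu_1,\nu_2)$; hence $\omega(\bmu)=\omega(\bnu)$ and $\rho(\bmu)=\rho(\bnu)$. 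Equivalently, parallel parameter vectors define the same pencil element $\cC_\bmu=\cC_\bnu$ and therefore the same Poncelet map.

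I do not expect any real obstacle here: the entire content is already packaged in \Lem{monorotation}, and the only point that needs care is the bookkeeping of signs --- namely that the failure of ``$\bnu\times\bmu>0$'' which is not ``parallel'' is exactly ``$\bmu\times\bnu>0$,'' so that both strict inequalities are covered by the lemma together with its mirror image, while the degenerate case $\bnu\times\bmu=0$ is precisely the equality case.
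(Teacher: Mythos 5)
Your proof is correct, and it is exactly the argument the paper leaves implicit: the corollary is stated without proof immediately after Lemma~\ref{lem:monorotation} precisely because it is the equality case of the trichotomy that lemma establishes. Your contrapositive reading of the forward direction (the two strict cases $\bnu\times\bmu>0$ and $\bmu\times\bnu>0$ both give $\rho(\bmu)\neq\rho(\bnu)$) is the right way to extract it, and your two independent justifications of the reverse direction --- degree-zero homogeneity of the argument of $\arcsin$ in \Eq{PencilRotation}, or the fact that parallel valid $\bnu$ define the same conic $\cC_\bnu$ and hence the same Poncelet map --- are both sound; the observation that $\nu_2,\mu_2>0$ pins the proportionality constant to be positive is a small but worthwhile point of care. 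No gaps.
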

%%%%%

Together \Lem{interior} and \Lem{monorotation} are equivalent to \Th{Monotonicity} in \Sec{Introduction}.

%%%%%%%%%%%%%%%%%%%%%%%%%%%%%%%%%%%%%%%%%%%%%%%%%%%%%%%%%%%%%%%%%%%
%% Full Poncelet
%%%%%%%%%%%%%%%%%%%%%%%%%%%%%%%%%%%%%%%%%%%%%%%%%%%%%%%%%%%%%%%%%%%
\section{Full Poncelet Theorem}\label{sec:GeneralPoncelet}

As usual, let $\cC_{\bnu}$ the pencil generated by $\cC_1$ and $\cC_2$, so that the parameters
$\bnu$ satisfy \Eq{nuRestrictions}. In this section we consider compositions of
Poncelet maps for different elements of the pencil.

%%%%%
\begin{thm}\label{thm:Commuting}
 Let $P_{\bnu}$, $P_{\bmu}$ be Poncelet maps for valid parameters $\bnu,\bmu$, \Eq{nuRestrictions}.
Then
\begin{enumerate}
 \item $P_{\bnu}\circ P_{\bmu}$ has a lift using the covering $\Pi_M$ of the form $\theta\mapsto \theta+\rho(\bmu)+\rho(\bnu)$;
 \item the maps commute: $P_{\bnu}\circ P_{\bmu}=P_{\bmu}\circ P_{\bnu}$.
\end{enumerate}
\end{thm}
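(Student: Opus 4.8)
The plan is to reduce everything to the standard covering map $\Pi_M$ of \Def{StdCover}, where each Poncelet map in the pencil becomes a rigid rotation. By \Lem{PencilPoncelet}, for each valid parameter $\bnu$ the map $P_\bnu$ has a lift $\widetilde B_e^{f(\bnu)}(\theta)=\theta+\rho(\bnu)$ under $\Pi_M$, where $e$ is the pencil eccentricity and $f(\bnu)$ is given by \Eq{Pencilf}. The crucial point is that a \emph{single} covering map $\Pi_M$ works simultaneously for all valid $\bnu$, since $\Pi_M$ depends only on the pencil eccentricity $e$ and the diagonalizing matrix $M$, neither of which depends on $\bnu$. This is exactly what makes the composition argument go through cleanly.

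For part \textit{i)}, I would argue as follows. Let $\widetilde P_\bnu(\theta)=\theta+\rho(\bnu)$ and $\widetilde P_\bmu(\theta)=\theta+\rho(\bmu)$ be the lifts of $P_\bnu$ and $P_\bmu$ under $\Pi_M$ given by \Lem{PencilPoncelet}; that is, $P_\bnu\circ\Pi_M = \Pi_M\circ\widetilde P_\bnu$ and similarly for $\bmu$. Then
\[
	(P_\bnu\circ P_\bmu)\circ \Pi_M = P_\bnu\circ(\Pi_M\circ\widetilde P_\bmu)
	= (P_\bnu\circ\Pi_M)\circ\widetilde P_\bmu = \Pi_M\circ(\widetilde P_\bnu\circ\widetilde P_\bmu),
\]
so $\widetilde P_\bnu\circ\widetilde P_\bmu$ is a lift of $P_\bnu\circ P_\bmu$ under $\Pi_M$. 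But $\widetilde P_\bnu\circ\widetilde P_\bmu(\theta)=\theta+\rho(\bmu)+\rho(\bnu)$, which is the claimed form.

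For part \textit{ii)}, the rigid-rotation lifts commute trivially: $\widetilde P_\bnu\circ\widetilde P_\bmu(\theta)=\theta+\rho(\bmu)+\rho(\bnu)=\widetilde P_\bmu\circ\widetilde P_\bnu(\theta)$. By the computation in part \textit{i)} (and its analogue with $\bnu$ and $\bmu$ interchanged), both $P_\bnu\circ P_\bmu$ and $P_\bmu\circ P_\bnu$ are covered by the same translation $\theta\mapsto\theta+\rho(\bmu)+\rho(\bnu)$, i.e. $(P_\bnu\circ P_\bmu)\circ\Pi_M = (P_\bmu\circ P_\bnu)\circ\Pi_M$. Since $\Pi_M:\bR\to\cC_1$ is surjective (it is a covering map), this forces $P_\bnu\circ P_\bmu = P_\bmu\circ P_\bnu$ on all of $\cC_1$. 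The main obstacle, such as it is, is purely a matter of bookkeeping: one must be careful that the covering map $\Pi_M$ in \Lem{PencilPoncelet} is genuinely independent of $\bnu$ (it is, by \Def{StdCover}), so that the two lifts can be composed over the \emph{same} cover; once that is noted the argument is essentially immediate, and the full Poncelet theorem \Th{GeneralPoncelet} then follows by iterating part \textit{i)} and invoking the fact that a translation by an integer covers the identity.
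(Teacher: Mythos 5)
Your proof is correct and follows essentially the same route as the paper: reduce to the common cover $\Pi_M$ (which depends only on $e$ and $M$, not on $\bnu$), observe that each $P_\bnu$ lifts to the rigid translation $\theta\mapsto\theta+\rho(\bnu)$ by \Lem{PencilPoncelet}, and conclude by composing translations. The paper's proof is terser—it just evaluates $P_\bnu(P_\bmu(\Pi_M(\theta)))$ directly—but your added explicitness about surjectivity of $\Pi_M$ in part \textit{ii)} is the same argument spelled out more carefully.
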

%%%%%

\begin{proof}
The result follows from \Lem{PencilPoncelet}.
Letting $\Pi_M$ \Eq{stdcover} be the standard covering map, then for each $\theta\in\bR$,
\[
	P_{\bnu}\left(P_{\bmu}(\Pi_M(\theta))\right)=
	P_{\bnu}\left(\Pi_M(\theta+\rho(\bmu))\right)=
	\Pi_M( \theta+\rho(\bmu)+\rho(\bnu)).
\]
Clearly, this also implies that $P_{\bnu}, P_{\bmu}$ commute.
\end{proof}

\begin{remark}
Note that \Th{Commuting} implies \Th{GeneralPoncelet} in \Sec{IntroStdCovering} directly.
\end{remark}

%%%%%
\begin{example}\label{ex:DualPoncelet}
Let $\overline{\bnu}$ be the dual of any valid parameter $\bnu$, recall \Th{DualParam}.
From \Th{Commuting} the composition $P_{\overline{\bnu}}\circ P_\bnu$
is a circle map that has rotation number $\tfrac12$. For each pencil, there is only one
Poncelet map with that rotation number and it has the limiting point $\cC_\infty$ as caustic.
Denote this map as $P_\infty=P_{\overline{\bnu}}\circ P_\bnu$.
Since this map has rotation number is $\tfrac12$ the composition
\[
  	P_\infty\circ P_\infty = P_{\overline{\bnu}}\circ P_\bnu\circ P_{\overline{\bnu}}\circ P_\bnu = id.
\]
This implies that the inverse of $P_\bnu$ can be written as
\[
	\left( P_\bnu\right)^{-1}=P_\infty\circ P_{\overline{\bnu}}=P_{\overline{\bnu}}\circ P_\infty.
\]
An example is sketched in \Fig{DualPoncelet}.
\end{example}
%%%%%%

%%%%%
\InsertFig{DualPoncelet}{Composition of a Poncelet Map and its dual as in \Ex{DualPoncelet}. The left
panel shows a map $P_\bnu$ with $\rho(\bnu) = \tfrac{3}{14}$. The middle panel shows
the map $P_{\overline{\bnu}}$ with $\rho(\overline{\bnu}) = \tfrac12 - \rho(\bnu) = \tfrac27$.
The composition $P_\infty=P_{\overline{\bnu}}\circ P_\bnu$, shown in the right panel as the dashed line,
goes through the limiting ellipse (the point in the figure) and thus has
rotation number $\tfrac12$. The pencil for this example has eigenvalues $\left\{\frac{1}{5},\frac{1}{8},\frac{1}{9}\right\}
$, and eccentricity $e = \sqrt{\frac{27}{32}}$ as in \Fig{Conjugacy}, however a different
map $\widehat{M}$ \Eq{mhat} was used.}{DualPoncelet}{7in}
%%%%%

As a corollary of these results we have obtained the general theorem of Poncelet.

%%%%%
\begin{cor}[Poncelet]\label{cor:Poncelet}
Suppose that $z_0,z_1\ldots,z_N\in \cC_1 $. Then there
exist a sequence of valid parameters $\bnu^1,\bnu^2,\ldots,\bnu^N$  and signs
$\sigma_1,\sigma_2,\ldots,\sigma_N\in\{1,-1\}$ such that,
 \begin{enumerate}
     \item For each $k=1,\ldots,N$ the segment $\overrightarrow{z_{k-1}z_{k}\,}$ is tangent
     to a unique ellipse $\cC_{\bnu^k}$. The orientation is positive if $\sigma_k=1$ and negative if $\sigma_k=-1$.

     \item The Poncelet maps $P_{\bnu^k}:\cC_1 \to \cC_1 $
     satisfy %$ P_{\bnu^k}(z_{k-1})=z_k$ or
     $ \left(P_{\bnu^k}\right)^{\sigma_k}(z_{k-1})=z_{k}$.
     \item If $z_0=z_N$, then $P_* \equiv (P_{\bnu^N})^{\sigma_N}\circ (P_{\bnu^{N-1}})^{\sigma_{N-1}}\circ \cdots \circ(P_{\bnu^1})^{\sigma_1}=\id$.
 \end{enumerate}
\end{cor}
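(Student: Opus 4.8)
The plan is to pull everything back to the standard covering $\Pi_M:\bR\to\cC_1$ of \Def{StdCover} and reduce the statement to arithmetic mod $1$ of rotation numbers. The two facts I would record first are: (i) for a valid $\bnu$ one has $(P_\bnu)^{+1}(z)=w$ exactly when $\overrightarrow{zw}$ is tangent to $\cC_\bnu$ with positive orientation, and $(P_\bnu)^{-1}(z)=w$ exactly when $\overrightarrow{zw}$ is tangent to $\cC_\bnu$ with negative orientation — this is immediate from the definition \Eq{Pmap} of the Poncelet map, since $P_\bnu^{-1}$ exchanges the two tangent rays to $\cC_\bnu$ through an exterior point; and (ii) as $\bnu$ runs over the valid cone \Eq{nuRestrictions}, the rotation number $\rho(\bnu)$ of \Th{PencilRotation} runs over all of $(0,\tfrac12)$, each value attained on a unique ray. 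For (ii), note that the $\arcsin$-argument in \Eq{PencilRotation} equals $(\lambda_1-\lambda_3)\nu_2\big/\big((\lambda_1-\lambda_3)\nu_2+(\nu_1+\lambda_3\nu_2)\big)$, which is continuous on the connected cone and takes every value in $(0,1)$, so $\omega(\bnu)$ sweeps $(0,\tfrac\pi2)$ and $\rho(\bnu)$ sweeps $(0,\tfrac12)$, with uniqueness of the ray supplied by \Th{PencilParam} (or monotonicity, \Lem{monorotation}). It is also convenient to adjoin the two boundary rays $(1,0)$ and $(-\lambda_3,1)$, whose ``conics'' are $\cC_1$ itself and the limiting point $\cC_\infty$ \Eq{LimitingPoint}, with rotation numbers $0$ and $\tfrac12$; ``tangency'' to the degenerate conic $\cC_\infty$ is read as ``passing through'' it.

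Given $z_0,\dots,z_N\in\cC_1$, I would pick lifts $\theta_0,\dots,\theta_N\in\bR$ with $\Pi_M(\theta_k)=z_k$ and, for each $k$, let $\delta_k\in[0,1)$ be the representative of $\theta_k-\theta_{k-1}$ mod $1$. I then choose the data as follows: if $\delta_k\in(0,\tfrac12)$, set $\sigma_k=+1$ and let $\bnu^k$ be the ray from \Th{PencilParam} with $\rho(\bnu^k)=\delta_k$; if $\delta_k\in(\tfrac12,1)$, set $\sigma_k=-1$ and let $\bnu^k$ have $\rho(\bnu^k)=1-\delta_k$, so $\sigma_k\rho(\bnu^k)\equiv\delta_k\pmod1$; if $\delta_k=0$ (i.e.\ $z_{k-1}=z_k$) take $\bnu^k=(1,0),\ \sigma_k=+1$, and if $\delta_k=\tfrac12$ take $\bnu^k=(-\lambda_3,1),\ \sigma_k=+1$. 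In every case the lift of $(P_{\bnu^k})^{\sigma_k}$ under $\Pi_M$ is $\theta\mapsto\theta+\sigma_k\rho(\bnu^k)$ (inverting the lift from \Lem{PencilPoncelet} when $\sigma_k=-1$), so that
\[
	(P_{\bnu^k})^{\sigma_k}(z_{k-1})=\Pi_M\big(\theta_{k-1}+\sigma_k\rho(\bnu^k)\big)=\Pi_M(\theta_k)=z_k,
\]
which is assertion (2). Assertion (1) then follows from fact (i): $\overrightarrow{z_{k-1}z_k}$ is tangent to $\cC_{\bnu^k}$ with the orientation dictated by the sign of $\sigma_k$. For the uniqueness in (1), if a valid $\cC_\bmu$ were also tangent to $\overrightarrow{z_{k-1}z_k}$, that chord would be one of the (at most two) tangent lines to $\cC_\bmu$ through the exterior point $z_{k-1}$, forcing $z_k\in\{P_\bmu(z_{k-1}),P_\bmu^{-1}(z_{k-1})\}$, hence $\rho(\bmu)\in\{\delta_k,1-\delta_k\}\cap(0,\tfrac12)=\{\rho(\bnu^k)\}$; since equal rotation numbers force parallel parameters (\Lem{monorotation}), $\cC_\bmu=\cC_{\bnu^k}$.

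For assertion (3), suppose $z_0=z_N$, so $\theta_N-\theta_0\in\bZ$ and therefore
\[
	\sum_{k=1}^N\sigma_k\rho(\bnu^k)\equiv\sum_{k=1}^N\delta_k\equiv\theta_N-\theta_0\equiv0\pmod1,
\]
i.e.\ $\sum_k\sigma_k\rho(\bnu^k)\in\bZ$. By \Th{GeneralPoncelet} (equivalently \Th{Commuting}), the composition $P_*=(P_{\bnu^N})^{\sigma_N}\circ\cdots\circ(P_{\bnu^1})^{\sigma_1}$ has lift $\theta\mapsto\theta+\sum_k\sigma_k\rho(\bnu^k)$ under $\Pi_M$; this lift is an integer translation, hence descends to the identity on $\cC_1$, so $P_*=\id$.

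The only real friction I anticipate is the bookkeeping behind (ii) — checking that the valid rotation numbers fill exactly $(0,\tfrac12)$ so that \Th{PencilParam} always applies — together with the two boundary cases $\delta_k\in\{0,\tfrac12\}$, where the caustic degenerates to $\cC_1$ or to the point $\cC_\infty$ and ``tangency'' must be interpreted accordingly. Everything else is a formal consequence of the covering-space description of the Poncelet maps and the additivity property of \Th{Commuting} and \Th{GeneralPoncelet}.
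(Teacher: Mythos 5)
Your proposal follows essentially the same covering-space strategy as the paper's proof: lift $z_0,\ldots,z_N$ to $\bR$ via $\Pi_M$, read off a sign $\sigma_k$ and a rotation number $\ell_k\in(0,\tfrac12]$ from each $\theta$-increment, invoke \Th{PencilParam} (equivalently \Eq{ParamCanonical}) to identify the pencil parameter $\bnu^k$, and conclude $P_*=\id$ from the fact that the total $\theta$-increment is an integer. You are somewhat more careful than the paper on three points it treats implicitly or in passing: you verify that $\rho(\bnu)$ actually sweeps all of $(0,\tfrac12)$ on the valid cone, you explicitly handle the degenerate increments $\delta_k\in\{0,\tfrac12\}$ (the paper simply assumes $0<\ell_k\le\tfrac12$), and you supply an argument for the uniqueness claim in part (1) via \Lem{monorotation}, whereas the paper only exhibits a valid $\bnu^k$ without proving it unique.
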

%%%%%

\begin{proof}
Using the standard covering \Eq{stdcover}, we can represent the points $z_0,\ldots,z_N$ as
\[
	z_k=\Pi_M(\theta_k),
\]
for suitable values of $\theta_k\in\bR$. Without loss of generality, we can assume that
or each $k=1,\ldots,N$,
\[
	0< \ell_k \equiv \left|\theta_{k}-\theta_{k-1}\right|\leq \tfrac{1}{2}, % \quad k=0,\ldots,N-1.
\]
and define $\sigma_k\in\{-1,1\}$ by
\[
	\theta_{k}-\theta_{k-1}=\sigma_k\,\ell_k.
\]
Using \Eq{ParamCanonical}, we then have
\[
	\bnu^k= \big( \nu_1(\ell_{k}), \nu_2(\ell_{k})\big)
		=\left(\lambda_1\cn^2(2K(e)\ell_{k},e)-\lambda_3,\,\sn^2(2K(e)\ell_{k},e)\right).
\]
By \Th{PencilParam}, $P_{\bnu^k}$ has rotation number $\rho(\bnu^k)=\ell_k$. In fact,
\[
	\left(P_{\bnu^k}\right)^{\sigma_k}(z_{k-1})=\Pi_M(\theta_{k-1}+\sigma_k\,\ell_k)=\Pi_M(\theta_{k})=z_{k}.
\]

Recalling \Fig{PonceletMap}, for each $z\in\cC_1$, the segment $\overrightarrow{zP_\bnu(z)}$
is tangent to $\cC_{\bnu}$ with positive  orientation,
and the segment $\overrightarrow{zP_\bnu^{-1}(z)}$ is tangent to $\cC_{\bnu}$ with negative orientation.
This implies that the segment  %$k=1,\ldots,N$,
$\overrightarrow{z_{k-1}z_{k}\,}$ is tangent to  $\cC_{\bnu^k}$, for each $k$, with positive/negative
orientation for $\sigma_k= \pm 1$.

Finally, we notice that
\[
	\theta_N-\theta_0=\sum_{k=1}^{N}\left( \theta_{k}-\theta_{k-1}\right)=\sum_{k=1}^{N}\sigma_k\,\ell_k.
\]
Consequently, if $z_0=z_N$, then $\theta_N-\theta_0\in\bZ$, and since
$\sum_{k=1}^{N}\sigma_k\,\ell_k$ is the rotation number of the composition, we have
that the rotation number of $P_*$ is an integer. Consequently
%$P_*=(P_{\bnu^1})^{\sigma_1}\circ (P_{\bnu^2})^{\sigma_2}\circ \cdots \circ (P_{\bnu^N})^{\sigma_N}$.
%If that rotation number is an integer then $P_*$
$P_*$ is the identity.
\end{proof}

\begin{remark}
The symmetry of the Poncelet map can be applied to any finite sequence $z_0,\ldots, z_N$, to give a
one-parameter family of orbits that will have the same caustics and orientations as the original.
Using the notation of the previous proof, define $\Theta_k(t)=\theta_k+t$ and
\beq{Zkt}
 	Z_k(t)=\Pi_M(\Theta_k(t)) = \Phi_t(z_k),
\eeq
where $\Phi_t:\cC_1 \to\cC_1 $ is the equivariant symmetry of \Cor{EquivariantSym}.

Note that, for each $t$ and each $k$,
$
	\Theta_{k}(t)-\Theta_{k-1}(t)=\sigma_k\,\ell_k.
$
Thus \Cor{Poncelet} implies that for each $t$ the segment $\overrightarrow{Z_{k-1}(t) Z_{k}(t)\,}$
is tangent to $\cC_{\bnu^k}$,
with the same orientation as $\overrightarrow{z_{k-1}z_{k}}$. In addition,
\[
	\left(P_{\bnu^k}\right)^{\sigma_{k}}(Z_{k-1}(t))=Z_{k}(t).
\]
\end{remark}

\begin{example}\label{ex:Composition}
To illustrate how the equivariant symmetry interacts with Poncelet theorem, \Fig{Composition} shows a composition of
Poncelet maps for a pencil with outer ellipse $\cC_1$ and a pair of inner ellipses
$\cC_{\bnu^1}$ and  $\cC_{\bnu^2}$ chosen so that the rotation numbers are
$\rho(\bnu^1) = \ell_1= \tfrac15$ and $\rho(\bnu^2) = \ell_2 = \tfrac{3}{10}$.
In the left panel of \Fig{Composition} a point $z_0$ is iterated
to obtain the images
\[
	z_1=P_{\bnu^1}(z_0),\quad z_2=(P_{\bnu^2})^{-1}(z_1),\quad
	z_3=P_{\bnu^1}(z_2),\quad z_4=P_{\bnu^1}(z_3),\quad
	z_5=z_0=(P_{\bnu^2})^{-1}(z_4).
\]
Using the notation of the previous corollary, we have $\bnu^1=\bnu^3=\bnu^4$,
$\bnu^2=\bnu^5$, $\sigma_1=\sigma_3=\sigma_4=1$, and
$\sigma_2=\sigma_5=-1$.
The center and right panels of \Fig{Composition} show the shape of the orbit with $Z_k(t)$ given by \Eq{Zkt} for two values of $t$.
\end{example}

%%%%%
\InsertFig{Composition}{Composition of a pair of Poncelet maps $P_{\bnu^1}$ and $P_{\bnu^2}$with rotation numbers
$\tfrac15$ and $\tfrac{3}{10}$, as in \Ex{Composition}. The resulting map has period $5$, so $P_* = id$. The center and
right panels show the translations of the orbit for $t=0.25$ and $t=0.75$ using the symmetry \Eq{Zkt}. } {Composition}{7in}
%%%%%

\begin{example}\label{ex:Cubic}
To illustrate \Cor{Poncelet}, we choose rotation numbers that are in the real cubic field $\mathbb{Q}(x)$ generated by
the single real root of the irreducible polynomial
\[
	21 x^3 + 14 x^2 +7 x - 8.
\]
The numbers
\bsplit{CubicEll}
	\ell_1 &= 3x^3 \approx 0.27910109553533, \\
	\ell_2 &= 2 x^2 \approx 0.41063584162374,\\
	\ell_3 &= x \approx 0.45312020569808
\esplit
generate the field, but are irrational and each pair is necessarily incommensurate:
there are no integers $(m,n,k)$ for which $m \ell_i + n \ell_j = k$, for $i,j \in  \{1,2,3\}$.
Choosing $\bnu^i$ using \Eq{ParamCanonical}, we obtain three Poncelet maps $P_{\bnu^i}$ with irrational rotation numbers.
For example, the first $90$ iterates of an orbit of $P_{\bnu^2}$ are shown in the left pane of  \Fig{CubicCombo},
and $78$ iterates of the alternating composition of $P_{\bnu^1}$ and $P_{\bnu^3}$ are shown in the middle pane.
Neither of these orbits will close, since the corresponding rotation numbers are irrational.
However, since $\ell_1+\ell_2+\ell_3 = \tfrac87$,
the composition $P_{\bnu^3}\circ P_{\bnu^2} \circ P_{\bnu^1}$ has rotation number $\tfrac87$. This map is shown
in the right panel of  \Fig{CubicCombo}. Defining $z_k$ as in \Cor{Poncelet}, we see that $z_{21} = z_0$ for any
starting point.
\end{example}

%%%%%
\InsertFig{CubicCombo}{Poncelet maps with rotation numbers in a cubic field as discussed in \Ex{Cubic}.
(Left) The map $P_{\bnu^2}$ with rotation number $\ell_2$ \Eq{CubicEll}.
(Middle) The map $P_{\bnu^3}\circ P_{\bnu^1}$ with rotation number $\ell_3+\ell_1$.
Since the rotation numbers are incommensurate, there is no combination of only two of the maps,
$P_{\bnu^1}$, $P_{\bnu^2}$, $P_{\bnu^3}$, that will be poristic.
(Right) The map $P_{\bnu^3}\circ P_{\bnu^2} \circ P_{\bnu^1}$ with rotation number $\tfrac87$.
Here the pencil is the same as that in \Fig{DualPoncelet}.}
{CubicCombo}{6.8in}
%%%%%

%\newpage

%%%%%%%%%%%%%%%%%%%%%%%%%%%%%%%%%%%%%%%%%%%%%%%%%%%%%%%%%%%%%
%%%% Conclusions
%%%%%%%%%%%%%%%%%%%%%%%%%%%%%%%%%%%%%%%%%%%%%%%%%%%%%%%%%%%%%
%\newpage
\section{Conclusions}
In this paper we have studied Poncelet's theorem from a dynamical systems point of view.
In \Sec{BilliardMaps} we analyzed elliptic billiard maps and found a formula for the rotation number.
We began by recalling the known symmetry of the elliptic billiard that implies its integrability.
This symmetry gives a  conjugacy for each orbit with an inner elliptical caustic to a map that
is a rigid translation on a covering space.
This resulted in an explicit expression for
the rotation number of the circle maps corresponding to Poncelet dynamics for the case of
confocal ellipses. The rotation number was written in terms of Jacobi elliptic functions
with modulus equal to the eccentricity of the inner ellipse, recall \Eq{BilliardRotationNumber}.

We showed in \Sec{PonceletMaps} that there is a conjugacy between the confocal case and the general Poncelet map for
elements of a pencil of ellipses. As we noted in \Sec{EllipticPencils},
the fundamental parameters of such a pencil are its three eigenvalues.
In particular, these determine the eccentricity of the pencil, \Eq{PencilEccentricity},
which again appears as the modulus of the elliptic functions that give the rotation number
for any element of the pencil, \Eq{PencilRotation}. In \Th{PencilParam} we obtained an explicit form for the choice of pencil element that gives a Poncelet map with a given rotation number.

When the rotation number is rational, all orbits of the Poncelet map are periodic. These continuous families of polygons are known as Poncelet porisms. Using the explicit form of the rotation number, we reproduced well-known explicit expressions for some of the low period porisms that arise from the classical conditions of Cayley, both for the confocal case in \Sec{Poristic} and for general pencils in \Sec{ExplicitRotation}.
We found that the conditions for porisms can be written solely in terms of
the eigenvalues of the pencil.

Finally, in \Sec{GeneralPoncelet} we showed how the general Poncelet theorem, for maps
with an arbitrary sequence of inner ellipses chosen from a pencil,
follows from the fact that a single covering space simplifies all the Poncelet maps in a pencil.
We gave an explicit expression \Eq{stdcover} for the covering space in terms of the projective
map \Eq{mhat} and the pencil eccentricity \Eq{PencilEccentricity}.

%%%%%%%%%%%%%%%%%%%%%%%%%%%%%%%%%%%%%%%%%%%%%%%%%%%%%%%%%%%%%
%%%% Appendices
%%%%%%%%%%%%%%%%%%%%%%%%%%%%%%%%%%%%%%%%%%%%%%%%%%%%%%%%%%%%%
\newpage
\appendix
\begin{center}
\Large{\textbf{Appendices}}
\end{center}

%%%%%%%%%%%%%%%%%%%%%%%%%%%%%%%%%%%%%%%%%%%%%%%%%%%%%%%%%%%%
%%%% Poristic Examples
%%%%%%%%%%%%%%%%%%%%%%%%%%%%%%%%%%%%%%%%%%%%%%%%%%%%%%%%%%%%%
\section{Confocal Poristic Examples}\label{app:Porisms}
In this appendix we recall two classical formulas for Poristic ellipses.
We consider a pair of confocal ellipses $E(e)$ and $E(f)$ with eccentricities $e$ and $f$ so that $0 < f < e < 1$.

%%%%%%%%%%%%%%
\subsection{Rotation number $\tfrac13$}

We can show that the rotation number is $\tfrac13$ when the eccentricities are in the set \Eq{OneThird}.
%\beq{OneThird}
%	\poris{1/3}=\{(e,f)\in\Delta:e^2+2 e f^3-2 e f-f^4=0\}.
%\eeq
To find such a pair of ellipses we should find an orbit whose segments form a triangle.
Consider the three points
\begin{align*}
	z_0&= \tfrac{1}{e} \left(1,-\tfrac{1}{f}\sqrt{(1-f^2)(e^2-f^2)}\right),\\
 	z_1&=\tfrac{1}{e} \left(1,\tfrac{1}{f}\sqrt{(1-f^2)(e^2-f^2)}\right),\\
	z_2&=\left(-\tfrac{1}{f},0\right),
\end{align*}
on $E(f)$. Clearly, the directed segment $\overrightarrow{z_0 z_1}$ is tangent
to the $E(e)$ at $(\tfrac{1}{e},0)$. To produce a triangle, we need to adjust $e$ so that
both segments $\overrightarrow{z_1 z_2}$ and $\overrightarrow{z_2 z_0}$
are tangent to $E(e)$.
To solve this problem, substitute the segment $(1-t)z_1 + tz_2$ into $E(e)$ to give the function
\[
	h(t)=e^2\big((1-t)x_1+tx_2\big)^2+\left(\frac{e^2}{1-e^2}\right)\big((1-t)y_1+ty_2\big)^2-1.
\]
The segment $\overrightarrow{z_1 z_2}$ is tangent to $E(e)$ if and only if $h$ has a double root in the interval
$(0,1)$, so that the discriminant of $h$ is necessarily zero---this is exactly \Eq{OneThird}.
We conclude that the segment $\overrightarrow{z_1 z_2}$ is tangent to $E(e)$ if and only if \Eq{OneThird}
is satisfied. By symmetry, $\overrightarrow{z_2 z_0}$ is also tangent to $E(e)$. This implies that
$\{z_0,z_1,z_2\}$ is an orbit of $B_e^f$ and hence $\rhoin(e,f)=1/3$.

%%%%%%%%%%%%%%%
\subsection{Cayley's Solution for Period$-5$ orbits}
As mentioned in \Sec{IntroPorism}, the zero level sets of the polynomial $\cay_N$
correspond to Poncelet porisms with period $N$ \cite{Griffiths78, Dragovic98a, Dragovic98b}.
If $N=5$, the result allows us to compute a polynomial $\cay_5(e,f)$
such that
\[
	\cay_5^{-1}\{0\}\cap \Delta=	\poris{1/5}\cup \poris{2/5}.
\]
If the sets $\poris{1/5}$ and $\poris{2/5}$ can be written as zeros of polynomials,
then they have to be factors of $\cay_5$. Using symbolic manipulation, we found
\begin{align*}
	\poris{1/5} =\{(e,f)\in\Delta&: e^6+8 e^5 f^5-10 e^5 f^3+2 e^5 f-4 e^4 f^6+5 e^4 f^4\\
				&-4 e^4 f^2-12 e^3 f^7+12 e^3 f^5+4 e^2 f^{10}-5 e^2 f^8\\
 				&+4 e^2 f^6-2 e f^{11}+10 e f^9-8 e f^7-f^{12}=0\}, \\
	\poris{2/5}=\{(e,f)\in\Delta&: e^6-8 e^5 f^5+10 e^5 f^3-2 e^5 f-4 e^4 f^6+5 e^4 f^4\\
				&-4 e^4 f^2+12 e^3 f^7-12 e^3 f^5+4 e^2 f^{10}-5 e^2 f^8\\
				&+4 e^2 f^6+2 e f^{11}-10 e f^9+8 e f^7-f^{12}=0\}.
\end{align*}
Using formula \Eq{poristic},
we can verify numerically that $\poris{1/5}$ and $\poris{2/5}$ are the poristic sets.

%%%%%%%%%%%%%%%%%%%%%%%%%%%%%%%%%%%%%%%%%%%%%%%%%%%%%%%%%%%%%
%%%% Asymptotic Limits
%%%%%%%%%%%%%%%%%%%%%%%%%%%%%%%%%%%%%%%%%%%%%%%%%%%%%%%%%%%%%
\section{Asymptotic Limits}\label{app:SingularLimit}
For each $0<f<1$ we want to show that
\[
	\lim_{e\to 1^-}\rhoin(e,f)=\tfrac{1}{2},
\]
as mentioned in \Rem{limits}.
By \Th{BilliardRotationNumber}, for each $f<e<1$,
\[
	0 < \frac{1}{2}-\rhoin(e,f)=\frac{K(e)-F(\omegain(e,f),e)}{2K(e)}
		=\frac{1}{2K(e)}\int_{\omegain(e,f)}^{\pi/2}\frac{d\tau}{\sqrt{1-e^2\sin^2\tau}},
\]
where $\omegain(e,f)$ is given in \Eq{BilliardRotationNumber}.
%\[
%\omegain(e,f)=\arcsin\sqrt{\frac{e^2- f^2}{e^2(1-f^2)}}.
%\]
The denominator in the integral above is monotone decreasing on
%$\frac{1}{\sqrt{1-e^2\sin^2\tau}}$ is increasing on
$[0,\pi/2]$, with a minimum value of $\sqrt{1-e^2}$. Therefore,
\[
	\left|\frac{1}{2}-\rhoin(e,f)\right|\leq
		\frac{1}{2K(e)}\left(\frac{\pi/2-\omegain(e,f)}{\sqrt{1-e^2}}\right).
\]
A simple computation then gives
\[
	\lim_{e\to 1^-}\frac{\pi/2-\omegain(e,f)}{\sqrt{1-e^2}}=\frac{\sqrt{1-f^2}}{f}<\infty.
\]
Therefore, since $K(e)\to\infty$, as $e\to1^-$, we conclude that
\(\displaystyle
	\lim_{e\to 1^-}\rhoin(e,f)=\tfrac{1}{2}.
\)

In \Rem{DeltaFoliation}, we mentioned the following limit
\[
\lim_{e\to 1^-}e\,\cd(2K(e)\ell,e)=1.
 \]
This can be shown, using results in \cite[Thm. 3]{Carlson83} and \cite[Lem. 2]{Siegel2016}.
In fact, fixing $\ell\in(0,1/2)$, we have the following asymptotic expansions,
as $e\to 1^-$.
\[
%	\cn(2K(e)\ell,e)\sim \dn(2K(e)\ell,e)\sim 2^{1-\ell}(1-e^2)^\ell.
	\cn(2K(e)\ell,e) =2^{1-\ell}(1-e^2)^\ell+o\left((1-e^2)^\ell\right),
\]
\[
	\dn(2K(e)\ell,e)= 2^{1-\ell}(1-e^2)^\ell+o\left((1-e^2)^\ell\right).
\]
These imply the limit above.

%%%%%%%%%%%%%%%%%%%%%%%%%%%%%%%%%%%%%%%%%%%%%%%%%%%%%%%%%%%%%
%%%% Elliptic Deriivatives
%%%%%%%%%%%%%%%%%%%%%%%%%%%%%%%%%%%%%%%%%%%%%%%%%%%%%%%%%%%%%
\section{Derivatives of the billiard rotation number}\label{app:Derivatives}

To show that the rotation number is a monotone function of $e$ and $f$, we first
obtain some related results for elliptic functions.
The \emph{Jacobi epsilon function} is defined as
\[
	\cE(u,e)=\int_{0}^{u}\dn^2(\tau,e) \,d\tau = E(\am(u,e),e),
\]
where $E$ is the incomplete elliptic integral of the second kind,
\[
	E(\phi,e)=\int_{0}^{\phi}\sqrt{1-e^2\sin^2\tau}\,d\tau .
\]
In particular, the complete elliptic integral of the second kind becomes
$\cE(K(e),e)=E(\pi/2,e)$.
We first prove the following auxiliary result. % that we will need below.
%%%%%
\begin{lem} \label{lem:EpsilonInequality}
Let $z,e\in(0,1)$ and define
 \(
	g(z) \equiv \cE( z\,K(e),e)-z\,\cE( K(e),e).
\)
Then $g(z)>0$.
\end{lem}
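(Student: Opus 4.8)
The plan is to show that $g\colon[0,1]\to\bR$ is strictly concave and vanishes at both endpoints $z=0$ and $z=1$; strict positivity on the open interval is then an immediate consequence of concavity, so the bulk of the work is a short computation of derivatives.

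First I would differentiate $g$ twice. Since $\cE(u,e)=\int_0^u\dn^2(\tau,e)\,d\tau$, the chain rule gives
\[
	g'(z)=K(e)\,\dn^2\!\big(z K(e),e\big)-\cE\big(K(e),e\big),
\]
and differentiating once more, using the standard identity $\tfrac{d}{du}\dn(u,e)=-e^2\sn(u,e)\cn(u,e)$ \cite{Byrd1971},
\[
	g''(z)=-2e^2 K(e)^2\,\sn\!\big(z K(e),e\big)\,\cn\!\big(z K(e),e\big)\,\dn\!\big(z K(e),e\big).
\]
For $z\in(0,1)$ we have $z K(e)\in\big(0,K(e)\big)$, where $\am(\cdot,e)$ lies in $(0,\tfrac{\pi}{2})$, so that $\sn$, $\cn$, and $\dn$ are all strictly positive there; together with $e,K(e)>0$ this forces $g''(z)<0$ on $(0,1)$, i.e.\ $g$ is strictly concave on $[0,1]$.

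Next I would record the endpoint values $g(0)=\cE(0,e)=0$ and $g(1)=\cE\big(K(e),e\big)-\cE\big(K(e),e\big)=0$. A strictly concave function on $[0,1]$ that vanishes at $0$ and $1$ is strictly positive on $(0,1)$: writing $z=(1-z)\cdot 0+z\cdot 1$ and invoking strict concavity gives $g(z)>(1-z)g(0)+z\,g(1)=0$. This yields $g(z)>0$ and completes the argument.

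I do not expect a genuine obstacle here; the only steps needing a moment's care are the derivative formula for $\dn$ and the positivity of the three Jacobi functions on $\big(0,K(e)\big)$, both standard facts about elliptic functions. An equivalent route, if one prefers to avoid second derivatives, is to substitute $\tau=sK(e)$ so that $g(z)=K(e)\big(\int_0^z h(s)\,ds-z\int_0^1 h(s)\,ds\big)$ with $h(s)=\dn^2(sK(e),e)$ strictly decreasing on $[0,1]$; the bracketed expression has derivative $h(z)-\int_0^1 h$, which is strictly decreasing and hence changes sign exactly once, so the expression rises from $0$ and returns to $0$, staying positive in between.
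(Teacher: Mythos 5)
Your proof is correct and follows essentially the same route as the paper's: compute $g''$ using the derivative of $\dn$, observe it is negative on $(0,1)$ because $\sn,\cn,\dn$ are all positive on $(0,K(e))$, and conclude from strict concavity and the vanishing of $g$ at both endpoints. The extra detail you give (the explicit formula for $g'$ and the alternative first-derivative argument) is fine but not needed.
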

%%%%%%

\begin{proof}
The second derivative of $g$ is
\[
	 g''(z)=-2e^2 K^2\left(e\right)\dn \left(z K\left(e\right),e\right)
	 	\cn \left(z K\left(e\right),e\right) \sn \left(z K\left(e\right),e\right).
\]
Since $z\in(0,1)$ then $g''(z)<0$, so $g$ is strictly concave on $(0,1)$.
Moreover, given that $g(0)=g(1)=0$, then $g(z)>0$, for all $z\in(0,1)$.
\end{proof}

To prove the next lemma, we will use derivatives of the elliptic functions and integrals with respect to
the argument and modulus, as found in \cite[Formulas 710.00, 710.61, 731.11]{Byrd1971}:
\bsplit{BasicElliptic}
	\frac{\partial}{\partial e}K(e) &= \frac{1}{e\,(1-e^2)}\left[\cE(K(e),e)-(1-e^2)K(e)\right],\\
	\frac{\partial}{\partial e}\cd(t,e)&= \frac{\,\sn(t,e)}{e\,\dn^2(t,e)}\left[\cE(t,e)-(1-e^2)t\right],\\
	\frac{\partial}{\partial t}\cd(t,e)&= - (1-e^2) \frac{\sn(t,e)}{\dn^2(t,e)}.
\esplit

%%%%%%
\begin{lem}\label{lem:paramgrowth} Defining $f(\ell,e)$ as in \Eq{poristic},
then $\displaystyle\frac{\partial f}{\partial \ell}<0$ and $\displaystyle\frac{\partial f}{\partial e}>0$.
\end{lem}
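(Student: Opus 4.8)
The claim is that $f(\ell,e) = e\,\cd(2K(e)\ell,e)$ (for $\ell\in(0,1/2)$, $0<e<1$) is strictly decreasing in $\ell$ and strictly increasing in $e$. The plan is to differentiate the explicit formula \Eq{poristic} using the derivative formulas \Eq{BasicElliptic}, and then reduce each sign claim to a single clean inequality, the key one being \Lem{EpsilonInequality}. Throughout, set $u = 2K(e)\ell$, so $0 < u < K(e)$; then $\sn(u,e), \cn(u,e), \dn(u,e)$ are all strictly positive.

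For the $\ell$-derivative, only the argument of $\cd$ depends on $\ell$: writing $f = e\,\cd(u,e)$ with $u = 2K(e)\ell$, the chain rule and the third line of \Eq{BasicElliptic} give
\[
	\frac{\partial f}{\partial \ell} = e\cdot\frac{\partial}{\partial t}\cd(t,e)\Big|_{t=u}\cdot 2K(e)
	 = -\,2eK(e)(1-e^2)\,\frac{\sn(u,e)}{\dn^2(u,e)}.
\]
Every factor on the right is positive (using $0<e<1$ and $\sn(u,e)>0$), so $\partial f/\partial\ell < 0$. This step is immediate and needs no auxiliary lemma.

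For the $e$-derivative the computation is longer because $e$ enters $f$ in three places: the prefactor $e$, the modulus of $\cd$, and the argument $u=2K(e)\ell$ through $K(e)$. Applying $\partial/\partial e$ to $f = e\,\cd(2K(e)\ell,e)$ and collecting the three contributions via the first two lines of \Eq{BasicElliptic} and the third line for the $\partial u/\partial e = 2\ell K'(e)$ term, I expect everything to combine so that, after clearing the common positive factor $\sn(u,e)/(e\,\dn^2(u,e))$ (and a positive power of $e$), the sign of $\partial f/\partial e$ equals the sign of an expression of the form
\[
	\cE(u,e) - \frac{u}{K(e)}\,\cE(K(e),e),
\]
i.e. exactly $g(\ell)$ in the notation of \Lem{EpsilonInequality} with $z = u/K(e) = 2\ell \in (0,1)$. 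By \Lem{EpsilonInequality}, $g(2\ell) > 0$, hence $\partial f/\partial e > 0$.

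The main obstacle is the bookkeeping in the $e$-derivative: verifying that the three terms genuinely collapse to the stated multiple of $g(2\ell)$, including checking that the $\cd^2$ terms and the $(1-e^2)$ factors cancel as claimed and that no sign is lost when dividing by $\sn(u,e)/\dn^2(u,e)$ (legitimate since $0<u<K(e)$ forces $\sn(u,e)>0$). Once that identity is in hand, the monotonicity follows purely from \Lem{EpsilonInequality}, which was proved above by concavity. I would present the $\ell$-derivative in full (it is short) and then carry out the $e$-derivative reduction, flagging \Lem{EpsilonInequality} as the crucial input.
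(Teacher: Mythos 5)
Your $\ell$-derivative computation is exactly the paper's, and your overall strategy (differentiate via \Eq{BasicElliptic} and invoke \Lem{EpsilonInequality}) is the same. However, your anticipated simplification of the $e$-derivative is slightly too clean and would not survive the bookkeeping you are rightly wary of. When you differentiate $f=e\,\cd(2K(e)\ell,e)$ with respect to $e$, there are three contributions: (i) the product rule hits the prefactor $e$ and produces a bare $\cd(u,e)$ term; (ii) the chain rule through the argument $u=2K(e)\ell$ produces $-(1-e^2)\frac{\sn}{\dn^2}\cdot 2\ell K'(e)$; (iii) the modulus derivative of $\cd$ produces $\frac{\sn}{e\dn^2}[\cE(u,e)-(1-e^2)u]$. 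The $(1-e^2)$ pieces from (ii) and (iii) cancel (because $(1-e^2)u=2\ell(1-e^2)K(e)$), and what remains is
\[
\frac{\partial f}{\partial e}=\cd(u,e)+\frac{\sn(u,e)}{\dn^2(u,e)}\bigl[\cE(u,e)-2\ell\,\cE(K(e),e)\bigr],
\]
not a single positive multiple of $g(2\ell)$. The $\cd(u,e)$ from (i) does not carry the common factor $\sn/\dn^2$, so you cannot ``clear'' it; it survives as a separate additive summand. The conclusion is unharmed: for $\ell\in(0,1/2)$ one has $\cd(u,e)>0$ and, by \Lem{EpsilonInequality}, the bracket is $g(2\ell)>0$, so $\partial f/\partial e$ is a sum of two positive terms. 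You should present it that way rather than asserting the whole expression collapses to the sign of $g(2\ell)$; otherwise the plan is sound and matches the paper.
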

%%%%%%

\begin{proof}
To clarify the notation let $R:(0,1/2)\times(0,1)\to \bR$ be the function
\beq{FunctionR}
 	R(\ell,e)= e\,\cd(2 K(e)\ell,e),
\eeq
given in \Eq{poristic}. Using \Eq{BasicElliptic}, we find
\[
 	\frac{\partial R}{\partial \ell}=-2e(1-e^2) \frac{\sn(2 K(e)\ell,e)}{\dn^2(2 K(e)\ell,e)}\,K(e),
 \]
 \[
 \frac{\partial R}{\partial e}=\cd(2 K(e)\ell,e)+
 \frac{\,\sn(2 K(e) \ell,e)}{\dn^2(2 K(e) \ell,e)}\big[\cE(2 K(e) \ell,e)-2 \ell\,\cE( K(e),e)\big].
 \]
 If $\ell\in(0,1/2)$, then $\cd$, $\sn$, and $\dn$ are positive with the arguments above.
 Using \Lem{EpsilonInequality}, we find that $\cE(2 K(e) \ell,e)-2 \ell\,\cE( K(e),e)>0$.
 Since \Eq{poristic} implies that $f = R(\ell,e)$, we conclude that $\displaystyle\frac{\partial f }{\partial \ell}<0$ and $\displaystyle\frac{\partial f}{\partial e}>0$.
\end{proof}

%%%%%%
\begin{lem}\label{lem:rotgrowth}
 The billiard rotation number $\rhoin(e,f)$ given by \Eq{BilliardRotationNumber} satisfies
 $\displaystyle\frac{\partial \rhoin}{\partial e}>0$ and $\displaystyle\frac{\partial \rhoin}{\partial f}<0$.
\end{lem}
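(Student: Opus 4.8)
The plan is to avoid differentiating the explicit formula \Eq{BilliardRotationNumber} directly --- which would force us to control the sign of an unwieldy combination of complete and incomplete elliptic integrals --- and instead to exploit the implicit relation furnished by \Lem{porism} together with the monotonicity of the function $R$ already established in \Lem{paramgrowth}. The main (and really the only) obstacle is the mild regularity bookkeeping needed to differentiate an implicit identity; once that is in place the conclusion is immediate.

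First I would record two preliminary facts. On $\Delta$ the rotation number takes values $\rhoin(e,f)\in(0,\tfrac12)$: indeed $0<\omegain(e,f)<\tfrac{\pi}{2}$ forces $0<F(\omegain(e,f),e)<K(e)$ in \Eq{BilliardRotationNumber}. Moreover $\rhoin$ is a smooth function of $(e,f)$ on $\Delta$, since $\omegain$ is smooth there (the radicand in its definition lies strictly between $0$ and $1$) and $F$ and $K$ are smooth in their arguments. Likewise $R(\ell,e)=e\,\cd(2K(e)\ell,e)$, the function of \Eq{FunctionR}, is smooth on $(0,\tfrac12)\times(0,1)$ with $\partial R/\partial\ell\neq 0$ by \Lem{paramgrowth}, so the implicit function theorem applies wherever needed.

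Next, set $\ell=\rhoin(e,f)$. Since $\ell\in(0,\tfrac12)$, \Lem{porism} gives the identity
\[
	f = R\big(\rhoin(e,f),e\big), \qquad (e,f)\in\Delta .
\]
Differentiating with respect to $f$ with $e$ held fixed yields
\[
	1 = \frac{\partial R}{\partial \ell}\,\frac{\partial \rhoin}{\partial f},
\]
and differentiating with respect to $e$ with $f$ held fixed yields
\[
	0 = \frac{\partial R}{\partial \ell}\,\frac{\partial \rhoin}{\partial e} + \frac{\partial R}{\partial e}.
\]

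Finally I would invoke \Lem{paramgrowth}, which gives $\partial R/\partial\ell<0$ and $\partial R/\partial e>0$ throughout $(0,\tfrac12)\times(0,1)$. The first displayed equation then gives $\partial \rhoin/\partial f = 1/(\partial R/\partial\ell) < 0$, and the second gives $\partial \rhoin/\partial e = -(\partial R/\partial e)/(\partial R/\partial\ell) > 0$ (a positive quantity divided by a negative one, negated). These are precisely the asserted inequalities, completing the proof.
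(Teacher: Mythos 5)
Your proof is correct and follows essentially the same route as the paper: both start from the implicit identity $R(\rhoin(e,f),e)=f$ obtained by combining \Eq{FunctionR} with \Lem{porism}, differentiate it in $e$ and in $f$, and conclude via the sign information $\partial R/\partial\ell<0$, $\partial R/\partial e>0$ from \Lem{paramgrowth}. The only difference is that you explicitly record the smoothness and the $\rhoin\in(0,\tfrac12)$ bookkeeping that the paper leaves implicit.
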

%%%%%%

\begin{proof}
 Using \Eq{FunctionR} together with \Eq{poristic} gives the implicit expression
 \[
	R(\rhoin(e,f),e)=f.
 \]
 This implies that
\[
 \frac{\partial R}{\partial \ell}\cdot\frac{\partial \rhoin}{\partial e}+\frac{\partial R}{\partial e}=0, \qquad
 \frac{\partial R}{\partial \ell}\cdot\frac{\partial \rhoin}{\partial f}=1.
 \]
Since $\displaystyle\frac{\partial R}{\partial \ell}<0$ and $\displaystyle\frac{\partial R}{\partial e}>0$,
this implies the result.
\end{proof}

\bibliography{biblio}{}
\bibliographystyle{alpha}
\end{document}